\newtheorem{theorem}{Theorem}[section]
\newtheorem{lemma}[theorem]{Lemma}
\newtheorem{proposition}[theorem]{Proposition}
\newtheorem{remark}[theorem]{Remark}
\theoremstyle{definition}
\newtheorem{definition}[theorem]{Definition}
\numberwithin{equation}{section}
\newcommand{\eps}{\varepsilon}
\begin{document}

\title{The growth of eigenfunction extrema on p.c.f. fractals}

\author [Hua Qiu and Haoran Tian]{Hua Qiu and Haoran Tian}

\address{School of Mathematics, Nanjing University,
Nanjing, Jiangsu, 210093, P.R. China} \email{huaqiu@nju.edu.cn}

\address{School of Mathematics, Nanjing University,
Nanjing, Jiangsu, 210093, P.R. China} \email{hrtian@hotmail.com}

\subjclass[2000]{Primary: 28A80}

\keywords {Sierpinski gasket, fractal Laplacian, eigenfunctions, extremum counting function}

\thanks {Hua Qiu was supported by the National Natural Science Foundation of China (Grant No. 12471087 and 12531004).}

\begin{abstract}
This paper studies the growth of local extrema of Laplacian eigenfunctions on post-critically finite (p.c.f.) fractals. We establish the sharp two-sided estimate $\#\mathrm{Extr}(u_\lambda)\asymp\lambda^{d_S/2}$ for the Sierpinski gasket, demonstrating that the complexity of eigenfunctions is governed by the spectral dimension $d_S$.  This behavior stands in sharp contrast to the corresponding growth law on Euclidean $n$-dimensional rectangles or balls. The attainment of the exponent $d_S/2$ reflects the high symmetry of the underlying fractal. Our result reveals a distinct spectral-geometric phenomenon on singular spaces.

\end{abstract}

\maketitle

\section{Introduction}\label{sec1}
The study of Laplacians on fractals and their spectral properties constitutes a cornerstone of analysis on fractals, a field largely pioneered by Kigami \cite{K2,K4}. Unlike their Euclidean counterparts, Laplacians on fractals exhibit a wealth of novel and unexpected phenomena, leading to a rich and distinct spectral theory \cite{FS, Ka1,Ka2,K3,KL, Sh, Sh2, T}. Among these phenomena, the intricate oscillatory behavior of eigenfunctions — such as the distribution and growth of their local extrema — remains a topic of considerable interest.

In the classical setting of a smooth compact Riemannian manifold (or a bounded Euclidean domain with suitable boundary conditions), the celebrated Courant nodal domain theorem \cite{CH} provides a fundamental upper bound on the number of nodal domains of an eigenfunction (where a nodal domain means a maximal connected region on which the eigenfunction does not change sign), which, in a nonrigorous sense, hints at a heuristic upper bound for the number of local extrema. Moreover, the Hörmander-type estimates \cite{Ho} show that the $L^\infty$-norm of an eigenfunction grows at most polynomially with the eigenvalue. These results give a picture of eigenfunctions whose geometric complexity increases in a controlled and predictable manner as the energy (eigenvalue) increases.

The landscape on fractals is quite different. The existence of pre-localized eigenfunctions — eigenfunctions that vanish identically on the boundary along with their normal derivatives — is a hallmark of fractals \cite{K3,K4}. Such eigenfunctions can be highly localized and give rise to  a cascade of new eigenfunctions through a localization process. This structure fundamentally alters the asymptotic distribution of eigenvalues and the qualitative behavior of eigenfunctions. Consequently, classical tools and intuition from elliptic PDEs often fail, necessitating new frameworks for understanding the fine properties of eigenfunctions on fractals. 

\begin{figure}[h]
\begin{center}
\includegraphics[width=5.5cm]{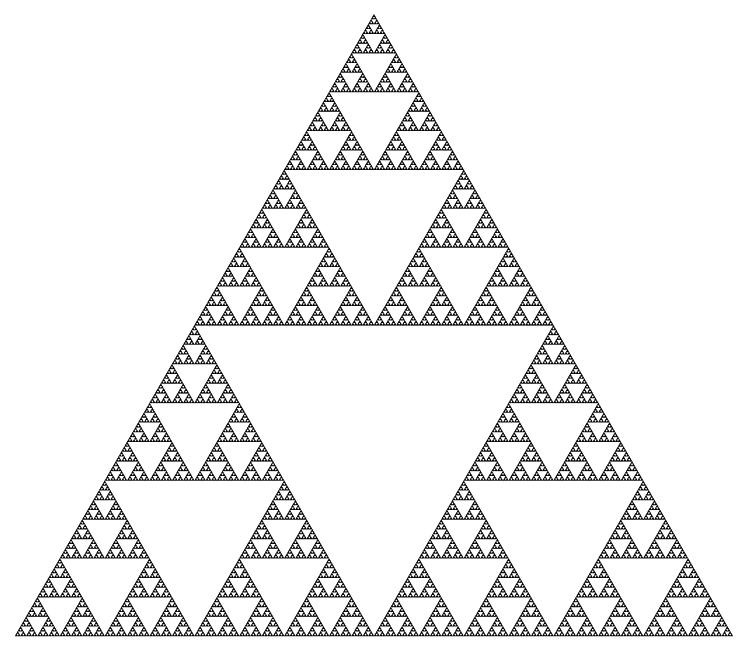}
\caption{The Sierpinski gasket $\mathcal{SG}$.}
\label{figure1}
\end{center}
\end{figure}

This paper is devoted to a study of the oscillatory behavior of eigenfunctions on post-critically finite (p.c.f.) self-similar sets, among which the Sierpinski gasket ($\mathcal{SG}$, see Figure \ref{figure1}) serves as a typical example. In \cite{DSV}, Dalrymple, Strichartz and Vinson performed computational simulations of some eigenfunctions on the $\mathcal{SG}$, illustrating the oscillation of the functions and the distribution of nodal sets, which has inspired this work. In particular, the numerical data in \cite{DSV} suggested a certain pattern for the number of local extrema of eigenfunctions restricted to edges in $\mathcal{SG}$, which was confirmed by the authors in a previous work \cite{QT}.

We introduce and investigate the Dirichlet (or Neumann) extremum counting function 
$\Gamma_D(x)$ (or $\Gamma_N(x)$) for the Laplacian $-\Delta_\mu$, where $\mu$ is the underlying reference measure. For an eigenfunction $u_\lambda$ with eigenvalue $\lambda$, the count 
$\#\mathrm{Extr}(u_\lambda)$ enumerates its distinct local maximum and minimum sets (formally defined in Definition \ref{def1}). The function $\Gamma_D(x)$ (or $\Gamma_N(x)$) then captures the maximum of this number over all Dirichlet (or Neumann) eigenfunctions with eigenvalues up to $x$:
\begin{equation*}
\Gamma_D(x)=\sup\{\#\mathrm{Extr}(u_\lambda): u_\lambda \text{ is a }\lambda \text{-Dirichlet eigenfunction with } 0\leq \lambda\leq x\}.
\end{equation*}
$\Gamma_N(x)$ is defined similarly.

Our primary goal is to establish the asymptotic growth rate of $\Gamma(x)$ (either $\Gamma_D(x)$ or $\Gamma_N(x)$, as they exhibit the same behavior) as $x\to \infty$, linking it directly to the spectral dimension $d_S$ of the fractal, which characterizes a sharp phase transition in the behavior of $\Gamma(x)$.

For the upper bound, we prove in Proposition \ref{prothm1} that under a natural condition (A) — namely, that eigenfunctions with sufficiently small eigenvalues possess at most one extreme set — the growth of $\Gamma(x)$ is at most of the order $x^{d_S/2}$, i.e.
\begin{equation*}
\limsup_{x\to\infty} \frac{\Gamma(x)}{x^{d_S/2}}<\infty.
\end{equation*}
Condition (A) can be interpreted as a form of ``low-energy simplicity''. It is not universal for all fractals, as evidenced by the modified Koch curve \cite{M,Sh2}, where even low-energy eigenfunctions can have infinitely many extrema. Verifying its validity is therefore a key step in the analysis for a given fractal.

Conversely, for the lower bound, Proposition \ref{prothm2} shows that the existence of a pre-localized eigenfunction forces $\Gamma(x)$ to grow at least polynomially. In the lattice case, this lower bound is sharp, matching the upper bound:
\begin{equation*}
\liminf_{x\to\infty}\frac{\Gamma(x)}{x^{d_S/2}}>0.
\end{equation*}
The proof constructs sums of copies of a pre-localized eigenfunction over appropriately chosen cells — an inherently fractal method with no direct analogue in the smooth setting.

Our main result is the following theorem, which gives a precise, uniform, two-sided estimate of $\#\mathrm{Extr}(u_\lambda)$ for eigenfunctions on the Sierpinski gasket $\mathcal{SG}$.
\begin{theorem}\label{thm1}
There exists a constant $C>1$ such that
\begin{equation}\label{equnthm1}
C^{-1}\lambda^{d_S/2}\leq\#\mathrm{Extr}(u_\lambda)\leq C\lambda^{d_S/2}
\end{equation}
holds for any global Dirichlet or Neumann eigenfunction $u_\lambda$ on $\mathcal{SG}$ with  eigenvalue $\lambda$, except for the first non-constant Neumann eigenfunction where $\#\mathrm{Extr}(u_\lambda)=0$.
\end{theorem}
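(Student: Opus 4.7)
The plan is to combine the upper bound of Proposition \ref{prothm1} with the lower bound of Proposition \ref{prothm2}, specialized to $\mathcal{SG}$, while strengthening each inequality from a statement about the supremum $N(x)$ to one that holds for every individual global eigenfunction. Two ingredients specific to $\mathcal{SG}$ make this possible: the spectral decimation algorithm of Fukushima--Shima, which provides an explicit description of every Dirichlet and Neumann eigenpair, and the existence of pre-localized eigenfunctions of Barlow--Kigami type on $\mathcal{SG}$.

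For the upper half of \eqref{equnthm1}, the first task is to verify Condition (A) for $\mathcal{SG}$: every eigenfunction whose eigenvalue lies below a fixed threshold $\lambda_0$ must possess at most one local maximum set and at most one local minimum set. I would do this by explicit enumeration. Using spectral decimation, there are only finitely many Dirichlet and Neumann eigenvalues below any prescribed cut-off, and the $D_3$ symmetry of $\mathcal{SG}$ decomposes each corresponding eigenspace into irreducible pieces of low dimension whose extrema can be read off directly from the decimation formulas (for the very first eigenfunctions one can even compute the values at all vertices of $V_1$ or $V_2$ by hand). Once Condition (A) is confirmed, Proposition \ref{prothm1} delivers the bound $N(u)\leq C\lambda^{d_S/2}$.

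For the lower half, the key structural fact is that $\mathcal{SG}$ is in the lattice case and admits genuine pre-localized eigenfunctions supported on arbitrary cells, so Proposition \ref{prothm2} already yields $\liminf_{x\to\infty}N(x)/x^{d_S/2}>0$. To upgrade this to the pointwise bound $N(u)\geq C^{-1}\lambda^{d_S/2}$ needed in the theorem, I plan to exploit the rigidity of the decimation procedure itself. Each eigenvalue of generation $n$ is produced from a generation-$0$ or generation-$1$ "seed" eigenvalue, and every eigenfunction with that eigenvalue is forced to be a sum of scaled copies of the corresponding seed eigenfunctions distributed over the $3^n$ cells of level $n$. Since $3^n\asymp\lambda^{d_S/2}$ by the Weyl-type asymptotics of Kigami--Lapidus, a counting argument on cells that support a non-trivial copy produces the required number of distinct extremum sets.

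The main obstacle will be this last non-merging step: one must rule out the possibility that extrema from adjacent cells collapse into a single common extremum set. The delicate case is when the decimation procedure hits one of the finitely many "forbidden" or "exceptional" eigenvalues, where extra symmetry can cause the seed eigenfunction to vanish on boundary vertices and two copies to glue smoothly. Handling these exceptional generations separately — with a direct verification using the explicit eigenfunctions — is the technical heart of the argument and is also what forces the isolated exception in the theorem: the first non-constant Neumann eigenfunction is antisymmetric with its maximum and minimum located at boundary vertices of $\mathcal{SG}$, so under the convention of Definition \ref{def1} it contributes $N(u)=0$.
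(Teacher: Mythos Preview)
Your approach to the upper bound contains a genuine gap. Condition (A) is not a statement about Dirichlet or Neumann eigenfunctions; it requires that \emph{every} $\lambda$-eigenfunction with $0\le\lambda<\lambda_0$ satisfy $N(u)\le 1$. This is essential, because the proof of Proposition \ref{prothm1} applies Condition (A) not to $u$ itself but to the restrictions $u\circ F_w$, and these are general elements of $E(\gamma_w^2\lambda)$ with arbitrary boundary values, not Dirichlet or Neumann eigenfunctions. For each small $\lambda>0$ the space $E(\lambda)$ is three-dimensional and $\lambda$ ranges over a continuum, so there is no finite enumeration to perform. The paper's verification of (A) (Theorem \ref{thm4}) is accordingly much more elaborate: it parametrises $E(\lambda)$ by $\mathbb R\mathrm P^2$ via boundary data, computes explicitly the region $\mathcal A_\lambda$ of boundary values whose eigenfunction has an extremum, shows it is an open triangle, and checks the disjoint decomposition \eqref{eq2} of Proposition \ref{pro2}. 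This analysis, carried out in Section \ref{sec4} via Lemmas \ref{pro41}--\ref{lm4.8} and Propositions \ref{lem44}, \ref{lemnece}, is the technical core of the paper and cannot be replaced by inspecting a few low-lying eigenfunctions.

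Your lower-bound strategy is closer in spirit to the paper's, but the ``non-merging'' step you flag is not really the issue, and the description of eigenfunctions as sums of scaled copies of a seed is too loose to yield a per-cell extremum. What the paper actually proves (Theorem \ref{thm5} via Lemma \ref{lmpuzzle}) is that for any eigenfunction of the form $u^\eps$ with $|\eps|\ge 1$, every level-$(n-1)$ cell contributes at least one extremum; this uses the projective-geometric description of $\mathcal D_\alpha$ and the explicit inverse images $(\mathcal P_\alpha^i)^{-1}(\mathcal D_\alpha)$ for $\alpha\in(3,5)$ to show that $\mathbb R\mathrm P^2\setminus\{\boldsymbol\theta\}$ is covered by the regions that force an extremum. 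The global eigenfunction is then decomposed as $u\circ F_w$ over $w\in W_m$, and the base cases $\eps=\varnothing$ (including the exceptional (N6') case giving $N(u)=0$) are handled separately in Lemmas \ref{lm61} and \ref{lm62}. Your proposal does not supply any mechanism of this kind.
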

The result demonstrates the highly regular behavior of  $\mathcal{SG}$, where eigenfunction complexity — quantified by the count of local extrema — grows  precisely as the  power law  $d_S/2$. The proof relies on a detailed analysis of the eigenfunction decimation, originally developed by Rammal and Toulouse \cite{RT} and later rigorously established by Shima and Fukushima \cite{Sh, FS}.  

In the classical setting, no rigorous upper bounds are known for the number of critical points, except in the separation-of-variables case \cite[page 10]{Z2}, such as 
$n$-dimensional rectangles or balls—where eigenfunctions may oscillate at high frequencies independently along distinct spatial directions—for which the number of critical points can grow like 
$\lambda^{n/2}$. It is believed \cite[Section 4.2]{Z1} that $\lambda^{n/2}$ is the optimal order, yet even for real analytic Riemannian manifolds, no rigorous results in this
direction are known. On the other hand, no lower bound on the number of critical points exists: Jakobson and Nadirashvili \cite{JN} constructed a Riemannian surface possessing a sequence of eigenfunctions with a fixed finite number of
critical points, thereby negatively answering a question of Yau \cite{Y}, who asked whether the number grows as the eigenvalue increases.

On fractals, as shown  in Theorem \ref{thm1} and Propositions \ref{prothm1} and \ref{prothm2}, the growth is governed by the spectral dimension $d_S$, which takes the role played by the geometric Hausdorff dimension $n$ in the Euclidean setting. The observed order $d_S/2$ for $\mathcal{SG}$ suggests that its high symmetry is a key factor in realizing this maximal growth rate. We therefore conjecture that for a broader class of p.c.f. self-similar sets, the growth order of local extrema is at most $d_S/2$, with this upper bound being attained only in highly symmetric cases  — such as nested fractals, or even beyond the p.c.f. setting, as exemplified by the  Sierpinski carpet — and strictly smaller in the presence of lower symmetry. 

\subsection{Notation and Propositions \ref{prothm1} and \ref{prothm2}}\label{subsec11}
Before ending this section, let us introduce the exact definition of $\#\mathrm{Extr}(u)$ and $\Gamma(x)$ on a p.c.f. self-similar set $K$, which is always assumed to be connected.

Let $V_0$ denote the boundary of $K$, and $\mathscr{D}_\mu$ denote the domain of $\Delta_\mu$. For a function $u\in \mathscr{D}_\mu$ and $p\in V_0$, denote by $(du)_p$ the normal derivative of $u$ at $p$ (see the exact meaning in Section \ref{subsec21}).  
For $\lambda\geq 0$, call a non-trivial  function $u\in\mathscr{D}_\mu$ satisfying $-\Delta_\mu u=\lambda u$ on $K\setminus V_0$ a {\it $\lambda$-eigenfunction} of $-\Delta_\mu$.  An eigenfunction $u$ is said to be a \textit{Dirichlet (or Neumann) eigenfunction} if $u|_{V_0}=0$ (or $du|_{V_0}=0$). In particular, $u$ is called a \textit{pre-localized eigenfunction} if both $u|_{V_0}=0$ and $du|_{V_0}=0$ hold; and a \textit{global eigenfunction} if $\mathrm{supp}\,u=K$.

\begin{definition}\label{def1}

(a). 
Let $u\in\mathscr{D}_\mu$. If there exist non-empty $A\subset K$ and $c\in\mathbb{R}$ such that

(a-1). $A$ is a connected component of $u^{-1}(c)$;

(a-2). $A\cap V_0=\emptyset$;

(a-3). there exists $\delta>0$ such that $u(p)\leq c$ (resp. $u(p)\geq c$) for any $ p\in A_\delta$, where
$A_\delta$ is the $\delta$-neighborhood of $A$,

\noindent then we say that $u$ has a \textit{local maximum ({\rm resp.} minimum)} in $A$ with value $c$. Such a set $A$ will be called a \textit{(local) extreme set} of $u$.

(b). For $u\in\mathscr{D}_\mu$, denote
\begin{equation*}
\mathrm{Extr}(u)=\{A\subset K: A \text{ is an extreme set of } u\text{ for some }c\}.
\end{equation*}
For $x>0$, define
\begin{equation*}
\Gamma_D(x)=\sup\{\#\mathrm{Extr}(u_\lambda): u_\lambda \text { is a $\lambda$-Dirichlet eigenfunction with}\ 0\leq \lambda\leq x\},
\end{equation*}
and call it \textit{the Dirichlet extremum counting function} of $-\Delta_\mu$. \textit{The Neumann extremum counting function} $\Gamma_N(x)$ is defined similarly.
\end{definition}

\noindent{\bf Remark.}
For a $\lambda$-eigenfunction with $\lambda>0$, the values of its local maxima (resp. minima) are positive (resp. negative).  
Indeed, suppose that $u$ has a local maximum in $A$ with value $c\leq 0$. Since $u$ is a $\lambda$-eigenfunction, there exists $\delta>0$ such that $u\leq c\leq 0$ and $\Delta_\mu u=-\lambda u\geq 0$ on $A_\delta$. Moreover, $u$ is non-constant on $A_\delta$. Let $h$ be the harmonic function satisfying $h|_{\partial A_\delta}=u|_{\partial A_\delta}$, and denote by $g(\cdot,\cdot)$ the Green function for $-\Delta_\mu$ on $A_\delta$. It is known that $g(p,q)>0$ for all $p,q\in A_\delta$. Then, for $p\in A$, we have
\begin{equation*}
u(p)=\int_{A_\delta}g(p,q)\lambda u(q)\mathrm{d}\mu(q)+h(p)<h(p)\leq \max u|_{\partial A_{\delta}},
\end{equation*}
which yields a contradiction.

We have certain asymptotic estimates of $\Gamma(x)$ (either $\Gamma_D(x)$ or $\Gamma_N(x)$) as $x\to \infty$. 

\noindent{\bf(A)}: there exists $\lambda_0>0$ such that $\#\mathrm{Extr}(u_\lambda)\leq 1$ for any $\lambda$-eigenfunction $u_\lambda$ with $0\leq\lambda<\lambda_0$.

\begin{proposition}\label{prothm1}
If condition (A) holds, then
\begin{equation*}
\limsup_{x\to\infty}\frac{\Gamma(x)}{x^{d_S/2}}< \infty.
\end{equation*}
\end{proposition}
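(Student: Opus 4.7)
The plan is to exploit the self-similar scaling of the Laplacian to reduce the problem to the low-eigenvalue regime, where condition (A) takes control. For a word $w=w_1\cdots w_m$, write $r_w=\prod_k r_{w_k}$ and $\mu_w=\prod_k \mu_{w_k}$ for the resistance and measure weights; the scaling identity
\[
\Delta_\mu(u\circ F_w)=r_w\mu_w\,(\Delta_\mu u)\circ F_w
\]
shows that if $u$ is a $\lambda$-eigenfunction on $K$ then $u\circ F_w$ is a $(\lambda r_w\mu_w)$-eigenfunction on $K$. Given $u$ with $\lambda$ large, I would choose the non-uniform partition
\[
\Lambda_\eta=\{w:\ r_w\mu_w\leq\eta<r_{w'}\mu_{w'}\},
\]
where $w'$ is the parent of $w$ and $\eta=\tfrac{1}{2}\lambda_0/\lambda$. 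For every $w\in\Lambda_\eta$ the pullback $u\circ F_w$ is then an eigenfunction with eigenvalue $<\lambda_0$, so by (A) it has at most one extreme set on $K$.

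Next I would classify the extreme sets of $u$ by whether they meet the finite junction set $V_{\Lambda_\eta}=\bigcup_{w\in\Lambda_\eta} F_w(V_0)$. If $A\cap V_{\Lambda_\eta}=\emptyset$, then by the p.c.f. property the cells $\{K_w\setminus V_{\Lambda_\eta}\}_{w\in\Lambda_\eta}$ form a clopen partition of $K\setminus V_{\Lambda_\eta}$, so connectedness forces $A\subset\mathrm{int}_K(K_w)$ for a unique $w\in\Lambda_\eta$; direct verification from Definition \ref{def1} shows that $F_w^{-1}(A)$ is then an extreme set of $u\circ F_w$ on $K$, and (A) contributes at most $|\Lambda_\eta|$ such $A$. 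On the other hand, an extreme set meeting $V_{\Lambda_\eta}$ must contain a point of $V_{\Lambda_\eta}\setminus V_0$ (since $A\cap V_0=\emptyset$), and since distinct extreme sets are disjoint, this accounts for at most $|V_{\Lambda_\eta}\setminus V_0|\leq |V_0|\,|\Lambda_\eta|$ further sets. Altogether $N(u)\leq (1+|V_0|)\,|\Lambda_\eta|$.

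The final step is to bound $|\Lambda_\eta|$. The spectral exponent $d_S$ is characterized by $\sum_{i=1}^N (r_i\mu_i)^{d_S/2}=1$, and iterating this identity along $\Lambda_\eta$ yields $\sum_{w\in\Lambda_\eta}(r_w\mu_w)^{d_S/2}=1$. Combined with the minimality lower bound $r_w\mu_w\geq\eta\cdot\min_i(r_i\mu_i)$ for $w\in\Lambda_\eta$, this gives $|\Lambda_\eta|\leq C\eta^{-d_S/2}$ by the standard renewal estimate underlying the Kigami--Lapidus Weyl asymptotics. Substituting $\eta\asymp\lambda_0/\lambda$ yields $N(u)\leq C'\lambda^{d_S/2}$ uniformly in $u$, from which the proposition follows upon taking the supremum and the $\limsup$.

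The main technical subtlety I anticipate is verifying that $F_w^{-1}(A)$ is genuinely an extreme set of $u\circ F_w$ on $K$ in the sense of Definition \ref{def1}: one must check both (a-1), that $F_w^{-1}(A)$ remains a connected component of the level set of $u\circ F_w$ in all of $K$ (not merely in $K_w$), and (a-3), that the one-sided inequality survives on some neighborhood. Both follow from the observation that, since $A$ is compact and disjoint from the finite set $V_{\Lambda_\eta}\supset \partial_K K_w$, a sufficiently small neighborhood $A_\delta$ already lies entirely in $\mathrm{int}_K(K_w)$, ruling out any pathology from the cell boundary. Once this correspondence is cleanly established, the rest is a combination of the self-similar scaling of $\Delta_\mu$ with the standard renewal counting for the partition $\Lambda_\eta$.
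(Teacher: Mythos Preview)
Your proposal is correct and follows essentially the same approach as the paper: the paper defines the partition $\Theta(x)=\{w:\gamma_{w'}^2 x\ge 1>\gamma_w^2 x\}$ (which, since $\gamma_w^2=r_w\mu_w$, coincides with your $\Lambda_\eta$ for $\eta=1/x$), applies the same self-similar scaling to push each $u\circ F_w$ into the regime $\lambda<\lambda_0$, and uses Lemma~\ref{pro1}(b)---exactly your ``meets $V_{\Lambda_\eta}$ / doesn't meet $V_{\Lambda_\eta}$'' dichotomy---to obtain $N(u)\le(L+1)\,\#\Theta(\lambda/\lambda_0)$, finishing with the identical renewal bound $\#\Theta(x)\le C^{d_S}x^{d_S/2}$ via $\sum_{w}\gamma_w^{d_S}=1$. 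The technical subtlety you flag about $F_w^{-1}(A)$ being a genuine extreme set is handled in the paper as the observation underlying Lemma~\ref{pro1}(b), with the same reasoning you give.
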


\begin{proposition}\label{prothm2}
If there exists a pre-localized eigenfunction, then
\begin{equation*}
\liminf_{x\to\infty}\frac{\log \Gamma(x)}{\log x}\geq\kappa
\end{equation*}
for some $0<\kappa\leq d_S/2$.
In particular, for the lattice case (see the exact meaning in Section \ref{sec2}), 
\begin{equation*}
\liminf_{x\to\infty}\frac{\Gamma(x)}{x^{d_S/2}}>0.
\end{equation*}
\end{proposition}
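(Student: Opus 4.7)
The plan is to leverage the self-similar scaling of $-\Delta_\mu$ together with the pre-localized hypothesis to transplant copies of a fixed pre-localized eigenfunction $u_0$ (with eigenvalue $\lambda_0>0$) into disjoint cells at arbitrarily fine scales, producing honest eigenfunctions of $-\Delta_\mu$ with many local extrema. Writing $r_w\mu_w=\prod_{k=1}^m r_{w_k}\mu_{w_k}$ for a word $w=w_1\cdots w_m$ over the $M$ contractions $F_1,\ldots,F_M$ of the IFS generating $K$, set $v_w=u_0\circ F_w^{-1}$ on $F_w(K)$ and $v_w\equiv 0$ elsewhere. Using the scaling identity $\Delta_\mu(f\circ F_i)=r_i\mu_i(\Delta_\mu f)\circ F_i$, combined with $u_0|_{V_0}=0$ and $(du_0)|_{V_0}=0$, the first step is to verify that $v_w\in\mathscr{D}_\mu$ and $-\Delta_\mu v_w=\lambda_w v_w$ with $\lambda_w=\lambda_0/(r_w\mu_w)$, and that every extreme set $A$ of $u_0$ is transported by $F_w$ to an extreme set $F_w(A)\subset F_w(K)^\circ\setminus V_0$ of $v_w$.

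Next, for two distinct words $w,w'$ of the same length $m$ with $r_w\mu_w=r_{w'}\mu_{w'}$, the cells $F_w(K)$ and $F_{w'}(K)$ meet only at finitely many points of $V_m$, where the pre-localized condition forces both functions and their normal derivatives to vanish. Hence $v_w+v_{w'}\in\mathscr{D}_\mu$ remains a $\lambda_w$-eigenfunction whose extreme sets comprise those of each summand (in disjoint cell interiors). More generally, for any composition $(a_1,\ldots,a_M)$ with $\sum a_i=m$, the sum $U_{m,a}=\sum_{w}v_w$ over all $w$ of length $m$ with composition $a$ is a single eigenfunction of eigenvalue $\lambda_0/\prod_i(r_i\mu_i)^{a_i}$ with at least $\binom{m}{a_1,\ldots,a_M}$ extreme sets.

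For the general lower bound, I would optimize the composition by taking $a_i/m\to p_i:=(r_i\mu_i)^{d_S/2}$, which is a probability vector thanks to the defining relation $\sum_i(r_i\mu_i)^{d_S/2}=1$. Stirling gives $\log\binom{m}{a_1,\ldots,a_M}\sim mH(p)$, while the log-eigenvalue behaves as $-m\sum_i p_i\log(r_i\mu_i)$; a direct computation using $p_i=(r_i\mu_i)^{d_S/2}$ shows $H(p)=(d_S/2)\bigl(-\sum_i p_i\log(r_i\mu_i)\bigr)$. Together with monotonicity of $N(x)$, this yields $\liminf_{x\to\infty}\log N(x)/\log x\geq d_S/2$, so the asserted $\kappa>0$ holds (in fact with $\kappa=d_S/2$). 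In the lattice case, where every $r_i\mu_i$ coincides with a common value $q$, summing over \emph{all} $M^m$ words of length $m$ gives a single $\lambda_0/q^m$-eigenfunction with at least $M^m=q^{-md_S/2}=(\lambda_m/\lambda_0)^{d_S/2}$ extreme sets, from which $N(x)\gtrsim x^{d_S/2}$ follows.

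The main obstacle is the opening step: rigorously checking that $v_w$ and its finite superpositions lie in $\mathscr{D}_\mu$ on all of $K$, i.e., that continuity, the local scaling of $\Delta_\mu$, and the normal-derivative matching at the junction set $V_m$ all survive zero-extension. This is precisely where the pre-localization of $u_0$ is indispensable — without the simultaneous vanishing of $u_0|_{V_0}$ and $(du_0)|_{V_0}$, the zero extensions would break the domain compatibility at the junction points and the construction would collapse.
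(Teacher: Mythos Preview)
Your argument for the first assertion is correct and in fact sharper than the paper's. The paper fixes the balanced composition $a_i=n$ for all $i$ (i.e.\ sums $P_wu$ over all permutations of the word $1^n2^n\cdots s^n$) and obtains the explicit exponent $\kappa=\dfrac{s\log s}{-2\log(\gamma_1\cdots\gamma_s)}$, then checks via AM--GM that $\kappa\le d_S/2$. Your entropy optimization, choosing $a_i/m\to p_i:=\gamma_i^{d_S}$ so that $H(p)=-d_S\sum_ip_i\log\gamma_i$, pushes the exponent all the way to $d_S/2$ in every case. The domain-membership step you flag as the main obstacle is handled at the same level of rigor in the paper (it simply asserts $P_wu\in E_D\cap E_N$); the pre-localization of $u_0$ is indeed exactly what makes the zero-extensions glue.

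Your treatment of the lattice case, however, has a genuine gap. The lattice hypothesis is that $\sum_i\mathbb{Z}\log\gamma_i$ is a \emph{discrete} subgroup of $\mathbb{R}$, equivalently $\log\gamma_i=-m_iT$ for positive integers $m_i$ and a common generator $T>0$; it does \emph{not} mean that all $\gamma_i$ coincide. Your argument only covers the special sub-case $m_1=\cdots=m_s=1$. Worse, even once the definition is corrected, the multinomial/Stirling route cannot be rescued: for any fixed composition the Stirling prefactor costs a polynomial factor $m^{(s-1)/2}$, so one only gets $N(\lambda_m)/\lambda_m^{d_S/2}\asymp m^{-(s-1)/2}\to 0$, too weak for the non-logarithmic bound $\liminf_{x\to\infty}N(x)/x^{d_S/2}>0$. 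The paper's fix is to sum $P_wu$ over \emph{all} words $w$ (of varying lengths) with $\sum_k m_{w_k}=n$; these share the single eigenvalue $e^{2nT}\lambda_0$, and the resulting eigenfunction has at least $\#\Gamma(n)$ extreme sets, where $\Gamma(n)=\{w:\sum_k m_{w_k}=n\}$. A renewal-type estimate (Kigami, \textit{Analysis on Fractals}, Lemma~4.3.7) then gives $\#\Gamma(n)\sim c\,e^{nTd_S}$ with no polynomial loss, which is exactly what the sharper conclusion requires.
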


We structure the paper as follows. 

In Section \ref{sec2}, we present the proofs of Propositions \ref{prothm1} and \ref{prothm2}, and provide an equivalent characterization of condition (A). 

Beginning in Section \ref{sec3}, we focus on the canonical Laplacian on the Sierpinski gasket $\mathcal{SG}$. There, we recall the spectral decimation method and state two key preparatory theorems — Theorems \ref{thm4} and \ref{thm5}. Theorem \ref{thm4} confirms the validity of condition (A), while Theorem \ref{thm5} establishes a two-sided estimate of $\#\mathrm{Extr}(u_\lambda)$ for a special class of eigenfunctions.  

The proof of Theorem \ref{thm4} is given in Section \ref{sec4},  followed by the proof of Theorem \ref{thm5} in Section \ref{sec5}. We conclude in Section \ref{sec6} with the proof of Theorem \ref{equnthm1}. 

\section{asymptotic estimate of the extremum counting function}\label{sec2}

The main aim of this section is to prove Propositions \ref{prothm1} and \ref{prothm2}, and to provide an equivalent characterization of condition (A).

Before proceeding, we begin with a brief review of the construction of Laplacians on p.c.f. self-similar sets, and collect some basic facts about the eigenfunctions. All  materials can be found in \cite{K4,St6}.

\subsection{Preliminaries}\label{subsec21}

Let $(X,d)$ be a complete metric space. Denote the set of \textit{symbols} by $S=\{1,2,\cdots,s\}$ with $s\geq2$, and let $\{F_i\}_{i\in S}$ be a collection of contractions on $(X,d)$. We call $\{F_i\}_{i\in S}$ an \textit{iterated function system (i.f.s.)}. Let $K$ be the self-similar set associated with $\{F_i\}_{i\in S}$, i.e. $K$ is the unique non-empty compact set in $X$ satisfying
\[K=\bigcup_{i\in S}F_iK.\]

For $m\geq 1$, we define $W_m=S^m$ as the collection of \textit{words} with \textit{length} $m$, and for $w\in W_m$, write $|w|=m$. For each $w=w_1w_2\cdots w_m\in W_m$, denote
\[F_w=F_{w_1}\circ F_{w_2}\circ\cdots\circ F_{w_m}.\]
By convention, $W_0=\{\varnothing\}$ contains only the empty word, and $F_{\varnothing}={\rm id}$ is the identity map. Set $W_*=\bigcup_{m\geq 0}W_m$.

We define the {\it shift space} $\Sigma=S^\mathbb{N}$ as the collection of all infinite words equipped with the usual product topology. For $\omega=\omega_1\omega_2\cdots\in\Sigma$ and $m\geq 0$, we write $[\omega]_m=\omega_1\omega_2\cdots\omega_m\in W_m$ as the $m$-th \textit{truncation} of $\omega$. Set $[\omega]_0=\varnothing$. For any $w\in W_*$, define the \textit{cylinder set} generated by $w$ as
\[\Sigma_w=\{\omega\in\Sigma:[\omega]_{|w|}=w\}.\]

We say that a finite subset $P\subset W_*$ is a \textit{partition} of $\Sigma$ if $\Sigma_w\cap\Sigma_{w'}=\emptyset$ for any $w\neq w'\in P$ and $\Sigma=\bigcup_{w\in P}\Sigma_w$. Clearly, $W_m$ is a partition for any $m\geq 1$.

Let $\Pi:\Sigma\to K$ be the continuous surjection defined by
\[\{\Pi(\omega)\}=\bigcap_{m\geq 1}F_{[\omega]_m}K.\]
We define the \textit{critical set} $\mathscr{C}\subset\Sigma$ and the \textit{post-critical set} $\mathscr{P}\subset\Sigma$ by
\[\mathscr{C}=\Pi^{-1}\Big(\bigcup_{i\neq j}(F_iK\cap F_jK)\Big),\quad\mathscr{P}=\bigcup_{m\geq 1}\sigma^m(\mathscr{C}),\]
where the \textit{shift map} $\sigma:\Sigma\to\Sigma$ is defined by $\sigma(\omega_1\omega_2\cdots)=\omega_2\omega_3\cdots$. A set $K$ is called a \textit{post-critically finite (p.c.f.)} self-similar set if $\#\mathscr{P}<\infty$. Throughout the paper, we always assume that $K$ is a connected p.c.f. self-similar set.

We denote $V_0=\Pi(\mathscr{P})$ and call it the \textit{boundary} of $K$. Write $L=\# V_0$ and list $V_0=\{p_1,p_2,\cdots,p_L\}$. For $m\geq 1$, set
\[V_m=\bigcup_{w\in W_m}F_wV_0\quad\text{ and }\quad V_*=\bigcup_{m\geq 0}V_m.\]

It is clear that $V_m\subset V_{m+1}$ and $K$ is the closure of $V_*$. For any partition $\Gamma$, we always have $F_wK\cap F_{w'}K=F_wV_0\cap F_{w'}V_0$ for any $w\neq w'\in\Gamma$.

For a finite set $V$, denote $l(V)$ as the collection of all real-valued functions on $V$. Let $H:l(V)\to l(V)$ be a symmetric linear map (matrix), and call it a \textit{(discrete) Laplacian} on $V$ if $H$ is non-positive definite; $Hu=0$ if and only if $u$ is constant on $V$; and $H_{pq}\geq 0$ for any $p\neq q\in V$. Define the associated \textit{(discrete) energy form} $\mathscr{E}_H$ on $V$ by $\mathscr{E}_H(u,v)=-u^\mathrm{t}Hv$ for $u,v\in l(V)$.

Given a Laplacian $H_0$ on $V_0$, and let $\mathbf{r}=(r_1,r_2,\cdots,r_s)$ with $r_i>0,\ 1\leq i\leq s$, define
\[\mathscr{E}_0(u,v):=\mathscr{E}_{H_0}(u,v)\quad\text{ for any } u,v\in l(V_0),\]
and inductively for $m\geq 1$,
\[\mathscr{E}_m(u,v):=\sum_{i\in S}r_i^{-1}\mathscr{E}_{m-1}(u\circ F_i,v\circ F_i)\quad\text{ for any }  u,v\in l(V_m).\]
Let $H_m:=\sum_{w\in W_m}r_w^{-1}R_w^\mathrm{t}H_0R_w$, where $r_w=r_{w_1}r_{w_2}\cdots r_{w_m}$ and $R_w:l(V_m)\to l(V_0)$ is defined by $R_wf=f\circ F_w$ for $w=w_1w_2\cdots w_m\in W_m$. Then $H_m:l(V_m)\to l(V_m)$ is a Laplacian on $V_m$ so that $\mathscr{E}_{H_m}=\mathscr{E}_m$. 

For $m\geq0$ and $u\in  l(V_m)$,  write $\mathscr{E}_m(u):=\mathscr{E}_m(u,u)$ for short.

We call the pair $(H_0,\mathbf{r})$ a \textit{harmonic structure} if for any $v\in l(V_0)$,
\[\mathscr{E}_0(v)=\min\{\mathscr{E}_1(u):u\in l(V_1),\ u|_{V_0}=v\}.\]
Further, $(H_0,\mathbf{r})$ is called {\it regular} if $0<r_i<1$ for all $1\leq i\leq s$. Assume $(H_0,\mathbf{r})$ is a regular harmonic structure, then for each $u\in l(V_*)$, by the self-similarity, the sequence $\{\mathscr{E}_m(u|_{V_m})\}_{m\geq 0}$ is non-decreasing. Define
\[\mathscr{E}(u):=\lim_{m\to\infty}\mathscr{E}_m(u|_{V_m}),\quad\quad\mathscr{F}:=\{u\in l(V_*):\mathscr{E}(u)<\infty\},\]
and call $\mathscr{E}(u)$ the \textit{energy} of $u$. By the regularity of $(H_0,\mathbf{r})$, the function $u\in\mathscr{F}$ can be uniquely extended to a continuous function on $K$, still denoted by $u$. We thus regard $\mathscr{F}$ as a subset of $C(K)$.

Clearly, for $m\geq 0$, $(\mathscr{E},\mathscr{F})$ satisfies the \textit{self-similar identity}
\[\mathscr{E}(u)=\sum_{w\in W_m}r_w^{-1}\mathscr{E}(u\circ F_w)\quad\text{ for any } u\in\mathscr{F}.\]

From now on, we always assume that there exists a regular harmonic structure $(H_0,\mathbf{r})$ on the p.c.f. self-similar set $K$.

For $(\mu_1,\mu_2,\cdots,\mu_s)\in(0,1)^s$ with $\sum_{i\in S}\mu_i=1$, we denote $\mu$ the unique Borel probability measure on $K$ satisfying $\mu=\sum_{i\in S}\mu_i\mu\circ F_i^{-1}$, call it the {\it self-similar measure}  \cite{Hut} associated with $(\mu_1,\mu_2,\cdots,\mu_s)$. For a p.c.f. self-similar set $K$, we have  $\mu(F_wK)=\mu_w:=\mu_{w_1}\mu_{w_2}\cdots\mu_{w_m}$ for any $w=w_1w_2\cdots w_m\in W_m$ with $m\geq 0$.

It is known that $(\mathscr{E},\mathscr{F})$ is a local regular Dirichlet form on $L^2(K,\mu)$. Its infinitesimal generator, the $\mu$-Laplacian $\Delta_\mu$ on $K$, is obtained as a scaled limit of the discrete Laplacians $H_m$ on $V_m$ in the following way.

For $p\in V_m$, let $\psi_{m,p}$ be the unique function in $\mathscr{F}$ that attains the following minimum: $\mathscr{E}(\psi_{m,p})=\min\{\mathscr{E}(u):u\in\mathscr{F},u(p)=1,u(q)=0\text{ for }q\in V_m\setminus\{p\}\}$. For $u\in C(K)$, if there exists $f\in C(K)$ such that
\begin{equation}\label{equation2.1}
\lim_{m\to\infty}\max_{p\in V_m\setminus V_0}|\mu_{m,p}^{-1}(H_mu)(p)-f(p)|=0,
\end{equation}
where $\mu_{m,p}=\int_K\psi_{m,p}\mathrm{d}\mu$, then we say that $u$ is in the domain of the \textit{$\mu$-Laplacian} $\Delta_\mu$ and write $\Delta_\mu u=f$. Denote the domain of $\Delta_\mu$ as $\mathscr{D}_\mu$. By the regularity of $(H_0,\mathbf{r})$ and self-similarity of $\mu$, we have for $u\in \mathscr D_{\mu}$,
\begin{equation}\label{equation2.2}
u\circ F_i\in \mathscr D_\mu,\quad \Delta_\mu(u\circ F_i)=r_i\mu_i(\Delta_\mu u)\circ F_i,\quad\text{ for any }i\in S,
\end{equation}
and by iteration,
\begin{equation*}
u\circ F_w\in \mathscr D_\mu,\quad \Delta_\mu(u\circ F_w)=r_w\mu_w(\Delta_\mu u)\circ F_w,\quad\text{ for any }w\in W_*.
\end{equation*}

It is known that $\mathscr{D}_\mu\subset\mathscr{F}$, and the \textit{Neumann derivative} of $u$ on the boundary, defined by $(\mathrm{d}u)_p=\lim_{m\to\infty}-(H_mu)(p)$, exists for $u\in\mathscr{D}_\mu,\ p\in V_0$. For $u\in\mathscr F$ and $v\in \mathscr D_\mu$, the following \textit{Gauss-Green formula} holds,
\[\mathscr{E}(u,v)=\sum_{p\in V_0}u(p)(\mathrm{d}v)_p-\int_Ku\Delta_\mu v\mathrm{d}\mu.\]

For $u\in\mathscr{D}_\mu$, write
\begin{align*}
u|_{V_0}&=\big(u(p_1),u(p_2),\cdots,u(p_L)\big),\\
\mathrm{d}u|_{V_0}&=\big((\mathrm{d}u)_{p_1},(\mathrm{d}u)_{p_2},\cdots,(\mathrm{d}u)_{p_L}\big),
\end{align*}
and define 
\begin{align*}
\mathscr{D}_{D,\mu}&:=\{u\in\mathscr{D}_\mu: \ u|_{V_0}=\mathbf{0}\},\\
\mathscr{D}_{N,\mu}&:=\{u\in\mathscr{D}_\mu: \ \mathrm{d}u|_{V_0}=\mathbf{0}\}.
\end{align*}

For $\lambda\geq 0$, define
\begin{align*}
&E(\lambda)=\{u\in\mathscr{D}_\mu: -\Delta_\mu u(x)=\lambda u(x) \text{ for any } x\in K\setminus V_0\},\\
&E_D(\lambda)=E(\lambda)\cap\mathscr{D}_{D,\mu}, \quad\text{ and } E_N(\lambda)=E(\lambda)\cap\mathscr{D}_{N,\mu}.
\end{align*}
Note that a non-trivial (not identically zero) function $u\in E(\lambda)$ is a $\lambda$-eigenfunction of $-\Delta_\mu$.  
If $\dim E_D(\lambda)\geq1$, we refer to $\lambda$ as a \textit{Dirichlet eigenvalue} of $-\Delta_\mu$; and if $\dim E_N(\lambda)\geq1$, we refer to $\lambda$ as a \textit{Neumann eigenvalue} of $-\Delta_\mu$. 

\begin{proposition}\label{pro01}
For any $u\in E(\lambda)$ and any $w\in W_*$, we have $u\circ F_w\in E(r_w\mu_w\lambda)$.
\end{proposition}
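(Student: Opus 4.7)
The proposition is a direct consequence of the scaling identity $\Delta_\mu(u \circ F_w) = r_w \mu_w (\Delta_\mu u) \circ F_w$ already recorded (by iteration of equation (2.1)) in the preliminaries. My plan is simply to substitute the eigenvalue equation into that identity.

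First, since $u \in E(\lambda) \subset \mathscr{D}_\mu$, the iterated form of (2.1) yields $u \circ F_w \in \mathscr{D}_\mu$, so $\Delta_\mu(u \circ F_w)$ is well defined as a continuous function on $K$. Next, I would upgrade the relation $-\Delta_\mu u = \lambda u$, which the definition of $E(\lambda)$ postulates only on $K \setminus V_0$, to all of $K$: both sides belong to $C(K)$ (since $u \in \mathscr{D}_\mu$ forces $\Delta_\mu u \in C(K)$), and $K \setminus V_0$ is dense in $K$. Substituting $\Delta_\mu u = -\lambda u$ into the scaling identity then gives
\[
-\Delta_\mu(u \circ F_w)(x) = -r_w \mu_w (\Delta_\mu u)(F_w x) = r_w \mu_w \lambda \cdot u(F_w x) = (r_w \mu_w \lambda)(u \circ F_w)(x)
\]
for every $x \in K$, and in particular for $x \in K \setminus V_0$. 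This is precisely the assertion that $u \circ F_w \in E(r_w \mu_w \lambda)$.

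There is no genuine obstacle here: the analytic core, namely the behavior of $\Delta_\mu$ under composition with the contractions $F_w$, is already packaged in (2.1). The only conceptual point worth flagging is the continuity-based extension of the eigenvalue equation from $K \setminus V_0$ to $K$; alternatively, one can argue pointwise by noting that the p.c.f. structure gives $F_w K \cap V_0 \subset F_w V_0$, so $F_w x \in V_0$ would force $x \in V_0$, meaning the eigenvalue equation at $F_w x$ is available whenever $x \in K \setminus V_0$.
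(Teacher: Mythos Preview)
Your proof is correct and follows essentially the same approach as the paper, which simply states that the proposition is an immediate consequence of \eqref{equation2.1} and induction. You have merely filled in the details that the paper leaves implicit, including the continuity argument extending the eigenvalue equation to all of $K$.
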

\begin{proof} This is an immediate consequence of \eqref{equation2.2} and induction.
\end{proof}

By a standard theory, $\dim E_D(\lambda)<\infty$ ($\dim E_N(\lambda)<\infty$) for every $\lambda\geq0$, and  the \textit{Dirichlet (or Neumann) spectrum}, the collection of all Dirichlet (or Neumann) eigenvalues, is discrete and has an only limit point $+\infty$. We list them in an increasing order (each eigenvalue is counted according to its multiplicity) as
\[\Lambda^D:=\{\lambda_1^D,\lambda_2^D,\cdots\}\quad \text{ with }0<\lambda_1^D\leq\lambda_2^D\leq\cdots\]
for the Dirichlet case, and similarly,
\[\Lambda^N:=\{\lambda_1^N,\lambda_2^N,\cdots\}\quad \text{ with }0=\lambda_1^N\leq\lambda_2^N\leq\cdots\]
for the Neumann case.

Denote $\gamma_i=\sqrt{r_i\mu_i}$ for all $i\in S$, and write $\gamma_w=\gamma_{w_1}\gamma_{w_2}\cdots\gamma_{w_m}$ for $w=w_1w_2\cdots w_m\in W_m$.
Let $d_S$ be the unique real number satisfying
\begin{equation}\label{defds}
\sum_{i\in S}\gamma_i^{d_S}=1.
\end{equation}
We call $d_S$ the \textit{spectral dimension} of $-\Delta_\mu$.

For $x>0$, define $\rho_D(x):=\#\{\lambda\leq x: \lambda\in \Lambda^D\}$ and $\rho_N(x):=\#\{\lambda\leq x: \lambda\in \Lambda^N\}$ and call $\rho_D$ (or $\rho_N$) the \textit{Dirichlet (or Neumann) eigenvalue counting function}. In 1993, Kigami and Lapidus \cite{KL} established that
\begin{equation*}
0<\liminf_{x\rightarrow\infty}\rho_*(x)/x^{d_S/2}\leq\limsup_{x\rightarrow\infty}\rho_*(x)/x^{d_S/2}<\infty, \quad \text{ for $*=D,N$.}
\end{equation*}
Moreover,

(1). {\it Non-lattice case}: if $\sum_{i\in S}\mathbb Z \log \gamma_i$ is a dense subgroup of $\mathbb R$, then the limit $\lim_{x\rightarrow\infty}\rho_*(x)/x^{d_S/2}$ exists.

(2). {\it Lattice case}: if $\sum_{i\in S}\mathbb Z \log \gamma_i$ is a discrete subgroup of $\mathbb R$, letting $T>0$ be its generator, then $\rho_*(x)=(G(\log x/2)+o(x))x^{d_S/2}$, where $G$ is a right-continuous $T$-periodic function with $0<\inf G(x)\leq \sup G(x)<\infty$.

\subsection{Proofs of Propositions \ref{prothm1} and \ref{prothm2}}\label{subsec22}

In this subsection, we prove the asymptotic estimate of the extremum counting function $\Gamma(x)$, i.e. Propositions \ref{prothm1} and \ref{prothm2}.

Below is a basic observation about $\#\mathrm{Extr}(u)$.
\begin{lemma}\label{pro1}
Let $u\in\mathscr{D}_\mu$, $k\neq 0$, $w\in W_*$ and let $P$ be a partition of $\Sigma$.

(a). $\#\mathrm{Extr}(u)=\#\mathrm{Extr}(ku)$;

(b). $\sum_{w\in P}\#\mathrm{Extr}(u\circ F_w)\leq\#\mathrm{Extr}(u)\leq \sum_{w\in P}\#\mathrm{Extr}(u\circ F_w)+\#(V_P\setminus V_0),$ where $V_P=\bigcup_{w\in P}F_wV_0$.
\end{lemma}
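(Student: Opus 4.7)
\smallskip
\noindent\textbf{Proof proposal for Lemma \ref{pro1}.}

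Part (a) is essentially trivial. The key point is that $u^{-1}(c)=(ku)^{-1}(kc)$ as subsets of $K$, so connected components are the same. If $k>0$ then $u\leq c$ on a neighborhood iff $ku\leq kc$ on that neighborhood, so local maxima (resp. minima) of $u$ correspond bijectively to local maxima (resp. minima) of $ku$. If $k<0$, maxima and minima swap roles. In either case $N(u)=N(ku)$.

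For the lower bound in (b), I would construct, for each $w\in\Gamma$, an injection from extreme sets of $u\circ F_w$ into extreme sets of $u$, with images in different cells being disjoint. Let $A'$ be an extreme set of $u\circ F_w$ with value $c$, and set $A=F_w(A')$. Because $F_w$ is a homeomorphism $K\to F_wK$, $A$ is connected and $u\equiv c$ on $A$. The condition $A'\cap V_0=\emptyset$ combined with the p.c.f. identity $F_wK\cap V_0\subseteq F_wV_0$ gives $A\cap V_0=\emptyset$, and in fact $A$ lies at positive distance from $F_wV_0$, hence at positive distance from $K\setminus F_wK$ (since cells meet only at their sub-cell boundaries). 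This separation from other cells has two consequences: (i) any small enough neighborhood of $A$ in $K$ stays inside $F_wK$, so pushing forward the neighborhood condition of $A'$ through the similitude $F_w$ (and scaling the radius by the contraction ratio) yields the neighborhood condition for $A$; (ii) the connected component of $u^{-1}(c)$ in $K$ containing $A$ cannot escape $F_wK$, so it agrees with $A$, verifying the maximality in (a-1). For $w_1\neq w_2\in\Gamma$ the cells meet only in $V_\Gamma$, while $F_{w_i}(A'_i)\cap V_\Gamma=\emptyset$, so the images are automatically disjoint.

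For the upper bound, I would split extreme sets of $u$ into two classes. An extreme set $A$ either (I) satisfies $A\cap V_\Gamma=\emptyset$, or (II) meets $V_\Gamma\setminus V_0$ (noting $A\cap V_0=\emptyset$ by definition). In case (I), since $A$ is connected and the cells $\{F_wK:w\in\Gamma\}$ cover $K$ while meeting only in $V_\Gamma$, there is a unique $w\in\Gamma$ with $A\subseteq F_wK\setminus F_wV_0$. Setting $A'=F_w^{-1}(A)$, the same cell-separation argument as above (now read backwards through the homeomorphism $F_w$) shows $A'$ is an extreme set of $u\circ F_w$. Distinct $A$ in class (I) are disjoint, so yield distinct pairs $(w,A')$; hence class (I) contributes at most $\sum_{w\in\Gamma}N(u\circ F_w)$. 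In case (II), choose any point $p(A)\in A\cap(V_\Gamma\setminus V_0)$; since extreme sets are pairwise disjoint, $A\mapsto p(A)$ is injective, so class (II) contributes at most $\#(V_\Gamma\setminus V_0)$. Summing gives the stated upper bound.

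The main subtlety I expect is the careful matching of connected components across $F_w$: a priori, a connected component of $u^{-1}(c)\cap F_wK$ in $F_wK$ might fail to be a connected component in $K$ because of extensions through the sub-cell boundary $F_wV_0$. The cleanest way to rule this out is to use the $V_0$-avoidance built into Definition \ref{def1}(a-2) together with the p.c.f. fact $F_{w_1}K\cap F_{w_2}K\subseteq F_{w_1}V_0\cap F_{w_2}V_0$, which guarantees the strict separation between the extreme set and neighboring cells that powers both directions of the argument. Everything else is routine manipulation of neighborhoods under the similitudes $F_w$.
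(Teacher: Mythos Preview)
Your proof is correct and follows the same approach as the paper: part (a) is declared obvious, and part (b) rests on the dichotomy that an extreme set of $u$ either lies in a single cell $F_wK$ avoiding $F_wV_0$ (hence pulls back to an extreme set of $u\circ F_w$), or meets $V_\Gamma\setminus V_0$. The paper records only this one-line observation; you have spelled out the details (compactness, cell separation via the p.c.f. intersection property, matching of connected components) that the paper leaves implicit.
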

\begin{proof}
(a) is obvious. (b) follows from the observation that for an extreme set $A$ of $u$, either $A\subset F_wK$ and $A\cap F_wV_0=\emptyset$ for some $w\in P$, or $A\cap(V_P\setminus V_0)\neq\emptyset$. 
\end{proof}

First, let us look at the upper bound asymptotic estimate of $\Gamma(x)$ (either $\Gamma_D(x)$ or $\Gamma_N(x)$) (Proposition \ref{prothm1}) under condition (A).
\begin{proof}[Proof of Proposition \ref{prothm1}]
For $x\geq 1$, write
\begin{equation*}
\Theta(x)=\{w=w_1w_2\cdots w_l\in W_*: \gamma^2_{w_1w_2\cdots w_{l-1}}x\geq 1>\gamma^2_wx\},
\end{equation*}
and let $\theta(x)=\#\Theta(x)$. Clearly, $\Theta(x)$ is a partition of $\Sigma$, and for $w=w_1w_2\cdots w_l\in\Theta(x)$,
\[\gamma_w<x^{-1/2}\leq \gamma_{w_1w_2\cdots w_{l-1}}\leq C\gamma_w,\]
with $C=(\min_{i\in S}\gamma_i)^{-1}$. Then, by iterating \eqref{defds}, we have
\[1=\sum_{w\in\Theta(x)}\gamma_w^{d_S}<\sum_{w\in\Theta(x)}x^{-d_S/2}\leq \sum_{w\in\Theta(x)}C^{d_S}\gamma_w^{d_S}=C^{d_S},\]
which yields $x^{d_S/2}<\theta(x)\leq C^{d_S}x^{d_S/2}$.

Let $\lambda\geq \lambda_0$ and $u\in E(\lambda)$. Then, for any $w\in\Theta(\lambda/\lambda_0)$ we have $\gamma^2_w \lambda<\lambda_0$, and by Proposition \ref{pro01}, $u\circ F_w\in E(\gamma^2_w \lambda)$, which gives $\#\mathrm{Extr}(u\circ F_w)\leq 1$ by condition (A).

By Lemma \ref{pro1}-(b), we have
\begin{equation*}
\begin{aligned}
\#\mathrm{Extr}(u)&\leq\sum_{w\in\Theta(\lambda/\lambda_0)}\#\mathrm{Extr}(u\circ F_w)+\#(V_{\Theta(\lambda/\lambda_0)}\setminus V_0)\\
&\leq\theta(\lambda/\lambda_0)+\#V_0\cdot\theta(\lambda/\lambda_0)=(L+1)\theta(\lambda/\lambda_0)\\
&\leq C^{d_S}(L+1)\lambda_0^{-d_S/2}\lambda^{d_S/2},
\end{aligned}
\end{equation*}
where $L=\#V_0$. Hence for $x\geq\lambda_0$, we have
\begin{equation*}
\begin{aligned}
\Gamma(x)&=\sup\{\#\mathrm{Extr}(u):u\in E_D(\lambda)\text{ or }E_N(\lambda),\ 0\leq\lambda\leq x\}\\
&\leq\sup\{\#\mathrm{Extr}(u):u\in E(\lambda),\ 0\leq\lambda\leq x\}\leq C^{d_S}(L+1)\lambda_0^{-d_S/2}x^{d_S/2}
\end{aligned}
\end{equation*}
and
\[\limsup_{x\to\infty}\frac{\Gamma(x)}{x^{d_S/2}}\leq C^{d_S}(L+1)\lambda_0^{-d_S/2}<\infty.\]
\end{proof}

\noindent{\bf Remark.} Condition (A) is not universally valid for all p.c.f. self-similar sets.  
For example, the {\it modified Koch curve} (Figure~\ref{figure2}) analyzed in \cite[Section 4.2]{Sh2} and \cite{M} satisfies $\#\mathrm{Extr}(u_\lambda)=\infty$ for every axially symmetric $u_\lambda\in E(\lambda)\setminus\{0\}$, independent of how small $\lambda>0$ is chosen. Indeed, for each singleton on the axis of symmetry, the local geometry together with $\lambda\neq 0$ forces the singleton to be an extreme set of $u_\lambda$, and there are infinitely many such singletons.
For the same reason, condition (A) also fails for Vicsek set-like fractals (see \cite[Page 95]{St6} for the definition), since each endpoint of the branches within the support of the eigenfunction is an extreme singleton, yielding $\#\mathrm{Extr}(u_\lambda)=\infty$ for any $\lambda>0$. By contrast, we shall verify in Section \ref{sec4} that condition (A) does hold for the Sierpinski gasket $\mathcal{SG}$ equipped with its canonical Laplacian.

\begin{figure}[h]
\begin{center}
\includegraphics[width=12.5cm]{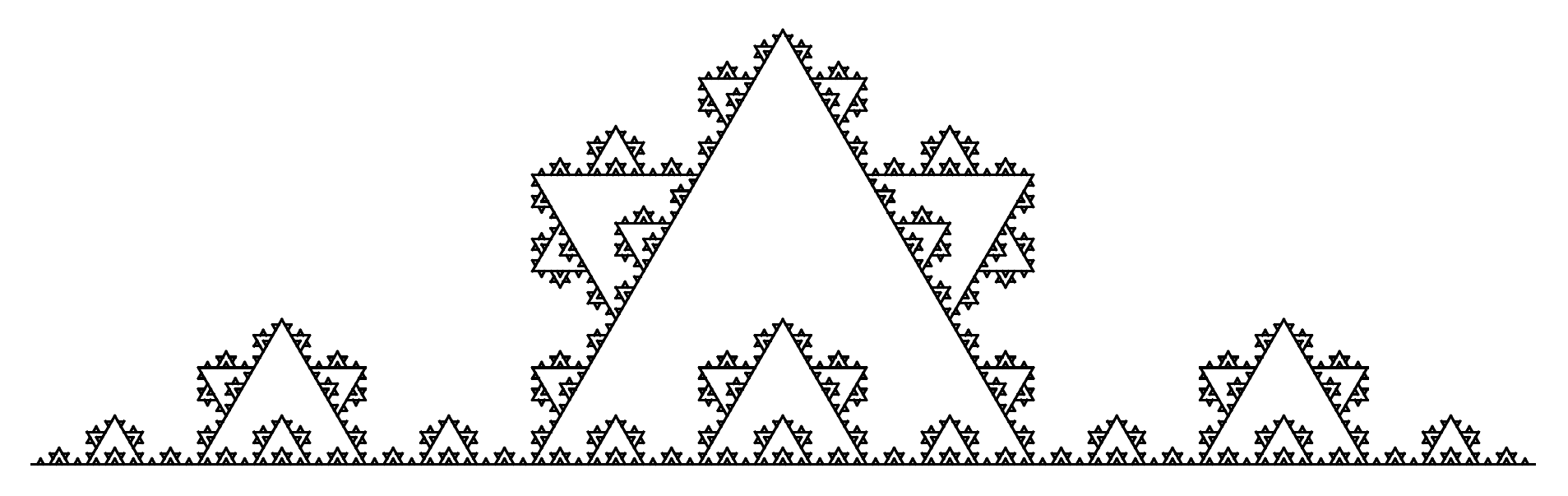}\\
\caption{The modified Koch curve.}
\label{figure2}
\end{center}
\end{figure}

In \cite[Section 4.3]{K4}, Kigami introduced the concept of pre-localized eigenfunctions.
Recall that a non-trivial $u$ is a pre-localized eigenfunction of $-\Delta_\mu$ if $u\in E_D(\lambda)\cap E_N(\lambda)$ for some $\lambda$.

A pre-localized eigenfunction produces genuinely ``localized" eigenfunctions whose support is confined to a single small cell of $K$.  In the lattice case, the existence of pre-localized eigenfunction is known to be equivalent to a jump in the integrated density of states  \cite[Theorem 4.3.4]{K4}, \cite[Theorem 4.4]{BK}. In the general case, a more restrictive symmetry assumption is required to guarantee existence; see \cite[Section 4.4]{K4}, \cite[Theorem 5.4]{BK}. It is known that this  requirement is satisfied for all affine nested fractals with $\# V_0\geq 3$ (including $\mathcal SG$ as a typical example) endowed with a symmetric invariant harmonic structure and a self-similar measure (\cite[Corollary 4.4.11]{K4}, \cite[Theorem 6.2]{BK}). 

Provided the existence of a pre-localized eigenfunction, we have a  lower bound estimate of $\Gamma(x)$, see Proposition \ref{prothm2}.

\begin{proof}[Proof of Proposition \ref{prothm2}] Suppose $u$ is a pre-localized eigenfunction belonging to some $\lambda>0$. Since $u\neq 0$ and $u|_{V_0}=\mathbf{0}$, $u$ has at least one local extremum in $K\setminus V_0$, which gives $\#\mathrm{Extr}(u)\geq 1$. 
For $w\in W_*$, define $P_wu$ by
\begin{equation*}
(P_wu)(x)=
\begin{cases}
u(F_w^{-1}(x))\quad\text{if }x\in F_wK,\\
0\quad\quad\quad\quad\quad\text{otherwise}.
\end{cases}
\end{equation*}
Then, by Proposition \ref{pro01},  we have $P_wu\in E_D(\gamma_w^{-2}\lambda)\cap E_N(\gamma_w^{-2}\lambda)$ and $\#\mathrm{Extr}(P_wu\circ F_w)=\#\mathrm{Extr}(u)\geq 1$.

Let $n\geq 1$ be an integer, denote $w^{(n)}=1^n2^n\cdots s^n\in W_{sn}$, and
\begin{equation*}
W(n)=\{w\in W_{sn}: w\text{ is a permutation of }w^{(n)}\}.
\end{equation*}
Clearly, for $w\in W(n)$, $\gamma_w=(\gamma_1\gamma_2\cdots\gamma_s)^n$. By Stirling's formula $\sqrt{2\pi}e^{-n}n^{n+1/2}<n!<\sqrt{2\pi}e^{-n}n^{n+1/2}e^{1/(12n)}$, we obtain
\begin{align*}
\#W(n)&=\frac{(sn)!}{(n!)^s}>\frac{\sqrt{2\pi}e^{-sn}(sn)^{sn+1/2}}{(\sqrt{2\pi})^se^{-sn}n^{sn+s/2}e^{s/(12n)}}\\
&\geq e^{-s/12}(2\pi)^{(1-s)/2}s^{sn+1/2}n^{(1-s)/2}.
\end{align*}

Consider $u_{n}:=\sum_{w\in W(n)}P_wu$. It is direct to check that $u_{n}\in E_D(\gamma^{-2}_{w^{(n)}}\lambda)\cap E_N(\gamma^{-2}_{w^{(n)}}\lambda)$. By Lemma \ref{pro1}-(b), 
\begin{equation}\label{eq2.3}
\Gamma(\gamma^{-2}_{w^{(n)}}\lambda)\geq\#\mathrm{Extr}(u_{n})\geq\sum_{w\in W(n)}\#\mathrm{Extr}\big((P_wu)\circ F_w\big)\geq\#W(n).
\end{equation}

For large $x>0$, let $n$ be the unique integer such that $\gamma^{-2}_{w^{(n)}}\lambda\leq x<\gamma^{-2}_{w^{(n+1)}}\lambda$. We have
\begin{align*}
\frac{\log \Gamma(x)}{\log x}&>\frac{\log \Gamma(\gamma^{-2}_{w^{(n)}}\lambda)}{\log \gamma^{-2}_{w^{(n+1)}}+\log\lambda}\geq\frac{\log\#W(n)}{-2(n+1)\log(\gamma_1\gamma_2\cdots\gamma_s)+\log\lambda}\\
&>\frac{\log(e^{-s/12}(2\pi)^{(1-s)/2}s^{1/2})+sn\log s+\frac{1-s}{2}\log n}{-2(n+1)\log(\gamma_1\gamma_2\cdots\gamma_s)+\log\lambda}\\
&\to\frac{s\log s}{-2\log(\gamma_1\gamma_2\cdots\gamma_s)}:=\kappa>0
\end{align*}
as $x\to\infty$. Note that by (\ref{defds}),
\[\log(\gamma_1\gamma_2\cdots\gamma_s)=\frac{s}{d_S}\log\sqrt[s]{\gamma_1^{d_S}\gamma_2^{d_S}\cdots\gamma_s^{d_S}}\leq-\frac{1}{d_S}s\log s\] 
gives $\kappa\leq d_S/2$.

Now we turn to the lattice case.  Let $T>0$ be the generator of $\sum_{i\in S}\mathbb{Z}\log\gamma_i$, so that $\log\gamma_i=-m_iT$ for $i\in S$, where $m_1, m_2, \cdots, m_s$ are positive integers with greatest common divisor $1$.

For an integer $n\geq1$, define
\[M(n)=\{w=w_1w_2\cdots w_l\in W_*: \gamma_w^{-2}\lambda= e^{2nT}\lambda\},\]
so that
\[M(n)=\{w=w_1w_2\cdots w_l\in W_*: \sum_{i=1}^lm_{w_i}=n\}.\]

Now consider $u_{n}:=\sum_{w\in M(n)}P_wu \in E_D(\lambda e^{2nT})\cap E_N(\lambda e^{2nT})$. Then, similarly to \eqref{eq2.3}, by Lemma \ref{pro1}-(b),
\[\Gamma(e^{2nT}\lambda)\geq\#\mathrm{Extr}(u_{n})\geq\sum_{w\in M(n)}\#\mathrm{Extr}\big((P_wu)\circ F_w\big)\geq\# M(n).\]
For large $x>0$, by choosing $n$ to be the unique integer satisfying $e^{2nT}\lambda\leq x<e^{2(n+1)T}\lambda$, we have
\[\frac{\Gamma(x)}{x^{d_S/2}}>\frac{\Gamma(e^{2nT}\lambda)}{e^{(n+1)Td_S}\lambda^{d_S/2}}\geq\frac{1}{e^{Td_S}\lambda^{d_S/2}}\cdot\frac{\# M(n)}{e^{nTd_S}}.\]
Finally, letting $x\to\infty$, it holds that 
\[\liminf_{x\to\infty}\frac{\Gamma(x)}{x^{d_S/2}}\geq\frac{1}{e^{Td_S}\lambda^{d_S/2}}\lim_{n\to\infty}\frac{\# M(n)}{e^{nTd_S}}=\frac{1}{e^{Td_S}\lambda^{d_S/2}}\big(\sum_{i\in S}m_i\gamma_i^{d_S}\big)^{-1}>0,\]
where the equality follows from \cite[Lemma 4.3.7]{K4}.
\end{proof}

\subsection{An equivalent condition for (A)}\label{subsec23}
Before proceeding, we introduce two projections from $\mathscr D_{\mu}$ to $\mathbb R^L$:
\[\tau^D:\mathscr{D}_\mu\to\mathbb{R}^L, \ \ \tau^D(u)=u|_{V_0},\]
\[\tau^N:\mathscr{D}_\mu\to\mathbb{R}^L, \ \ \tau^N(u)=\mathrm{d}u|_{V_0};\]
and for $\lambda\geq 0$, write
\begin{equation}\label{equationtau}
\tau_\lambda^D:=\tau^D|_{E(\lambda)},\ \ \tau_\lambda^N:=\tau^N|_{E(\lambda)}.
\end{equation}

The following is a basic observation.
\begin{lemma}\label{thm2.5}
$E(\lambda)$ is a linear space. $\tau_\lambda^D: E(\lambda)\to \mathbb R^L$ is a bijection for $\lambda\notin\Lambda^D$, and $\tau_\lambda^N: E(\lambda)\to \mathbb R^L$ is a bijection for $\lambda\notin\Lambda^N$. 
\end{lemma}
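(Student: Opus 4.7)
The linearity of $E(\lambda)$ is immediate since $-\Delta_\mu$ is linear, so $-\Delta_\mu(au+bv)=\lambda(au+bv)$ whenever $u,v\in E(\lambda)$ and $a,b\in\mathbb{R}$. For the trace maps, the basic observation is that $\ker\tau_\lambda^D=E(\lambda)\cap\mathscr{D}_{D,\mu}=E_D(\lambda)$ and $\ker\tau_\lambda^N=E(\lambda)\cap\mathscr{D}_{N,\mu}=E_N(\lambda)$, both of which are trivial precisely under the hypotheses $\lambda\notin\Lambda^D$ and $\lambda\notin\Lambda^N$. This settles injectivity in both cases.

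For the surjectivity of $\tau_\lambda^D$, my plan is to solve, for arbitrary $v\in\mathbb{R}^L$, the boundary value problem $-\Delta_\mu u=\lambda u$ with $u|_{V_0}=v$. I would decompose $u=h_v+w$, where $h_v\in\mathscr{F}$ is the (unique) $\mathscr{E}$-harmonic extension of $v$ (so $\Delta_\mu h_v=0$) and $w$ is to be found in $\mathscr{D}_{D,\mu}$. The eigen-equation then becomes $(-\Delta_\mu-\lambda I)w=\lambda h_v$ with $w|_{V_0}=0$. Because $\lambda\notin\Lambda^D$, the Dirichlet Laplacian has $\lambda$ in its resolvent set; expanding $h_v$ against the $L^2(K,\mu)$-orthonormal basis $\{\phi_n\}$ of Dirichlet eigenfunctions with eigenvalues $\lambda_n^D$, one defines
\[
w=\sum_{n\geq 1}\frac{\lambda\langle h_v,\phi_n\rangle}{\lambda_n^D-\lambda}\,\phi_n,
\]
which solves the reduced problem and yields $u=h_v+w\in E(\lambda)$ with $\tau_\lambda^D u=v$.

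The Neumann case follows the same scheme via the weak formulation. Using the Gauss--Green formula, to find $u\in E(\lambda)$ with $\mathrm{d}u|_{V_0}=v$ one solves
\[
\mathscr{E}(u,\phi)-\lambda\int_K u\phi\,\mathrm{d}\mu=\sum_{p\in V_0}v_p\phi(p)\qquad\text{for every }\phi\in\mathscr{F},
\]
with no boundary constraint imposed on $u$. When $\lambda\notin\Lambda^N$ the bilinear form on the left is non-degenerate on $\mathscr{F}$ (again through the spectral decomposition, now for the Neumann Laplacian), which furnishes a unique solution.

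The main obstacle throughout is surjectivity, which rests on standard spectral theory — compact resolvent, completeness of the eigenfunction basis, Fredholm alternative — for the Dirichlet and Neumann Laplacians on a p.c.f.\ self-similar set. These ingredients are well established in Kigami's framework (cf.\ the references collected in Section~\ref{subsec21}), and once invoked the rest of the argument is purely formal. An essentially equivalent, more streamlined route is to establish the dimension identity $\dim E(\lambda)=L+\dim E_D(\lambda)=L+\dim E_N(\lambda)$ independently, by directly constructing $L$ linearly independent eigenfunctions with prescribed traces (respectively, prescribed normal derivatives), after which surjectivity follows from rank--nullity combined with the injectivity step above.
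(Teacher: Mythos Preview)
Your proposal is correct and follows essentially the same route as the paper: kernel identification gives injectivity, and for surjectivity you subtract off a function with the prescribed boundary data (you use the harmonic extension $h_v$; the paper uses an arbitrary $v\in\mathscr{D}_\mu$ with $v|_{V_0}=\mathbf{a}$), reduce to an inhomogeneous Dirichlet problem, and solve via the eigenfunction expansion using $\lambda\notin\Lambda^D$. The only place the paper is more explicit is the regularity bootstrap---it checks $\sum_i \lambda_i^D\langle \tilde u,\varphi_i^D\rangle_\mu^2<\infty$ to place the series solution in $\mathscr{F}\subset C(K)$ and then in $\mathscr{D}_{D,\mu}$---which is exactly the ``main obstacle'' you flag and defer to Kigami's framework.
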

\begin{proof}
The linearity of $E(\lambda)$ is evident. It suffices to prove the statement for $\tau_\lambda^D$, since $\tau_\lambda^N$ is similar. For $\lambda\notin\Lambda^D$, we aim to prove that $\dim E(\lambda)\leq L$ and that $\tau_\lambda^D$ is a surjection.

Assume $\dim E(\lambda)>L$, noticing that $\tau_\lambda^D$ is linear, we have
\[\dim E_D(\lambda)=\dim\ker\tau_\lambda^D\geq\dim E(\lambda)-L>0,\]
hence $\lambda\in\Lambda^D$, contradicting  $\lambda\notin\Lambda^D$. Consequently, $\dim E(\lambda)\leq L$.

Fix $\lambda\notin\Lambda^D$, for each $\mathbf{a}\in\mathbb{R}^L$, we claim that the following problem has a solution $u\in\mathscr{D}_\mu$,
\begin{equation}\label{eqd1}
\begin{cases}
-\Delta_\mu u=\lambda u,\\
u|_{V_0}=\mathbf{a}.\\
\end{cases}
\end{equation}

Indeed, it is shown in \cite[Theorem 3.4.6, Corollary 3.4.7 and Theorem 3.7.9]{K4} that the Friedrichs extension of $-\Delta_\mu$ on $\mathscr{D}_{D,\mu}$, denoted by $H_D$, is a non-negative definite self-adjoint operator on $L^2(K,\mu)$, and its associated Dirichlet form is $(\mathscr{E},\mathscr{F}_0)$ with $\mathscr{F}_0=\{u\in\mathscr{F}:u|_{V_0}=\mathbf{0}\}$. Moreover, it has  compact resolvent with pure point spectrum  $\Lambda^D$. Choose some $v\in\mathscr{D}_\mu$ with $v|_{V_0}=\mathbf{a}$ and write $\tilde{u}=u-v$. Then, (\ref{eqd1}) can be rewritten as
\begin{equation}\label{eqd2}
\begin{cases}
H_D \tilde{u}-\lambda \tilde{u}=f,\\
\tilde{u}|_{V_0}=\mathbf{0},\\
\end{cases}
\end{equation}
where $f=\Delta_\mu v+\lambda v\in C(K)$. Since $\lambda\notin\Lambda^D$, the operator $H_D-\lambda$ is invertible and we have a solution $\tilde{u}\in L^2(K,\mu)$ to \eqref{eqd2}.

It is known in \cite[Page 133]{K4} that there exist $\varphi_i^D\in E_D(\lambda_i^D)$ such that $\{\varphi_i^D\}_{i\geq 1}$ is a complete orthonormal system for $L^2(K,\mu)$, so the solution $\tilde{u}$ has an expansion
\[\tilde{u}=\sum_{i=1}^\infty\langle \tilde{u},\varphi_i^D\rangle_\mu\varphi_i^D,\]
where $\langle \cdot,\cdot\rangle_\mu$ is the standard inner product in $L^2(K,\mu)$. Then (\ref{eqd2}) is equivalent to
\[(\lambda_i^D-\lambda)\langle \tilde{u},\varphi_i^D\rangle_\mu=\langle f,\varphi_i^D\rangle_\mu\quad \text{ for any } i\geq 1.\]
Therefore,
\[\tilde{u}=\sum_{i=1}^\infty\frac{\langle f,\varphi_i^D\rangle_\mu}{\lambda_i^D-\lambda}\varphi_i^D.\]

Since $\lambda\neq\lambda_i^D$ for all $i\geq 1$, we have $c_0:=\displaystyle \sup_{i\geq 1}\{\frac{\lambda_i^D}{(\lambda_i^D-\lambda)^2}\}<\infty$. Consequently, 
\[\mathscr{E}(\tilde{u},\tilde{u})=\langle H_D\tilde{u},\tilde{u}\rangle_\mu=\sum_{i=1}^\infty\lambda_i^D\langle\tilde{u},\varphi_i^D\rangle_\mu^2=\sum_{i=1}^\infty\frac{\lambda_i^D}{(\lambda_i^D-\lambda)^2}\langle f,\varphi_i^D\rangle_\mu^2\leq c_0\Vert f\Vert^2_{{L^2}(K,\mu)}<\infty,\]
so that $\tilde{u}\in\mathscr{F}\subset C(K)$. Thus,
\[H_D\tilde{u}=\lambda\tilde{u}+f\in C(K),\]
implying $\tilde{u}\in \mathscr{D}_{D,\mu}$, and therefore $u=\tilde{u}+v\in E(\lambda)$ is a solution to 
(\ref{eqd1}). So the claim holds.

From the claim, $\tau_\lambda^D$ is a surjection. $\tau_\lambda^D$ is also injective since it is linear and $\dim E(\lambda)\leq L$. This completes the proof.
\end{proof}

\noindent{\bf Remark.} As an immediate consequence of Lemma \ref{thm2.5}, 
$\mathrm{dim}E(\lambda)=L$ for $\lambda\notin\Lambda^D\cap\Lambda^N$. For $\lambda\notin \Lambda^D$, the map $\tau_\lambda^D$ establishes a one-to-one correspondence between functions in $E(\lambda)$ and their boundary values. Similarly, for $\lambda\notin \Lambda^N$, the map $\tau_\lambda^N$ establishes a one-to-one correspondence between functions in $E(\lambda)$ and their boundary Neumann derivatives. 

For $0<\lambda<\lambda_1^D$, by Lemma \ref{thm2.5}, $\tau_{\lambda}^D: E(\lambda)\to \mathbb R^L$ is invertible. Hence for such $\lambda$, by introducing 
\begin{equation}\label{defT}
T_\lambda^i(\mathbf a)=\tau_{\gamma^2_i\lambda}^D\Big((\tau_\lambda^D)^{-1}(\mathbf a)\circ F_i\Big)\quad \text{ for any }\mathbf a\in \mathbb R^L,
\end{equation}
it is direct to see that $T_\lambda^i:\mathbb R^L\to \mathbb R^L$ is a linear map.
Note that $T_\lambda^i$ may not be invertible.

For $0<\lambda<\lambda_1^D$, define
\begin{equation}\label{defab}
\begin{aligned}
&\mathbf{A}_\lambda=\{\mathbf{a}\in\mathbb{R}^L:(\tau_\lambda^D)^{-1}(\mathbf{a}) \text{ has at least one extreme set}\},\\
&\mathbf{B}_\lambda=\{\mathbf{a}\in\mathbb{R}^L:(\tau_\lambda^D)^{-1}(\mathbf{a})\text{ has exactly one extreme set }A \text{ with }A\cap V_1\neq\emptyset\}.\\
\end{aligned}
\end{equation}

Then we have the following criterion to verify condition (A). (Recall condition (A) in Section \ref{subsec11}).

\begin{proposition}\label{pro2}
The condition (A) is satisfied if and only if for every sufficiently small $\lambda>0$,
\begin{equation} \label{eq1}
\begin{aligned}
&\mathbf{A}_\lambda=\big(\bigcup_{i\in S}(T_\lambda^i)^{-1}(\mathbf{A}_{\gamma^2_i\lambda})\big)\cup\mathbf{B}_\lambda,\quad\text{with}\\
(T_\lambda^i&)^{-1}(\mathbf{A}_{\gamma^2_i\lambda}),\ i\in S\text{ and }\mathbf{B}_\lambda\text{ pairwise disjoint.}
\end{aligned}
\end{equation}
\end{proposition}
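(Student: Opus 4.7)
The plan rests on one topological dichotomy inherited from the self-similar decomposition $K=\bigcup_i F_iK$: since $F_iK\cap F_jK\subset V_1$ for $i\neq j$, every extreme set $A$ of $u=(\tau_\lambda^D)^{-1}(\mathbf{a})$, which is well defined for $0<\lambda<\lambda_1^D$, falls into exactly one of two types --- type (i), $A\subset F_i(K\setminus V_0)$ for a unique $i\in S$ and $A\cap V_1=\emptyset$; or type (ii), $A\cap V_1\neq\emptyset$. Under $F_i^{-1}$, type-(i) sets in cell $i$ correspond bijectively to extreme sets of $u\circ F_i\in E(\gamma_i^2\lambda)$ that avoid $V_0$, and by the very definition of $T_\lambda^i$ this bijection translates precisely to ``$\mathbf{a}\in(T_\lambda^i)^{-1}(\mathbf{A}_{\gamma_i^2\lambda})$ if and only if $u$ has at least one type-(i) extreme set in cell $i$''. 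I work throughout in the regime $\lambda<\lambda_1^D$, in which $\tau_\lambda^D$ is invertible, a property that persists under every rescaling $\gamma_w^2\lambda$ because $\gamma_w<1$.

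For the forward direction I assume (A) with threshold $\lambda_0$, take $0<\lambda<\lambda_0$, and fix $\mathbf{a}\in\mathbf{A}_\lambda$. Then $u$ has a single extreme set $A$. If $A$ is of type (i) with unique $i$, the bijection places $\mathbf{a}$ in $(T_\lambda^i)^{-1}(\mathbf{A}_{\gamma_i^2\lambda})$, while the uniqueness of $A$ prevents any $u\circ F_j$ ($j\neq i$) from having extreme sets and, since $A\cap V_1=\emptyset$, keeps $\mathbf{a}$ out of $\mathbf{B}^1_\lambda$. If instead $A$ is of type (ii), then $\mathbf{a}\in\mathbf{B}^1_\lambda$ directly; moreover a type-(ii) set cannot lift from an extreme set of any $u\circ F_j$ disjoint from $V_0$, so $\mathbf{a}\notin(T_\lambda^j)^{-1}(\mathbf{A}_{\gamma_j^2\lambda})$ for every $j$. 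These two sub-cases together deliver both the pairwise disjointness and the inclusion $\mathbf{A}_\lambda\subseteq\bigcup_i(T_\lambda^i)^{-1}(\mathbf{A}_{\gamma_i^2\lambda})\cup\mathbf{B}^1_\lambda$; the reverse inclusion is immediate since membership in any summand already forces $u$ to possess an extreme set.

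For the converse I assume (\ref{eq1}) for every $\lambda<\lambda_*$ and set $\lambda_0:=\lambda_*$. Given any $u\in E(\lambda)$ with $\mathbf{a}\in\mathbf{A}_\lambda$ and $\lambda<\lambda_0$, I iterate (\ref{eq1}): at each depth $k\geq 0$ the running point $\mathbf{a}_k$ (the boundary value of $u\circ F_{[\omega]_k}$) either lies in $\mathbf{B}^1_{\gamma_{[\omega]_k}^2\lambda}$, terminating the iteration at $k^*:=k$, or lies in a unique $(T^{i_{k+1}})^{-1}(\mathbf{A}_{\gamma_{[\omega]_{k+1}}^2\lambda})$, defining a unique next letter $i_{k+1}$ and thereby an address $\omega\in\Sigma$. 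Uniqueness at depth $j$ annihilates every sibling: $u\circ F_{[\omega]_j}\circ F_\ell$ has no extreme set for each $\ell\neq i_{j+1}$. Iterating Lemma \ref{pro1}(b) yields, for each $k$, the identity $N(u)=N(u\circ F_{[\omega]_k})+\tilde m_k$, where $\tilde m_k$ counts extreme sets of $u$ meeting $V_k\setminus V_0$; the non-$\mathbf{B}^1$ alternative at each intermediate depth $j<k$ further prohibits stray type-(ii) extreme sets from appearing inside $F_{[\omega]_j}K$ along $V_{j+1}\setminus V_j$ apart from those lying in the chosen subcell. Combined with the monotonicity $N(u\circ F_{[\omega]_{k+1}})\leq N(u\circ F_{[\omega]_k})$ furnished by Lemma \ref{pro1}(b), the terminal $\mathbf{B}^1$ event contributes exactly one extreme set and the non-terminating scenario collapses all extreme sets onto the single point $\Pi(\omega)$, in either case giving $N(u)\leq 1$ and hence (A).

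The forward direction reduces to an organised case split once the type (i)/(ii) dichotomy is in place. The main obstacle sits on the converse side: it is the bookkeeping that must certify that the intermediate constraints $\mathbf{a}_j\notin\mathbf{B}^1_{\gamma_{[\omega]_j}^2\lambda}$ together with the uniqueness of $i_{j+1}$ genuinely block any extraneous extreme set from surfacing at each fresh scale $V_{j+1}\setminus V_j$ inside the chosen cell; this interlocking of ``no sibling cells'' with ``no unique boundary-scale extremum'' at every depth is the technical heart of the argument.
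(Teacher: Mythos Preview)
Your proof is correct and follows essentially the same approach as the paper's: both hinge on the dichotomy that every extreme set of $u$ either lies strictly inside a single cell $F_i(K\setminus V_0)$ or meets $V_1$, identify $(T_\lambda^i)^{-1}(\mathbf{A}_{\gamma_i^2\lambda})$ with the set of boundary data producing an extreme set of the first type in cell $i$, and establish (\ref{eq1})$\Rightarrow$(A) by iterating down a uniquely determined cell address until either a $\mathbf{B}^1$-event or a single limit point is reached. The only cosmetic difference is that the paper argues (A)$\Rightarrow$(\ref{eq1}) via its contrapositive (failure of (\ref{eq1}) produces either $\mathbf a\in\mathbf B^2_\lambda$ or an overlap, each forcing $N(u)\ge 2$), whereas you argue it directly from the uniqueness of the extreme set.
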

\begin{proof}
Write
\begin{equation*}
\begin{aligned}
&\mathbf{A}_{\lambda,i}=\{\mathbf{a}\in\mathbb{R}^L:(\tau_\lambda^D)^{-1}(\mathbf{a})\text{ has at least one extreme set }A\subset F_iK\text{ with }A\cap V_1=\emptyset\},\\
&\mathbf{B}'_\lambda=\{\mathbf{a}\in\mathbb{R}^L:(\tau_\lambda^D)^{-1}(\mathbf{a})\text{ has at least two extreme sets }A\text{ with }A\cap V_1\neq\emptyset\}.\\
\end{aligned}
\end{equation*}
Then we have the natural decomposition
\[\mathbf{A}_\lambda=\big(\bigcup_{i\in S}(\mathbf{A}_{\lambda,i})\big)\cup\mathbf{B}_\lambda\cup\mathbf{B}'_\lambda.\]

If $\mathbf{a}\in\mathbf{A}_{\lambda,i}$ for some $i\in S$ and $(\tau_\lambda^D)^{-1}(\mathbf{a})$ has at least one extreme set $A$ with $A\subset F_iK$ and $A\cap V_1=\emptyset$, then $(\tau_\lambda^D)^{-1}(\mathbf{a})\circ F_i$ has at least one extreme set, so $\mathbf{a}\in(T_\lambda^i)^{-1}(\mathbf{A}_{\gamma^2_i\lambda})$. Conversely,  if $\mathbf{a}\in(T_\lambda^i)^{-1}(\mathbf{A}_{\gamma^2_i\lambda})$, then $T_\lambda^i(\mathbf{a})=\tau_{\gamma^2_i\lambda}^D\big((\tau_\lambda^D)^{-1}(\mathbf a)\circ F_i\big)\in\mathbf{A}_{\gamma^2_i\lambda}$, so $(\tau_\lambda^D)^{-1}(\mathbf{a})\circ F_i$ has at least one extreme set. It follows that $(\tau_\lambda^D)^{-1}(\mathbf{a})$ has at least one extreme set $A$ with $A\subset F_iK$ and $A\cap V_1=\emptyset$, that is, $\mathbf{a}\in \mathbf{A}_{\lambda,i}$. Therefore,  $\mathbf{A}_{\lambda,i}=(T_\lambda^i)^{-1}(\mathbf{A}_{\gamma^2_i\lambda})$.

Suppose that condition (\ref{eq1}) is satisfied (note that this implies $\mathbf{B}'_\lambda=\emptyset$), then for any $u=u_\lambda\in E(\lambda)$ with small $\lambda$, the extreme sets of $u$ must be in one of the following mutually exclusive cases:

(1). $\tau_\lambda^D(u)\notin\mathbf{A}_\lambda$, so $\#\mathrm{Extr}(u)=0$;

(2). $\tau_\lambda^D(u)\in\mathbf{B}_\lambda$, so $\#\mathrm{Extr}(u)=1$;

(3). $\tau_\lambda^D(u)\in(T_\lambda^i)^{-1}(\mathbf{A}_{\gamma^2_i\lambda})$
for exactly one $i\in S$, then $\#\mathrm{Extr}(u)=\#\mathrm{Extr}(u\circ F_i)\geq 1$.

In case (3), observing that $u\circ F_i\in E(\gamma^2_i\lambda)$, we proceed by analyzing the extreme sets of $u\circ F_i$  instead, and repeat the above procedure iteratively. There are two possibilities: one possibility is that there is some $w=w_1w_2\ldots w_m\in W_*$ such that  $u\circ F_w$ is in case (2), but $u\circ F_{w_1w_2\ldots w_k}$ is in case (3) for each $k<m$, and we obtain $\#\mathrm{Extr}(u)=\#\mathrm{Extr}(u\circ F_{w_1})=\cdots=\#\mathrm{Extr}(u\circ F_{w})=1$; the other possibility is that there exists a unique $\omega\in\Sigma$ such that $\{\Pi(\omega)\}$ is the only extreme set of $u$, and we still have $\#\mathrm{Extr}(u)=1$. This implies that condition (A) holds.

Conversely, suppose (\ref{eq1}) fails, then there exists $\lambda>0$ arbitrarily small and $\mathbf{a}\in\mathbb{R}^L$ such that $\mathbf{a}\in\mathbf{B}'_\lambda$ or $\mathbf{a}$ belongs to at least two of the sets $(T_\lambda^i)^{-1}(\mathbf{A}_{\gamma^2_i\lambda}),\ i\in S$ and $\mathbf{B}_\lambda$. This gives  $\#\mathrm{Extr}((\tau_\lambda^D)^{-1}(\mathbf{a}))\geq 2$ in both situations, hence condition (A) fails.
\end{proof}

\noindent{\bf Remark.} By Lemma \ref{pro1}-(a), $\mathbf{A}_\lambda$ and $\mathbf{B}_\lambda$ are cones, i.e. if $\mathbf{a}\in \mathbf{A}_\lambda\ (\text{or }\mathbf{B}_\lambda)$, then $k\mathbf{a}\in \mathbf{A}_\lambda \ (\text{or }\mathbf{B}_\lambda)$ for any $k\neq 0$. Let $\pi:\mathbb{R}^L\setminus\{\mathbf{0}\}\to\mathbb{R}\mathrm{P}^{L-1}$ be the canonical projection that maps each point in $\mathbb{R}^L\setminus\{\mathbf{0}\}$ to the line through the origin it generates. 
For $\mathbf{a},\mathbf{b}\in\mathbb{R}^L$, define $\mathbf{a}\sim\mathbf{b}$ if there exists $k\in\mathbb{R}\setminus\{0\}$ such that $\mathbf{a}=k\mathbf{b}$, then $\pi$ can be regarded as a bijection from $(\mathbb{R}^L\setminus\{\mathbf{0}\})/\sim$ to $\mathbb{R}\mathrm{P}^{L-1}$.
Write \[\mathcal{A}_\lambda=\pi(\mathbf{A}_\lambda),\quad \mathcal{B}_\lambda=\pi(\mathbf{B}_\lambda),\] and define
\begin{equation}\label{defabT}
\mathcal{T}_\lambda^i:\mathbb{R}\mathrm{P}^{L-1}\to\mathbb{R}\mathrm{P}^{L-1} \quad \text{by}\quad  \mathcal{T}_\lambda^i=\pi\circ T_\lambda^i\circ\pi^{-1}.
\end{equation}
Then, the condition (\ref{eq1}) is equivalent to
\begin{equation}\tag{\ref{eq1}*}\label{eq2}
\begin{aligned}
&\mathcal{A}_\lambda=\big(\bigcup_{i\in S}(\mathcal{T}_\lambda^i)^{-1}(\mathcal{A}_{\gamma^2_i\lambda})\big)\cup\mathcal{B}_\lambda,\quad\text{with}\\
(\mathcal{T}_\lambda^i&)^{-1}(\mathcal{A}_{\gamma^2_i\lambda}),\ i\in S\text{ and }\mathcal{B}_\lambda\text{ pairwise disjoint.}
\end{aligned}
\end{equation}
Consequently, condition (A) is equivalent to \eqref{eq2}. 

In Section \ref{sec4}, we will verify condition (A) through \eqref{eq2} for the Sierpinski gasket $\mathcal {SG}$ equipped with its canonical Laplacian.

\section{Eigenfunctions on the Sierpinski Gasket}\label{sec3}

From now on, we specialize to the Sierpinski gasket $\mathcal {SG}$ equipped with its canonical Laplacian, see \cite{K1, K4}. In this section we first briefly review the recipe of spectral decimation on $\mathcal{SG}$, then state two preparatory results, Theorems \ref{thm4} and \ref{thm5}, whose proofs are postponed to Sections \ref{sec4} and \ref{sec5}.

Let $S=\{1,2,3\}$ and $V_0=\{p_1,p_2,p_3\}$ be a set of vertices of an equilateral triangle in $\mathbb{R}^2$. Set $F_i(x)=(x-p_i)/2+p_i$ for $i\in S$. The {\it Sierpinski gasket} $\mathcal{SG}$ is the attractor of the i.f.s. $\{F_i\}_{i\in S}$. 
Denote $V_1\setminus V_0=\{p_{23},p_{31},p_{12}\}$ with $p_{ij}=F_ip_j=F_jp_i$ for $ij\in S_1:=\{23,31,12\}$, see Figure \ref{figure3}.

\begin{figure}[h]
\begin{center}
\includegraphics[width=5.5cm]{SG2.pdf}\hspace{2cm}
\includegraphics[width=5.35cm]{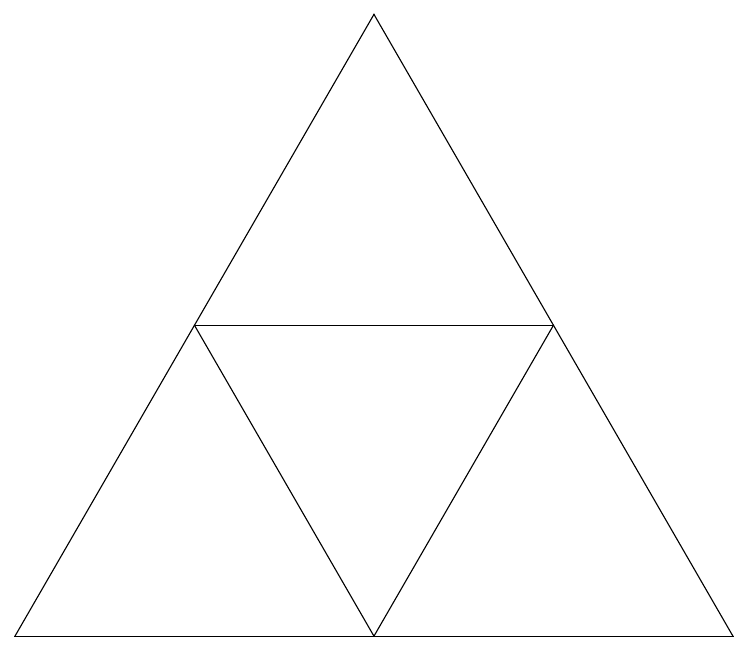}\\
\caption{The Sierpinski gasket $\mathcal{SG}$ and the set $V_1$.}
\label{figure3}
\end{center}
\begin{picture}(0,0) \thicklines
\put(105,172){$p_1$}
\put(23,35){$p_2$}
\put(185,35){$p_3$}
\put(55,107){$p_{12}$}
\put(149,107){$p_{31}$}
\put(103,31){$p_{23}$}
\end{picture}
\end{figure}

Let $D=\begin{pmatrix}
-2&1&1\\
1&-2&1\\
1&1&-2
\end{pmatrix}$ and $\mathbf{r}=(3/5,3/5,3/5)$, then $(D,\mathbf{r})$ is a regular harmonic structure on $\mathcal{SG}$. For $m\geq1$, write $p\sim_mq$ if $p\neq q\in V_m$ and there exists $w\in W_m$ such that $p,q\in F_wV_0$. Note that $\#\{q:q\sim_mp\}=4$ for $p\in V_m\setminus V_0$ and $\#\{q:q\sim_mp\}=2$ for $p\in V_0$. Define $\Delta_m:l(V_m)\to l(V_m)$ by
\begin{equation}\label{defdeltam}
(\Delta_m u)(p)=\sum_{q\sim_m p}(u(q)-u(p)),
\end{equation}
then $H_m=(5/3)^m\Delta_m$.

Denote $\mu$ as the standard self-similar measure on $\mathcal{SG}$ with $\mu_i=1/3$ for all $i\in S$. Recall (\ref{equation2.1}), it is easy to calculate that $\int_\mathcal{SG}\psi_{m,p}\mathrm{d}\mu=2/3^{m+1}$ for $p\in V_m\setminus V_0$ and $\int_\mathcal{SG}\psi_{m,p}\mathrm{d}\mu=1/3^{m+1}$ for $p\in V_0$, which gives 
\begin{equation}\label{defdelta}
\mu_{m,p}^{-1}(H_mu)(p)=\frac{3}{2}5^m(\Delta_mu)(p),\quad\text{for any }p\in V_m\setminus V_0.
\end{equation}

Denote by $\Delta$ (omit the subscript $\mu$ for simplicity) the associated Laplacian, and call it the canonical Laplacian on $\mathcal{SG}$. Denote by $\mathscr{D}$ the domain of $\Delta$. The Neumann derivative of $u\in\mathscr D$ at $p_i$ is
\begin{equation}\label{defdu}
\begin{aligned}
(\mathrm{d}u)_{p_i}&=\lim_{m\to\infty}-(H_mu)(p_i)\\
&=\lim_{m\to\infty}\Big(\frac{5}{3}\Big)^m\big(2u(p_i)-u(F_i^mp_j)-u(F_i^mp_k)\big).
\end{aligned}
\end{equation}

Additionally, we have $\gamma^2_i=r_i\mu_i=1/5$ for $i\in S$ and $d_S=\log 9/\log 5$, so that it is in the lattice case (recall the last paragraph in Section \ref{subsec21}).

\subsection{The spectral decimation method}

In this subsection, we recall the spectral decimation method on $\mathcal{SG}$ due to Shima and Fukushima. Details can be found in \cite{FS,Sh,St6}.

Let $\Phi(x)=x(5-x)$. Denote the two branches of $\Phi^{-1}$ by
\begin{equation*}
\varphi_{-1}(x)=\frac{1}{2}(5-\sqrt{25-4x}),\quad \varphi_1(x)=\frac{1}{2}(5+\sqrt{25-4x}) \quad\text{ on $(-\infty,25/4]$}.
\end{equation*}
Define
\begin{equation*}
\psi(x)=\frac{3}{2}\lim_{m\to\infty}5^m\varphi_{-1}^m(x),
\end{equation*}
where $\varphi_{-1}^m$ denotes the $m$-th iteration of $\varphi_{-1}$, so that $\psi$ is a strictly increasing analytic function on $(-\infty,25/4)$. In addition, $5\psi(\varphi_{-1}(x))=\psi(x)$.

For any $\eps=\eps_1\eps_2\cdots\eps_n\in\{-1,1\}^n$ of length $|\eps|:=n\geq 0$, set 
\[\varphi_\eps=\varphi_{\eps_n}\circ\varphi_{\eps_{n-1}}\circ\cdots\circ\varphi_{\eps_1}\]
with the convention that  $\varphi_\varnothing=\mathrm{id}$. Define
\begin{equation*}
\Psi(m,\eps,x)=5^{m+|\eps|}\psi\circ\varphi_\eps(x).
\end{equation*}

\begin{proposition}\label{thm31}(\cite[Lemma 2.1 and Proposition 2.2]{Sh})
Let $u\in E(\lambda)\setminus\{0\}$ for $\lambda\geq 0$.

(a). There exists a minimal integer $m_0\geq 1$, called the {\rm level of birth}, such that
\begin{equation*}
-\Delta_m u|_{V_m}=\lambda_m u|_{V_m} \text{\quad on }V_m\setminus V_0
\end{equation*}
holds for any $m\geq m_0$ with some $\lambda_m$ and

there exists $m_1>m_0$, called the {\rm level of fixation}, together with $\eps=\eps_1\eps_2\cdots\eps_n\in\{-1,1\}^n$ of length $n=|\eps|=m_1-m_0-1$ (with $\eps_n=1$ if $n\geq 1$) such that
\begin{equation*}
\lambda_{m+1}=
\begin{cases}
\varphi_{\eps_{m-m_0+1}}(\lambda_m),\quad &\text{if \ } m_0\leq m<m_1-1,\\
\varphi_{-1}(\lambda_m), \quad &\text{if \ } m\geq m_1-1.\\
\end{cases}
\end{equation*}
In particular, $\lambda_m\notin\{2,5,6\}$ for all $m>m_0$;

and the eigenvalue $\lambda$ is recovered via
\begin{equation*}
\lambda=\frac{3}{2}\lim_{m\to\infty}5^m\lambda_m=\Psi(m_0,\eps,\lambda_{m_0}).
\end{equation*}

(b). Conversely, for $m\geq 1$, if $-\Delta_{m+1} u|_{V_{m+1}}=\lambda_{m+1} u|_{V_{m+1}}$ on $V_{m+1}\setminus V_0$ with some $\lambda_{m+1}\notin\{2,5,6\}$, then $-\Delta_{m} u|_{V_{m}}=\lambda_{m} u|_{V_{m}}$ on $V_m\setminus V_0$ with $\lambda_{m}=\Phi(\lambda_{m+1})$.

(c). The $\lambda$-eigenfunction $u$ is uniquely determined by $u|_{V_{m_0}}$ through the extension rule:
\begin{equation}\label{eqdeci}
u(p_{ij}^w)=\frac{(4-\lambda_{m+1})(u(p_i^w)+u(p_j^w))+2u(p_k^w)}{(2-\lambda_{m+1})(5-\lambda_{m+1})}
\end{equation}
for each $w\in W_m$ with $m\geq m_0$ and distinct $i,j,k\in S$, where $p_i^w=F_wp_i$ and $p_{ij}^w=F_wp_{ij}$.
\end{proposition}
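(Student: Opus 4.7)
The proof rests on the algebraic core of spectral decimation: within each cell $F_w\mathcal{SG}$, $w\in W_m$, the eigenvalue equation at the three new vertices of $V_{m+1}\setminus V_m$ can be solved explicitly for those three values in terms of the three corner values, and substituting the solution back into the equations at the corners produces the polynomial identity $\lambda_m=\Phi(\lambda_{m+1})$. My plan is to establish the extension rule (\ref{eqdeci}) first (proving (c)), then derive the recursion to obtain (b), and finally assemble (a) from this recursion together with the definition (\ref{defdelta}) of the continuous Laplacian as a scaled limit.

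To derive (\ref{eqdeci}), I fix $w\in W_m$ with $m\geq m_0$ and write, via (\ref{defdeltam}), the three equations $-\Delta_{m+1}u(p_{ij}^w)=\lambda_{m+1}u(p_{ij}^w)$ for $ij\in S_1$. Each $p_{ij}^w$ has four $\sim_{m+1}$-neighbors: the two corners $p_i^w,p_j^w$ of the same child cell, and two further vertices which, inside $F_w\mathcal{SG}$, coincide with the other two members of $\{p_{23}^w,p_{31}^w,p_{12}^w\}$. Hence the three equations close into a $3\times 3$ linear system for the new-vertex values whose right-hand side depends linearly only on $(u(p_1^w),u(p_2^w),u(p_3^w))$. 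A short computation shows that the determinant equals a nonzero multiple of $(2-\lambda_{m+1})(5-\lambda_{m+1})$, explaining the exclusion $\lambda_{m+1}\notin\{2,5\}$; Cramer's rule then delivers the extension formula (\ref{eqdeci}).

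For (b), I substitute the values of $u(p_{ij}^w)$ given by (\ref{eqdeci}) into $(\Delta_{m+1}u)(p)$ at a corner $p=p_i^w\in V_m\setminus V_0$. The sum over the four level-$(m+1)$ neighbors recombines, after clearing the denominator $(2-\lambda_{m+1})(5-\lambda_{m+1})$, into a multiple of $(\Delta_m u)(p)$ plus a multiple of $u(p)$. Collecting terms one obtains
\[
(\Delta_m u)(p)=-\lambda_{m+1}(5-\lambda_{m+1})\,u(p),
\]
so $\lambda_m=\Phi(\lambda_{m+1})$, proving (b). The exclusion $\lambda_{m+1}\neq 6$ reflects the fact that $\Phi(6)=-6$ is the well-known degenerate value at which the recursion forces $u|_{V_m}\equiv 0$, giving rise to pre-localized eigenfunctions rather than a genuine level-$m$ eigenfunction. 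Conversely, given the level-$m$ equation, the identity $\Phi(\lambda_{m+1})=\lambda_m$ forces $\lambda_{m+1}\in\{\varphi_{-1}(\lambda_m),\varphi_{1}(\lambda_m)\}$, which determines each sign $\eps_k\in\{-1,1\}$.

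For (a), I invoke (\ref{defdelta}): $-\Delta u=\lambda u$ means $\tfrac{3}{2}5^m(\Delta_m u)(p)\to\lambda u(p)$ uniformly on $V_m\setminus V_0$. Combining this with the extension rule of (c), one checks that from some smallest level $m_0$ on, the discrete equation $-\Delta_m u|_{V_m}=\lambda_m u|_{V_m}$ holds exactly on $V_m\setminus V_0$; this defines the level of birth. The resulting sequence obeys $\lambda_{m+1}\in\{\varphi_{-1}(\lambda_m),\varphi_{1}(\lambda_m)\}$, but since $\varphi_{1}(\lambda_m)\geq 5/2$ while the requirement $\tfrac{3}{2}5^m\lambda_m\to\lambda<\infty$ forces $\lambda_m\to 0$, only finitely many $+1$'s are admissible, producing the level of fixation $m_1=m_0+n+1$ and the sign string $\eps=\eps_1\cdots\eps_n$. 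Passing to the limit,
\[
\lambda=\tfrac{3}{2}\lim_{m\to\infty}5^m\lambda_m=5^{m_0+n}\psi(\varphi_\eps(\lambda_{m_0}))=\Psi(m_0,\eps,\lambda_{m_0}),
\]
using $\psi(x)=\tfrac{3}{2}\lim_{k\to\infty}5^k\varphi_{-1}^k(x)$. The principal obstacle is the $3\times 3$ inversion and the subsequent substitution extracting the polynomial $\Phi$; once that elementary but unenlightening computation is pinned down, (b) and (a) follow from tracking the two branches of $\Phi^{-1}$ and exploiting the attracting fixed point $0$ of $\varphi_{-1}$.
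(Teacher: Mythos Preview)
The paper does not prove this proposition; it is quoted as a known result from Shima (see the citation in the statement header), so there is no in-paper proof to compare against. Your sketch reproduces the standard spectral-decimation argument that appears in the cited references and in Strichartz's tutorial, and the outline is correct.

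Two small corrections of detail. First, the determinant of the $3\times 3$ system for the new-vertex values is $(2-\lambda_{m+1})(5-\lambda_{m+1})^2$, not a generic nonzero multiple of $(2-\lambda_{m+1})(5-\lambda_{m+1})$; the repeated factor $5-\lambda_{m+1}$ is what makes $\lambda_{m+1}=5$ genuinely exceptional (the system becomes rank-one rather than rank-two). Second, your explanation of the exclusion $\lambda_{m+1}=6$ is slightly off: $\Phi(6)=-6$ is not the obstruction per se. The point is that when you substitute the extension formula into $(\Delta_{m+1}u)(p)$ at a corner $p\in V_m\setminus V_0$, a factor $(6-\lambda_{m+1})$ multiplies the combination $\sum_{q\sim_m p}u(q)$, so at $\lambda_{m+1}=6$ the level-$m$ corner values are not recoverable from the level-$(m+1)$ equation and (b) fails; this is exactly the mechanism by which the $6$-series eigenfunctions acquire their level of birth. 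With these adjustments your argument is the standard one.
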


The following result gives the Dirichlet and Neumann spectra of $\mathcal{SG}$.

\begin{proposition}\label{thm32}(\cite[Theorem 2.1 and Theorem 3.1]{Sh})
If $u\in E_D(\lambda)\setminus\{0\}$ (or $E_N(\lambda)\setminus\{0\}$), then there exists $m_0$ as its level of birth and $\eps=\eps_1\eps_2\cdots\eps_n\in\{-1,1\}^n$ of length $n\geq 0$ (with $\eps_n=1$ if $n\geq 1$) such that

The Dirichlet case: for $u\in E_D(\lambda)$,

(D2). $m_0=1,\ \lambda_{m_0}=2,\ \lambda=\Psi(1,\eps,2)$, and $\dim E_D(\lambda)=1$;

(D5). $m_0\geq 1,\ \lambda_{m_0}=5,\ \lambda=\Psi(m_0,\eps,5)$, and $\dim E_D(\lambda)=(3^{m_0-1}+3)/2$;

(D6). $m_0\geq 2,\ \lambda_{m_0}=6,\ \lambda_{m_0+1}=3,\ \lambda=\Psi(m_0+1,\eps,3)$, and $\dim E_D(\lambda)=(3^{m_0}-3)/2$;

The Neumann case: for $u\in E_N(\lambda)$,

(N0). $m_0=1,\ \lambda_{m_0}=0,\ \lambda=0$, and $\dim E_N(\lambda)=1$;

(N5). $m_0\geq 2,\ \lambda_{m_0}=5,\ \lambda=\Psi(m_0,\eps,5)$, and $\dim E_N(\lambda)=(3^{m_0-1}-1)/2$;

(N6). $m_0\geq 1,\ \lambda_{m_0}=6,\ \lambda_{m_0+1}=3,\ \lambda=\Psi(m_0+1,\eps,3)$, and $\dim E_N(\lambda)=(3^{m_0}+3)/2$; 

(N6'). $m_0=1,\ \lambda_{m_0}=3,\ \lambda=\Psi(1,\eps,3)$, and $\dim E_N(\lambda)=2$.
\end{proposition}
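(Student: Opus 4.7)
The plan is to derive the classification directly from the spectral decimation machinery of Proposition~\ref{thm31}. For any nontrivial $u\in E_D(\lambda)\cup E_N(\lambda)$, part (a) supplies a level of birth $m_0$, an $\eps$-sequence, and a chain $\{\lambda_m\}_{m\geq m_0}$ with $\lambda=\Psi(m_0,\eps,\lambda_{m_0})$. The task is then to enumerate the admissible triples $(m_0,\eps,\lambda_{m_0})$ compatible with each boundary condition and to count dimensions for each one.

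The first step would be to pin down the admissible initial eigenvalue $\lambda_{m_0}$. For $m_0\geq 2$, applying Proposition~\ref{thm31}(b) contrapositively shows that $u|_{V_{m_0-1}}$ would again be an eigenfunction with eigenvalue $\Phi(\lambda_{m_0})$ unless $\lambda_{m_0}\in\{2,5,6\}$, so minimality of $m_0$ forces $\lambda_{m_0}\in\{2,5,6\}$. When $\lambda_{m_0}=6$, the equation $\Phi(\lambda_{m_0+1})=6$ yields $\lambda_{m_0+1}\in\{2,3\}$; since the forward decimation requires $\lambda_{m_0+1}\notin\{2,5,6\}$, one is forced to $\lambda_{m_0+1}=3$, which is exactly the extra clause in (D6) and (N6). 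For $m_0=1$, I would solve the discrete eigenvalue problem on $V_1$ by hand: with Dirichlet boundary values zero, the $3\times 3$ matrix on $V_1\setminus V_0$ has eigenvalues $2$ (simple, constant vector) and $5$ (double, mean-zero vectors), producing (D2) and (D5) at $m_0=1$. For Neumann, keeping the boundary values on $V_0$ free and demanding the limit in \eqref{defdu} vanish gives the constant eigenfunction $\lambda=0$ (case (N0)) together with the additional level-$1$ eigenvalue $\lambda_1=3$ compatible with the Neumann constraint, yielding the exceptional case (N6').

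Next, for each admissible triple I would compute $\dim E_D(\lambda)$ or $\dim E_N(\lambda)$ by counting the space of functions on $V_{m_0}$ that satisfy the discrete eigenvalue equation on $V_{m_0}\setminus V_0$ with eigenvalue $\lambda_{m_0}$ and the prescribed boundary condition on $V_0$; Proposition~\ref{thm31}(c) then extends each such choice uniquely to a global eigenfunction via the $\eps$-sequence, so the two counts agree. The counting is inductive in $m_0$ using the decomposition $V_{m_0}=\bigcup_{w\in W_{m_0}}F_wV_0$: for $\lambda_{m_0}=5$ each level-$m_0$ cell contributes one ``mean-zero'' degree of freedom as in the $m_0=1$ model calculation, while for $\lambda_{m_0}=6$ the forced choice $\lambda_{m_0+1}=3$ brings in pre-localized eigenfunctions supported in single cells of $V_{m_0}$, and these two families combine to yield the closed-form dimensions stated.

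The main obstacle I expect is the careful dimension bookkeeping in the $m_0\geq 2$ cases and in distinguishing Dirichlet from Neumann: the Neumann constraint is encoded only through the limit in \eqref{defdu}, so it couples the level-$m_0$ data to the entire $\eps$ tail rather than being a clean linear condition at a single level. This is what produces the fine arithmetic discrepancies between the four formulas $(3^{m_0-1}+3)/2$, $(3^{m_0-1}-1)/2$, $(3^{m_0}-3)/2$ and $(3^{m_0}+3)/2$ appearing in the parallel cases (D5), (N5), (D6), (N6).
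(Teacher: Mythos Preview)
The paper does not give its own proof of this proposition: it is stated with an explicit citation to \cite[Theorem 2.1 and Theorem 3.1]{Sh} and is used as a black box thereafter. So there is no ``paper's proof'' to compare your proposal against.

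That said, your outline is essentially the argument in Shima's original paper: use Proposition~\ref{thm31}(b) contrapositively to force $\lambda_{m_0}\in\{2,5,6\}$ at the birth level when $m_0\geq 2$, handle $m_0=1$ by a direct diagonalization on $V_1$, and then count the discrete eigenspaces at level $m_0$ and lift via Proposition~\ref{thm31}(c). Two places where your sketch would need more care if you actually wrote it out: first, you need to explain why $\lambda_{m_0}=2$ does \emph{not} occur for $m_0\geq 2$ in either boundary case (your minimality argument only gives $\lambda_{m_0}\in\{2,5,6\}$, and $2$ is not automatically excluded); this requires a separate look at the discrete $\lambda_m=2$ eigenspace at higher levels. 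Second, your worry that the Neumann condition ``couples the level-$m_0$ data to the entire $\eps$ tail'' is overstated: the discrete Neumann condition at level $m$ (vanishing of $-(H_m u)(p)$ for $p\in V_0$) is preserved under the extension rule \eqref{eqdeci}, so it suffices to count discrete Neumann eigenfunctions at level $m_0$ and the dimension counts become a clean linear-algebra exercise, not an infinite-tail computation.
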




\begin{remark}\label{re33}
From the above propositions, it is direct to verify that

(1). $\lambda_1^D=5\psi(2)=\psi(6)$, $\lambda_1^N=0$ and $\lambda_2^N=5\psi(3)$;

(2). for any $u\in E(\lambda)\setminus\{0\}$ with $0<\lambda<\lambda_1^D$, we have $m_0=1, \eps=\varnothing$ and $\lambda_m=\psi^{-1}(5^{-m}\lambda)\in(0,2)$ for each $m\geq 1$.
\end{remark}







\subsection{Statements of Theorems \ref{thm4} and \ref{thm5}}

First, let us focus on eigenfunctions belonging to small eigenvalues. Since $\lambda_1^D=5\psi(2)<5\psi(3)=\lambda_2^N$, we have $\lambda\notin\Lambda^D\cup\Lambda^N$ for $0<\lambda<\lambda_1^D$. Therefore, by Lemma \ref{thm2.5}, for such $\lambda$, both $\tau_\lambda^D$ and  $\tau_\lambda^N$ (recall \eqref{equationtau}) are invertible, hence $\tau_\lambda^D\circ(\tau_\lambda^N)^{-1}:\mathbb R^3\to\mathbb R^3$ is a linear bijection.

Denote
\begin{equation*}
\mathbf{C}=\{(a_1,a_2,a_3)\in\mathbb{R}^3:\text{either }a_i>0\text{ for all }i, \text{ or }a_i<0\text{ for all }i\}
\end{equation*}
and $\mathbf{C}_\lambda=(\tau_\lambda^D\circ(\tau_\lambda^N)^{-1})(\mathbf{C})$. Obviously both $\mathbf{C}$ and $\mathbf{C}_\lambda$ are cones.


For the projective plane $\mathbb{R}\mathrm{P}^2$, we take the decomposition $\mathbb{R}\mathrm{P}^2=\mathbb{R}^2\cup L_\infty$, where $L_\infty:=\mathbb{R}\mathrm{P}^2\setminus\mathbb{R}^2$ denote the line at infinity of $\mathbb{R}\mathrm{P}^2$.

Define $\pi:\mathbb{R}^3\setminus\{\mathbf{0}\}\to\mathbb{R}\mathrm{P}^2$ by
\begin{equation}\label{defpi}
\pi(\mathbf{x})=
\begin{cases}
(\mathbf{1}^\mathrm{t}\mathbf{x})^{-1}(Q^\mathrm{t}\mathbf{x}),\quad &\text{if \ } \mathbf{1}^\mathrm{t}\mathbf{x}\neq 0,\\
[Q^\mathrm{t}\mathbf{x}]_\infty, \quad &\text{if \ } \mathbf{1}^\mathrm{t}\mathbf{x}=0,\\
\end{cases}
\end{equation}
where $\mathbf{1}=(1,1,1)^\mathrm{t}$, $\displaystyle Q^\mathrm{t}=\frac{1}{2}\begin{pmatrix}
0&-\sqrt{3}&\sqrt{3}\\
2&-1&-1
\end{pmatrix}$, and $[\boldsymbol{\xi}]_\infty\in L_\infty$ denotes the point at infinity in the direction $\boldsymbol{\xi}$. Note that $\pi$ maps $\mathbf{1}$ to the origin $\boldsymbol{\theta}$ in $\mathbb R\mathrm{P}^2$, and sends 
\begin{equation*}
(1,0,0)^\mathrm t\mapsto(0,1)^\mathrm t, \quad (0,1,0)^\mathrm t \mapsto (-\frac{\sqrt{3}}2,-\frac12)^\mathrm t, \quad (0,0,1)^\mathrm t \mapsto (\frac{\sqrt{3}}2,-\frac12)^\mathrm t.
\end{equation*}

For $\mathbf{A}_\lambda,\mathbf{B}_\lambda$ defined in (\ref{defab}) for $\mathcal{SG}$, write $\mathcal{A}_\lambda=\pi(\mathbf{A}_\lambda),\ \mathcal{B}_\lambda=\pi(\mathbf{B}_\lambda)$. Write $\mathcal{C}_\lambda=\pi(\mathbf{C}_\lambda)$, and $\mathcal{T}_\lambda^i=\pi\circ T_\lambda^i\circ\pi^{-1}$.
For simplicity, write $\tau=\pi\circ\tau^D$, and for $\alpha\in(0,6)$, denote by
\begin{equation}\label{defmd}
\mathcal{D}_\alpha=\Big\{(\xi^{(1)},\xi^{(2)})^\mathrm{t}\in\mathbb{R}\mathrm{P}^2:
\frac{\alpha}{2(6-\alpha)}>\xi^{(2)}>\sqrt{3}|\xi^{(1)}|-\frac{\alpha}{6-\alpha}\Big\}
\end{equation}
the open equilateral triangle centered at $\boldsymbol{\theta}$ with vertices
\begin{equation*}
\boldsymbol{\zeta}_{\alpha,23}=\frac{\alpha}{6-\alpha}\begin{pmatrix}0\\-1\end{pmatrix},\quad \boldsymbol{\zeta}_{\alpha,31}=\frac{\alpha}{2(6-\alpha)}\begin{pmatrix}\sqrt{3}\\1\end{pmatrix},\quad
\boldsymbol{\zeta}_{\alpha,12}=\frac{\alpha}{2(6-\alpha)}\begin{pmatrix}-\sqrt{3}\\1\end{pmatrix}.
\end{equation*}
Moreover, let $\mathcal{L}_{\alpha,ij}$ denote the open line segment joining $\boldsymbol{\theta}$ and $\boldsymbol{\zeta}_{\alpha,ij}$, and $\mathcal{G}_{\alpha,i}$ denote the open triangle with vertices $\boldsymbol{\theta},\ \boldsymbol{\zeta}_{\alpha,ki},\ \boldsymbol{\zeta}_{\alpha,ij}$ for distinct $i,j,k\in S$.

The next theorem is one of the main results in this paper, which gives explicit expressions for $\mathcal{A}_\lambda$ and $\mathcal{B}_\lambda$.

\begin{theorem}\label{thm4}
For $0<\lambda<\lambda_1^D$, we have
\begin{equation}\label{eq3}
\begin{aligned}
\mathcal{A}_\lambda=\ & \mathcal{C}_\lambda=\mathcal{D}_{\psi^{-1}(\lambda)},\quad \mathcal{B}_\lambda=\big(\bigcup_{ij\in S_1}\mathcal{L}_{\psi^{-1}(\lambda),ij}\big)\cup\{\boldsymbol{\theta}\},\quad\text{and}\\
&(\mathcal{T}_\lambda^i)^{-1}(\mathcal{A}_{5^{-1}\lambda})=
\mathcal{G}_{\psi^{-1}(\lambda),i}\quad\text{for }i\in S.
\end{aligned}
\end{equation}
In particular, condition (A) holds.
\end{theorem}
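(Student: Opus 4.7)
The plan is to carry out everything explicitly using the $S_3$-symmetry of $\mathcal{SG}$ and the spectral decimation recurrence, then glue the pieces through the equivalent form \eqref{eq2} of Proposition \ref{pro2}. Since $\tau_\lambda^N\circ(\tau_\lambda^D)^{-1}$ is symmetric and $S_3$-equivariant, it has the form $\mu_1(\lambda)J/3+\mu_2(\lambda)(I-J/3)$ in the decomposition $\mathbb{R}^3=\mathbb{R}\mathbf{1}\oplus\mathbf{1}^\perp$, so $\mathbf{C}_\lambda$ is an $S_3$-symmetric triangular cone whose projective vertices are determined by the condition that two Neumann derivatives vanish. I would identify these vertices by guess-and-verify: the eigenfunction with $u|_{V_0}=(2-\alpha,2,2)$, $\alpha=\psi^{-1}(\lambda)$, satisfies via \eqref{eqdeci} (using the identity $(4-\lambda_1)(4-\alpha)+4=(2-\lambda_1)^2(5-\lambda_1)$) $u(p_{23})=2$ and $u(p_{31})=u(p_{12})=2-\lambda_1$, so $u\circ F_2|_{V_0}=(2-\lambda_1,2,2)$ is the same pattern at eigenvalue $\lambda/5$ (since $\psi^{-1}(\lambda/5)=\lambda_1$). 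Combined with the Neumann matching condition at the shared vertex $p_{23}$ and the $p_2\leftrightarrow p_3$ symmetry, this self-similar fixed-point structure forces $(\mathrm{d}u)_{p_2}=(\mathrm{d}u)_{p_3}=0$; computing $\pi(2-\alpha,2,2)=\boldsymbol{\zeta}_{\alpha,23}$ and invoking $S_3$-symmetry for the other two vertices yields $\mathcal{C}_\lambda=\mathcal{D}_\alpha$.

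For $\mathcal{A}_\lambda=\mathcal{C}_\lambda$: the inclusion $\mathcal{C}_\lambda\subseteq\mathcal{A}_\lambda$ comes from a maximum-principle argument dual to the Remark after Definition \ref{def1} --- if all $(\mathrm{d}u)_{p_i}$ share a sign (say positive), then $\min_K u$ cannot lie on $V_0$ (else that Neumann derivative would be $\leq 0$), so an interior extreme set exists. The reverse inclusion I would handle by a self-similar reduction: compute $T_\lambda^i$ directly from \eqref{eqdeci} (its determinant $(6-\lambda_1)/[(2-\lambda_1)(5-\lambda_1)^2]$ is positive for $\lambda_1\in(0,2)$) and check that $\mathcal{T}_\lambda^i$ is a projective bijection sending the vertices of $\mathcal{G}_{\alpha,i}$ to the vertices of $\mathcal{D}_{\lambda_1}$; the key computations are $T_\lambda^1(1,1,1)=(1,2/(2-\lambda_1),2/(2-\lambda_1))$ (giving $\mathcal{T}_\lambda^1(\boldsymbol{\theta})=\boldsymbol{\zeta}_{\lambda_1,23}$) and $T_\lambda^1(2,2-\alpha,2)=(2,2-\lambda_1,2)$ (giving $\mathcal{T}_\lambda^1(\boldsymbol{\zeta}_{\alpha,31})=\boldsymbol{\zeta}_{\lambda_1,31}$), both from the same identity above. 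Under the induction hypothesis $\mathcal{A}_{\lambda/5}=\mathcal{D}_{\lambda_1}$, for $u|_{V_0}$ outside $\mathcal{D}_\alpha$ each trace $u\circ F_i|_{V_0}$ falls outside $\mathcal{A}_{\lambda/5}$, so no extreme set sits in an open cell, and a separate check at $V_1\setminus V_0$ eliminates the remaining cases.

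On the open median $\mathcal{L}_{\alpha,ij}$ one has $u_i=u_j$ (projectively), and the resulting $\mathbb{Z}/2$-symmetry forces the unique extreme set to be anchored at $p_{ij}\in V_1\setminus V_0$; $\boldsymbol{\theta}$ corresponds to the fully $S_3$-symmetric eigenfunction, whose unique extreme set is a connected $S_3$-invariant subset touching the three midpoints. With $\mathcal{A}_\lambda=\mathcal{D}_\alpha$, $\mathcal{B}_\lambda^1=\bigcup_{ij}\mathcal{L}_{\alpha,ij}\cup\{\boldsymbol{\theta}\}$, and $(\mathcal{T}_\lambda^i)^{-1}(\mathcal{A}_{\lambda/5})=\mathcal{G}_{\alpha,i}$ in hand, these three pieces manifestly form a disjoint partition of $\mathcal{D}_\alpha$, so \eqref{eq2} holds and condition (A) follows by Proposition \ref{pro2}. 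The main obstacle will be the reverse inclusion $\mathcal{A}_\lambda\subseteq\mathcal{C}_\lambda$ and the correct setup of the self-similar induction (avoiding circularity and correctly treating the boundary behavior near the vertices and medians), which rests essentially on the positivity of all the coefficients in \eqref{eqdeci} on $\lambda_1\in(0,2)$ to cascade the ``no interior extremum'' property across all scales.
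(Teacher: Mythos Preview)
Your overall architecture matches the paper's: compute $\mathcal{C}_\lambda=\mathcal{D}_{\psi^{-1}(\lambda)}$ explicitly, establish $\mathcal{A}_\lambda=\mathcal{C}_\lambda$ by two inclusions, identify $(\mathcal{T}_\lambda^i)^{-1}(\mathcal{D}_{\lambda_1})=\mathcal{G}_{\alpha,i}$ via the projective action of $P_{\lambda_1}^i$, and read off \eqref{eq2}. Your identification of $\mathcal{C}_\lambda$ through the self-similar fixed-point $(2-\alpha,2,2)\mapsto(2-\lambda_1,2,2)$ is a nice alternative to the paper's direct computation (their Lemma~4.3 shows $(\mathrm{d}u)_{p_i}=0\Leftrightarrow(4-\lambda_0)u(p_i)=2(u(p_j)+u(p_k))$ and then locates the three boundary lines of $\mathcal{D}_\alpha$). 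The inclusion $\mathcal{C}_\lambda\subseteq\mathcal{A}_\lambda$ via Neumann signs is the same in both.

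The genuine gap is exactly where you flag it: the ``self-similar induction'' for $\mathcal{A}_\lambda\subseteq\mathcal{C}_\lambda$ is circular, because $\lambda$ ranges over a continuum and there is no base case. You cannot assume $\mathcal{A}_{\lambda/5}=\mathcal{D}_{\lambda_1}$ to deduce $\mathcal{A}_\lambda=\mathcal{D}_\alpha$. The paper does \emph{not} use induction here. Instead it introduces an auxiliary region $\mathcal{M}_{\lambda,ij}=\{\tau(u):u(p_{ij})\text{ strictly dominates }u|_{V_0}\}$, computes it explicitly (an open triangle with $\mathcal{L}_{\alpha,ij}\subset\mathcal{M}_{\lambda,ij}\subset\mathcal{C}_\lambda$ and $\partial\mathcal{M}_{\lambda,ij}\setminus\{\boldsymbol{\zeta}_{\alpha,ij}\}\subset\mathcal{C}_\lambda$), and proves a direct necessary condition (their Proposition~4.7): any extreme set $A$ forces $\tau(u\circ F_w)\in\mathcal{M}_{5^{-|w|}\lambda,ij}\cup\partial\mathcal{M}_{5^{-|w|}\lambda,ij}$ (if $A$ is a non-dyadic singleton) or $\tau(u\circ F_w)\in\mathcal{L}_{\psi^{-1}(5^{-|w|}\lambda),ij}\cup\{\boldsymbol{\theta}\}$ (if $A$ meets $V_*$) for some $w$. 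Since $\tau(u)\notin\mathcal{C}_\lambda$ cascades to $\tau(u\circ F_w)\notin\mathcal{C}_{5^{-|w|}\lambda}\cup\{\boldsymbol{\zeta}_{\lambda_{|w|},ij}\}$ for \emph{all} $w$ (pure iteration of the projective map, no induction hypothesis needed), these necessary conditions are violated and $N(u)=0$. You will also need a separate argument at the three corner points $\boldsymbol{\zeta}_{\alpha,ij}$ (where $u$ is constant on the edge $\overline{p_ip_j}$, the paper's Lemma~4.5), and for $\mathcal{B}_\lambda^1$ the symmetry argument alone does not show the extreme set is the \emph{singleton} $\{p_{ij}\}$ rather than a larger set containing it --- the paper extracts this from an explicit formula for $u(p_{23})-u(F_2F_3^{m-1}p_l)$ along the approach to $p_{23}$.
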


\begin{figure}[h]
\begin{center}
\includegraphics[width=9cm]{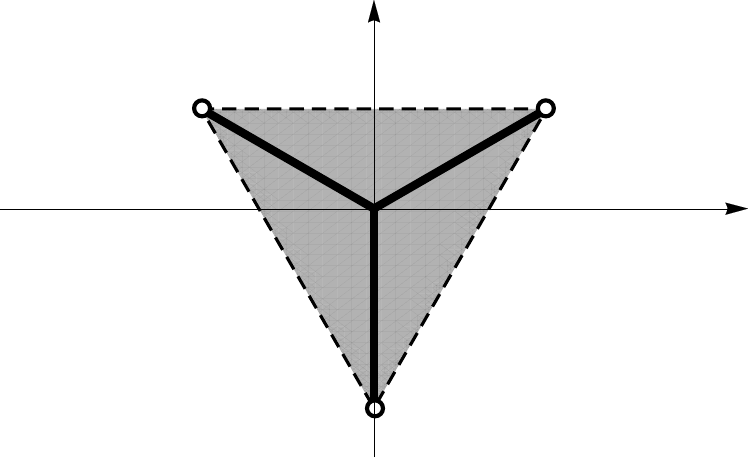}\\
\caption{$\mathcal{A}_\lambda$ (the shaded open equilateral triangle), $\mathcal{B}_\lambda$ (the thickened Y-shaped line segments, with endpoints removed), and $\mathcal{G}_{i}:=\mathcal{G}_{\psi^{-1}(\lambda),i}=(\mathcal{T}_\lambda^i)^{-1}\mathcal{A}_{5^{-1}\lambda}$, $i\in S$ (the three small open triangles).}
\label{figure4}
\end{center}
\begin{picture}(0,0) \thicklines
\put(-5,167){$\mathcal{G}_{1}$}
\put(-25,145){$\mathcal{G}_{2}$}
\put(7,127){$\mathcal{G}_{3}$}
\put(123,160){$\xi^{(1)}$}
\put(7,217){$\xi^{(2)}$}
\put(3,140){$\boldsymbol{\theta}$}
\put(5,75){$\boldsymbol{\zeta}_{\psi^{-1}(\lambda),23}$}
\put(65,185){$\boldsymbol{\zeta}_{\psi^{-1}(\lambda),13}$}
\put(-73,193){$\boldsymbol{\zeta}_{\psi^{-1}(\lambda),12}$}
\end{picture}
\end{figure}

\noindent{\bf Remark 1.} As shown in Figure \ref{figure4}, $\mathcal{A}_\lambda=\big(\bigcup_{i\in S}(\mathcal{T}_\lambda^i)^{-1}(\mathcal{A}_{5^{-1}\lambda})\big)\cup\mathcal{B}_\lambda$ is a disjoint union, which gives \eqref{eq2}. Consequently, by Proposition \ref{pro2}, condition (A) holds. 

\noindent{\bf Remark 2.} Actually, the proof of 
Theorem \ref{thm4} provides an algorithm to locate the extreme set of any $u\in E(\lambda)\setminus\{0\}$ with $0<\lambda<\lambda_1^D$:

(1). $\#\mathrm{Extr}(u)=0$ when $\tau(u)\notin\mathcal{A}_\lambda$;

(2). $\#\mathrm{Extr}(u)=1$ when $\tau(u)\in\mathcal{A}_\lambda$;

\noindent furthermore, for the unique extreme set $A$ in case (2), we have

(2-1). $A=\overline{p_{23}p_{31}p_{12}}$ when $\tau(u)=\boldsymbol{\theta}$,

(2-2). $A=\{p_{ij}\}$ when $\tau(u)\in\mathcal{L}_{\psi^{-1}(\lambda),ij}$ for some $ij\in S_1$,

(2-3). $A\subset F_i\mathcal{SG}$ with $A\cap V_1=\emptyset$ when $\tau(u)\in \mathcal{G}_{\psi^{-1}(\lambda),i}$ for some $i\in S$,

\noindent where $\overline{p_{23}p_{31}p_{12}}$ denotes the boundary of the triangle with vertices $p_{23},p_{31},p_{12}$, see Figure \ref{figure3} and \ref{figure4};

\noindent when (2-3) happens, one can repeat the procedure for $u\circ F_i$ iteratively until the location of the extreme set is arrived.

We will prove Theorem \ref{thm4} in Section \ref{sec4}.


Next, let us look at a certain class of eigenfunctions on $\mathcal{SG}$.



Let $\lambda_0\in(0,6)$ and $\mathbf{a}\in\mathbb{R}^3\setminus\{\mathbf 0\}$. For an integer $n\geq 0$, and $\eps=\eps_1\eps_2\cdots\eps_n\in\{-1,1\}^n$ of length $n$ with $\eps_n=1$ if $n\geq 1$, define a sequence $\{\lambda^\eps_m\}_{m\geq 0}$ by
\begin{equation*}
\lambda^{\eps}_0=\lambda_0,\quad\lambda^{\eps}_{m+1}=
\begin{cases}
\varphi_{\eps_{m+1}}(\lambda^{\eps}_m),\quad &\text{if \ } 0\leq m<n,\\
\varphi_{-1}(\lambda^{\eps}_m), \quad &\text{if \ } m\geq n,\\
\end{cases}
\end{equation*}
and define a continuous function $u^\eps$ on $\mathcal{SG}$ by $u^\eps|_{V_0}=\mathbf{a}$ and
\begin{equation*}
u^{\eps}(p_{ij}^w)=\frac{(4-\lambda^{\eps}_{m+1})(u^{\eps}(p_i^w)+u^{\eps}(p_j^w))+2u^{\eps}(p_k^w)}{(2-\lambda^{\eps}_{m+1})(5-\lambda^{\eps}_{m+1})}
\end{equation*}
for each $w\in W_m$ and distinct $i,j,k\in S$, where $p_i^w=F_w(p_i)$, and $p_{ij}^w=F_w(p_{ij})$.
By the spectral decimation algorithm (Proposition \ref{thm31}), it is known that $u^\eps\in E(\lambda^{\eps})$ with $\lambda^{\eps}=\Psi(0,\eps,\lambda_0)$. Note that when $\eps=\varnothing$, it is obvious that $\lambda^\varnothing <\lambda_1^D$, so  $\#\mathrm{Extr}(u^\varnothing)=0$ or $1$ by Theorem \ref{thm4}.  

\begin{theorem}\label{thm5}
For  $\eps\in\{-1,1\}^n$ of length $n\geq 1$ with $\eps_n=1$, we have 
\begin{equation}\label{equn}
c^{-1}(\lambda^{\eps})^{d_S/2}\leq \#\mathrm{Extr}(u^{\eps})\leq c(\lambda^{\eps})^{d_S/2}
\end{equation}
with some constant $c>1$ independent of $\lambda_0$, $\mathbf a$ and $\eps$.
\end{theorem}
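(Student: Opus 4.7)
The plan is to reduce Theorem~\ref{thm5} to the low-energy regime of Theorem~\ref{thm4} by means of the level-$n$ cell decomposition. The crucial scaling observation is that the hypothesis $\eps_n=1$ forces $\lambda^\eps_n=\varphi_1(\lambda^\eps_{n-1})\in[5/2,5)$ uniformly in $\lambda_0$ and $\eps$, since $\varphi_1$ has range $[5/2,5)$ on $(0,25/4]$. Combining this with $\lambda^\eps=5^n\psi(\lambda^\eps_n)$ and the identity $5^{d_S/2}=3$ (because $d_S=\log 9/\log 5$), one gets $(\lambda^\eps)^{d_S/2}\asymp 3^n$ with universal constants, so the theorem reduces to proving $N(u^\eps)\asymp 3^n$.

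For the upper bound, Proposition~\ref{pro01} gives $u^\eps\circ F_w\in E(5^{-n}\lambda^\eps)$ for every $w\in W_n$. The identity $5^{-n}\lambda^\eps=\psi(\lambda^\eps_n)=5\psi(\lambda^\eps_{n+1})$ together with $\lambda^\eps_{n+1}=\varphi_{-1}(\lambda^\eps_n)\in(0,2)$ shows $5^{-n}\lambda^\eps<5\psi(2)=\lambda_1^D$, placing every restriction in the low-energy regime where Theorem~\ref{thm4} pins down $N(u^\eps\circ F_w)\leq 1$. Applying Lemma~\ref{pro1}-(b) with $\Gamma=W_n$ then yields
\[
N(u^\eps)\leq\sum_{w\in W_n}N(u^\eps\circ F_w)+\#(V_n\setminus V_0)\leq 3^n+\tfrac{3^{n+1}-3}{2}\leq\tfrac{5}{2}\cdot 3^n\asymp (\lambda^\eps)^{d_S/2}.
\]

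For the lower bound I would argue by induction on $n$. In the inductive step $n\geq 2$, the key point is that each restriction $u^\eps\circ F_i$ is itself of the $u^{\eps'}$-form with $\eps'=\eps_2\cdots\eps_n$, new initial value $\lambda^\eps_1$, and decimated boundary data $\mathbf{a}'_i=(u^\eps(F_i p_j))_{j=1}^3$; crucially $|\eps'|=n-1\geq 1$ and $\eps'_{n-1}=\eps_n=1$, so the inductive hypothesis gives $N(u^\eps\circ F_i)\geq c^{-1}(5^{-1}\lambda^\eps)^{d_S/2}$ for each $i\in S$. Summing over $i$ and exploiting the balance $3\cdot 5^{-d_S/2}=1$, Lemma~\ref{pro1}-(b) delivers $N(u^\eps)\geq c^{-1}(\lambda^\eps)^{d_S/2}$, and the constant $c$ is preserved across levels.

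The base case $n=1$ is the main obstacle. Here $\lambda^{(1)}=5\psi(\lambda^{(1)}_1)$ lies in a bounded positive range and $(\lambda^{(1)})^{d_S/2}$ is bounded, so the lower bound collapses to the uniform claim $N(u^{(1)})\geq 1$ for every $\lambda_0\in(0,6)$ and $\mathbf{a}\in\mathbb{R}^3\setminus\{\mathbf{0}\}$. My plan is to combine Lemma~\ref{pro1}-(b) at level $W_1$ with Theorem~\ref{thm4} applied to each sub-cell restriction $u^{(1)}\circ F_i$ (whose eigenvalue $\psi(\lambda^{(1)}_1)$ sits below $\lambda_1^D$): it suffices to show that either at least one of the three projective boundary images $\pi(a_i,u^{(1)}(p_{ij}),u^{(1)}(p_{ik}))$ lands in the triangle $\mathcal{D}_{\lambda^{(1)}_1}$ of Theorem~\ref{thm4}---which forces $N(u^{(1)}\circ F_i)=1$---or else some vertex $p_{ij}\in V_1\setminus V_0$ is itself an extreme point of $u^{(1)}$. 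A convenient sufficient condition for the latter is $u^{(1)}(p_{ij})\notin[\min_i a_i,\max_i a_i]$ for some $ij\in S_1$, which by continuity of $u^{(1)}$ forces an interior global extremum. The genuine difficulty is to exhaust the $(\lambda_0,\mathbf{a})$ parameter space uniformly: in configurations where every projective image misses $\mathcal{D}_{\lambda^{(1)}_1}$ and every $V_1$-value stays in the sandwich, one must descend into a sub-cell, use Remark~2 of Theorem~\ref{thm4} there, and iterate, then argue that the descent terminates with an extreme set after a uniformly bounded number of steps---this is where the detailed analysis of the decimation formula \eqref{eqdeci} with $\lambda^{(1)}_1\in(3,5)$ becomes unavoidable.
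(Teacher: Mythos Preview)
Your overall strategy matches the paper's: the upper bound via Theorem~\ref{thm4} on level-$n$ cells is exactly what the paper does, and your inductive reduction of the lower bound to the base case $n=1$ is sound (the subcell boundary data $\mathbf{a}'_i$ are indeed nonzero, since $\lambda^\eps_1\notin\{2,5,6\}$ forces the $3\times 3$ decimation matrix $P^i_{\lambda^\eps_1}$ to be invertible, so $u^\eps\circ F_i\not\equiv 0$). The paper skips the induction and proves directly that $N(u^\eps\circ F_w)\geq 1$ for every $w\in W_{n-1}$ (Lemma~\ref{lmpuzzle}), which is equivalent to your base case since $u^\eps\circ F_w$ is itself of the form $u^{(1)}$ with initial data $\lambda^\eps_{n-1}\in(0,6)$.

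The genuine gap is the base case, and you are right that it is the heart of the matter. Your proposed dichotomy (some $\tau(u^{(1)}\circ F_i)\in\mathcal{D}_\alpha$, or some $V_1$-value escapes the boundary sandwich) is not exhaustive, and your fallback ``iterative descent with bounded termination'' is not established---indeed it cannot work as stated, because once you pass to level $\geq 1$ you are already in the regime of Theorem~\ref{thm4}, where $N\leq 1$ on each cell, and no further descent produces new information. The paper's resolution is cleaner and avoids descent entirely: for $\alpha=\lambda^\eps_n\in(3,5)$ one computes $(\mathcal{P}^i_\alpha)^{-1}(\mathcal{D}_\alpha)$ explicitly (Lemma~\ref{pro41r}-(b)) and checks that
\[
\mathbb{R}\mathrm{P}^2\setminus\{\boldsymbol{\theta}\}=\Big(\bigcup_{i\in S}(\mathcal{P}^i_\alpha)^{-1}(\mathcal{D}_\alpha)\Big)\cup\Big(\bigcup_{ij\in S_1}(\mathcal{P}^i_\alpha)^{-1}(\mathcal{I}_{\alpha,j})\Big),
\]
so that for every boundary datum either some subcell already has $\tau\in\mathcal{D}_\alpha$ (hence $N=1$ there by Theorem~\ref{thm4}), or $\tau$ lands on an edge preimage, in which case Lemma~\ref{lmI} shows the corresponding $V_1$-vertex is an extreme point. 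The residual case $\tau=\boldsymbol{\theta}$ is handled separately by Lemma~\ref{lmI}-(a). This single projective covering replaces your open-ended descent and closes the argument in one step.
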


We will prove Theorem \ref{thm5} in Section \ref{sec5}.



\section{Proof of Theorem \ref{thm4}}\label{sec4}

We begin by introducing some notations.

An equivalent formulation of Proposition \ref{thm31}-(c) is as follows: for any $w\in W_*$ with $|w|=m\geq m_0$ and each $i\in S$,
\begin{equation*}
\tau^D(u\circ F_{wi})=P_{\lambda_{m+1}}^i\tau^D(u\circ F_w),
\end{equation*}
where $P_{\alpha}^i$ are defined for real $\alpha\notin\{2,5,6\}$ by
\begin{equation}\label{defP}
\begin{aligned}
&P_{\alpha}^1=\frac{1}{(2-\alpha)(5-\alpha)}
\begin{pmatrix}
(2-\alpha)(5-\alpha)&0&0\\
4-\alpha&4-\alpha&2\\
4-\alpha&2&4-\alpha
\end{pmatrix},\\
&P_{\alpha}^2=JP_{\alpha}^1J^{-1},\quad
P_{\alpha}^3=(J)^2P_{\alpha}^1(J^{-1})^2,\quad
\text{ with }J=
\begin{pmatrix}
0&0&1\\
1&0&0\\
0&1&0
\end{pmatrix}.
\end{aligned}
\end{equation}

Define $\mathcal{P}_{\alpha}^i:\mathbb{R}\mathrm{P}^2\to\mathbb{R}\mathrm{P}^2$ by $\mathcal{P}_{\alpha}^i=\pi\circ P_{\alpha}^i\circ\pi^{-1}$. Then, $\tau(u\circ F_{wi})=\mathcal{P}_{\lambda_{m+1}}^i\tau(u\circ F_w)$ for $w\in W_*$ with $|w|=m\geq m_0$ and $i\in S$, where $\tau=\pi\circ\tau^D$.

For simplicity, in this section we write $\lambda_0=\lambda_1(5-\lambda_1)=\psi^{-1}(\lambda)$ when $0<\lambda<\lambda_1^D$, so that $\lambda_{m}=\psi^{-1}(5^{-m}\lambda)$ for each $m\geq 0$. Moreover, by Remark \ref{re33}-(2), $\lambda_0\in(0,6)$ and $\lambda_m\in(0,2)$ for
$m\geq 1$.
Then, it follows directly from \eqref{defT} that $T_\lambda^i=P_{\lambda_1}^i=P_{\psi^{-1}(5^{-1}\lambda)}^i$, and for each $m\geq 0$ we have $T_{5^{-m}\lambda}^i=P_{\psi^{-1}(5^{-1}5^{-m}\lambda)}^i=P_{\lambda_{m+1}}^i$. Hence, $\mathcal T_{5^{-m}\lambda}^i=\mathcal P_{\lambda_{m+1}}^i$.

For $u\in E(\lambda)\setminus\{0\}$ with $0<\lambda<\lambda_1^D$ and $p_{ij}\in V_1\setminus V_0,\ ij\in S_1$, define
\begin{equation*}
\mathbf{M}_{\lambda,ij}=\Big\{\mathbf{a}\in\mathbb{R}^3\setminus\{\mathbf{0}\}: (\tau_\lambda^D)^{-1}(\mathbf{a})=u,\text{ with }u(p_{ij})>\max_{k\in S}u(p_k)\text{ or }u(p_{ij})<\min_{k\in S}u(p_k)\Big\}.
\end{equation*}
Then, $\mathbf{M}_{\lambda,ij}$ is a cone, and we write $\mathcal{M}_{\lambda,ij}=\pi(\mathbf{M}_{\lambda,ij})$.

Now we give a brief summary of the notations we defined on $\mathbb{R}\mathrm{P}^2$. For $i\in S$ and $ij\in S_1$,

(1). $\mathcal{P}_{\alpha}^i$ is the projective transformation induced by $P_{\alpha}^i$, where $\alpha\neq\{2,5,6\}$;

(2). $\mathcal{D}_\alpha$ defined in (\ref{defmd}) with $\alpha\in(0,6)$ is the open equilateral triangle centered at the origin $\boldsymbol{\theta}$ with vertices $\boldsymbol{\zeta}_{\alpha,23}$, $\boldsymbol{\zeta}_{\alpha,31}$ and $ \boldsymbol{\zeta}_{\alpha,12}$, which can be disjointly partitioned into three smaller open triangles $\mathcal G_{\alpha,i}$, three open line segments $\mathcal L_{\alpha,ij}$, and a single point $\boldsymbol{\theta}$;

furthermore, for $u\in E(\lambda)\setminus\{0\}$ with $0<\lambda<\lambda_1^D$,

(3). $\mathcal{T}_\lambda^i$ is the projective transformation induced by $T_\lambda^i$, satisfying $\mathcal{T}_{5^{-m}\lambda}^i=\mathcal{P}_{\lambda_{m+1}}^i$ for $m\geq 0$;

(4). $\mathcal{A}_\lambda$ is the region of $\tau(u)$ such that $u$ has at least one extreme set;

(5). $\mathcal{B}_\lambda$ is the region of $\tau(u)$ such that $u$ has exactly one extreme set that intersects $V_1$;

(6). $(\mathcal{T}_\lambda^i)^{-1}\mathcal{A}_{5^{-1}\lambda}$ is the region of $\tau(u)$ such that $u$ has at least one extreme set inside $F_i\mathcal{SG}$ (where ``inside'' means the extreme set does not intersect $F_iV_0$);

(7). $\mathcal{C}_\lambda$ is the region of $\tau(u)$ such that $(\mathrm{d}u)_{p_1},(\mathrm{d}u)_{p_2}$ and $(\mathrm{d}u)_{p_3}$ are either all positive or all negative;

(8). $\mathcal{M}_{\lambda,ij}$ is the region of $\tau(u)$ such that $u(p_{ij})$ is strictly greater (or less) than each of $u(p_1),u(p_2)$ and $u(p_3)$.

We divide the proof into two steps. In the first step we will show that $\mathcal C_\lambda=\mathcal{D}_{\lambda_0}$ and  $(\mathcal{T}_{\lambda}^i)^{-1}(\mathcal{C}_{5^{-1}\lambda})=(\mathcal{P}_{\lambda_1}^i)^{-1}(\mathcal{D}_{\lambda_1})=\mathcal{G}_{\lambda_0,i}$; and in the second step we will show that $\mathcal{A}_\lambda=\mathcal{C}_\lambda$ and $\mathcal{B}_\lambda=\big(\bigcup_{ij\in S_1}\mathcal{L}_{\lambda_0,ij}\big)\cup\{\boldsymbol{\theta}\}$, then complete the proof of Theorem \ref{thm4}.

\subsection{The first step} We start with two basic properties of $\mathcal P_\alpha^i$.

\begin{lemma}\label{pro41}
$\mathcal P_\alpha^i$ is a continuous bijection on $\mathbb{R}\mathrm{P}^2$ for any real $\alpha\notin\{2,5,6\}$ and $i\in S$, having the following explicit expression:
\begin{equation}\label{defpiP}
\begin{aligned}
&\mathcal{P}_{\alpha}^1(\boldsymbol{\xi})=
\begin{cases}
\Bigg(\mathbf{c}_{\alpha}^\mathrm{t}
\begin{pmatrix}
1\\ \boldsymbol{\xi}
\end{pmatrix}\Bigg)^{-1}
\Bigg(R_{\alpha}^\mathrm{t}
\begin{pmatrix}
1\\ \boldsymbol{\xi}
\end{pmatrix}\Bigg),\quad &\text{if \ } \mathbf{c}_{\alpha}^\mathrm{t}
\begin{pmatrix}
1\\ \boldsymbol{\xi}
\end{pmatrix}\neq 0\ ;\\
\Bigg[R_{\alpha}^\mathrm{t}
\begin{pmatrix}
1\\ \boldsymbol{\xi}
\end{pmatrix}\Bigg]_\infty, \quad &\text{if \ } \mathbf{c}_{\alpha}^\mathrm{t}
\begin{pmatrix}
1\\ \boldsymbol{\xi}
\end{pmatrix}=0\ ,\\
\end{cases}\\
&\mathcal{P}_{\alpha}^2=\mathcal{J}\circ\mathcal{P}_{\alpha}^1\circ\mathcal{J}^{-1},\quad \mathcal{P}_{\alpha}^3=\mathcal{J}^2\circ\mathcal{P}_{\alpha}^1\circ(\mathcal{J}^{-1})^2,
\end{aligned}
\end{equation}
with the convention $\begin{pmatrix}1\\ [\boldsymbol{\eta}]_\infty\end{pmatrix}
:=\begin{pmatrix}0\\ \boldsymbol{\eta}\end{pmatrix}$, and
\begin{equation*}
\begin{aligned}
&\mathbf{c}_{\alpha}^\mathrm{t}=\big((5-\alpha)(6-\alpha),\ 0,\ 2(2-\alpha)(6-\alpha)\big),\\ &R_{\alpha}^\mathrm{t}=\begin{pmatrix}
0&3(2-\alpha)&0\\
-\alpha(5-\alpha)&0&(2-\alpha)(9-2\alpha)
\end{pmatrix},
\end{aligned}
\end{equation*}
where $\mathcal{J}:\mathbb{R}\mathrm{P}^2\to\mathbb{R}\mathrm{P}^2$ is defined by
\begin{equation}\label{defpiJ}
\mathcal{J}(\boldsymbol{\xi})=
\begin{cases}
G\boldsymbol{\xi},\ \qquad\text{if }\boldsymbol{\xi}\in\mathbb{R}^2;\\
[G\boldsymbol{\eta}]_\infty,\quad\text{if }\boldsymbol{\xi}=[\boldsymbol{\eta}]_\infty\in L_\infty,
\end{cases}
\quad G=\frac{1}{2}
\begin{pmatrix}
-1&-\sqrt{3}\\
\sqrt{3}&-1
\end{pmatrix}.
\end{equation}
\end{lemma}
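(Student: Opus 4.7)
The plan is to reduce Lemma~\ref{pro41} to two essentially routine tasks: invertibility of the linear map $P_\alpha^i$, which immediately yields that the induced projective map $\mathcal{P}_\alpha^i$ is a well-defined continuous bijection of $\mathbb{R}\mathrm{P}^2$; and a direct matrix calculation producing the explicit coordinate expression in \eqref{defpiP}.

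First I would verify invertibility. Expanding $\det P_\alpha^1$ along the first row gives
\[
\det P_\alpha^1 = \frac{(2-\alpha)(5-\alpha)\bigl((4-\alpha)^2-4\bigr)}{(2-\alpha)^2(5-\alpha)^2} = \frac{(2-\alpha)(6-\alpha)}{(2-\alpha)(5-\alpha)} = \frac{6-\alpha}{5-\alpha},
\]
which is nonzero precisely when $\alpha\notin\{2,5,6\}$. Since $J$ is a permutation matrix, $P_\alpha^2$ and $P_\alpha^3$ inherit invertibility by conjugation. Any invertible linear map on $\mathbb{R}^3$ sends $1$-dimensional subspaces bijectively to $1$-dimensional subspaces, so it descends along the open quotient $\pi$ defined in \eqref{defpi} to a homeomorphism of $\mathbb{R}\mathrm{P}^2$; this proves the first assertion.

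Next I would derive the explicit formula for $i=1$. For $\boldsymbol{\xi}\in\mathbb{R}^2$, lift it to the representative $\mathbf{x}\in\mathbb{R}^3$ satisfying $\mathbf{1}^\mathrm{t}\mathbf{x}=3$ and $Q^\mathrm{t}\mathbf{x}=3\boldsymbol{\xi}$, namely $x_1=1+2\xi^{(2)}$, $x_{2,3}=1-\xi^{(2)}\mp\sqrt{3}\,\xi^{(1)}$. Apply $P_\alpha^1$ via \eqref{defP}, and express both $\mathbf{1}^\mathrm{t}P_\alpha^1\mathbf{x}$ and $Q^\mathrm{t}P_\alpha^1\mathbf{x}$ as affine-linear functions of $(\xi^{(1)},\xi^{(2)})$. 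A short computation using the substitutions $A=2-\alpha$, $B=5-\alpha$ factors each expression neatly, revealing that $\mathbf{1}^\mathrm{t}P_\alpha^1\mathbf{x}=\frac{1}{AB}\mathbf{c}_\alpha^\mathrm{t}(1,\boldsymbol{\xi})^\mathrm{t}$ and $Q^\mathrm{t}P_\alpha^1\mathbf{x}=\frac{1}{AB}R_\alpha^\mathrm{t}(1,\boldsymbol{\xi})^\mathrm{t}$. The common factor $\frac{1}{AB}$ cancels in the projective quotient, giving the stated formula. The ``point at infinity'' case is treated identically by lifting $\boldsymbol{\xi}=[\boldsymbol{\eta}]_\infty$ to an $\mathbf{x}$ with $\mathbf{1}^\mathrm{t}\mathbf{x}=0$ and $Q^\mathrm{t}\mathbf{x}=\boldsymbol{\eta}$; the dichotomy in \eqref{defpiP} then corresponds to whether $\mathbf{c}_\alpha^\mathrm{t}(1,\boldsymbol{\xi})^\mathrm{t}$ vanishes.

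Finally, the conjugation identities $\mathcal{P}_\alpha^2=\mathcal{J}\circ\mathcal{P}_\alpha^1\circ\mathcal{J}^{-1}$ and $\mathcal{P}_\alpha^3=\mathcal{J}^2\circ\mathcal{P}_\alpha^1\circ\mathcal{J}^{-2}$ follow from the defining matrix identities together with the compatibility $\pi\circ J=\mathcal{J}\circ\pi$. This compatibility is verified on the three generating directions: the images $\pi(e_1),\pi(e_2),\pi(e_3)$ listed just after \eqref{defpi} are the vertices of an equilateral triangle centered at $\boldsymbol{\theta}$, and one checks directly that the matrix $G$ in \eqref{defpiJ} realizes the cyclic permutation $\pi(e_1)\mapsto\pi(e_2)\mapsto\pi(e_3)\mapsto\pi(e_1)$, matching the action of $J$. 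Projective compatibility for a linear bijection is determined by its action on any projective frame, so this extends to all of $\mathbb{R}\mathrm{P}^2$. The only real obstacle is notational --- tracking the scalar factors that must be absorbed into the projective equivalence at each step --- but the underlying computations are elementary.
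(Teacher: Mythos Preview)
Your approach is correct and essentially identical to the paper's: both establish invertibility of $P_\alpha^1$ (the paper via its eigenvalues $1$, $\frac{6-\alpha}{(2-\alpha)(5-\alpha)}$, $\frac{1}{5-\alpha}$; you via the determinant), then pass through the change of basis $(\mathbf{1},Q)$ to obtain the explicit formula, and finally read off $\mathcal{J}$ from the conjugate of $J$. Two minor slips to correct: your determinant should be $\frac{6-\alpha}{(2-\alpha)(5-\alpha)^2}$, since the scalar prefactor $\frac{1}{(2-\alpha)(5-\alpha)}$ contributes a cube, not a square, for a $3\times 3$ matrix (the conclusion on invertibility is unaffected); and a projective frame in $\mathbb{R}\mathrm{P}^2$ requires four points in general position, not three, so to pin down $\mathcal{J}$ you should also observe that both $\pi\circ J\circ\pi^{-1}$ and the map in \eqref{defpiJ} fix $\boldsymbol{\theta}=\pi(\mathbf{1})$.
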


\begin{proof}
A direct calculation shows that the matrix $P_\alpha^i$ has eigenvalues $1,\frac{6-\alpha}{(2-\alpha)(5-\alpha)},\frac{1}{5-\alpha}$ for any $i\in S$, so that $P_\alpha^i$ is invertible for $\alpha\notin\{2,5,6\}$. It follows that $\mathcal P_\alpha^i=\pi\circ P_\alpha^i\circ\pi^{-1}$ is a continuous bijection.

From (\ref{defpi}), for each $\boldsymbol{\xi}\in\mathbb{R}\mathrm{P}^2$ we can find $\mathbf{x}\in\pi^{-1}(\boldsymbol{\xi})$ by
\begin{equation}\label{eq4.4}
\mathbf{x}=
\begin{cases}
(\mathbf{1},Q)^{-\mathrm{t}}
\begin{pmatrix}
1\\ \boldsymbol{\xi}
\end{pmatrix},\quad\text{if}\quad \boldsymbol{\xi}\in\mathbb{R}^2;
\\
(\mathbf{1},Q)^{-\mathrm{t}}
\begin{pmatrix}
0\\ \boldsymbol{\eta}
\end{pmatrix},\quad\text{if}\quad \boldsymbol{\xi}=[\boldsymbol{\eta}]_\infty\in L_\infty,
\end{cases}
\end{equation}
where $(\mathbf{1},Q)^{-\mathrm{t}}=((\mathbf{1},Q)^\mathrm{t})^{-1}$, $L_\infty=\mathbb{R}\mathrm{P}^2\setminus\mathbb{R}^2$. For consistency in later context, we set 
$
\begin{pmatrix}
1\\ [\boldsymbol{\eta}]_\infty
\end{pmatrix}
=
\begin{pmatrix}
0\\ \boldsymbol{\eta}
\end{pmatrix}
$ for $[\boldsymbol{\eta}]_\infty\in L_\infty$.

Then, for $\boldsymbol{\xi}\in \mathbb{R}\mathrm{P}^2$, we see that
\begin{equation*}
\mathcal{P}_{\alpha}^1(\boldsymbol{\xi})=\pi\circ P_{\alpha}^1\circ\pi^{-1}(\boldsymbol{\xi})
=\pi\Bigg(P_{\alpha}^1(\mathbf{1},Q)^{-\mathrm{t}}
\begin{pmatrix}
1\\ \boldsymbol{\xi}
\end{pmatrix}\Bigg).
\end{equation*}
Recalling (\ref{defpi}) for the definition of $\pi$ and noticing that for $\mathbf y\in \mathbb R^3\setminus \{\mathbf 0\}$, 
$(1,Q)^\mathrm t\mathbf y=\begin{pmatrix}
1^\mathrm t\mathbf y\\ Q^\mathrm t\mathbf{y}
\end{pmatrix}$,
we get (\ref{defpiP}) since
\begin{equation*}
\begin{aligned}
(\mathbf{1},Q)^\mathrm{t}P_{\alpha}^1(\mathbf{1},Q)^{-\mathrm{t}}\begin{pmatrix}
1\\ \boldsymbol{\xi}
\end{pmatrix}
=\frac{1}{3(2-\alpha)(5-\alpha)}(\mathbf{c}_{\alpha},R_{\alpha})^\mathrm{t}\begin{pmatrix}
1\\ \boldsymbol{\xi}
\end{pmatrix}.
\end{aligned}
\end{equation*}

Further, define $\mathcal{J}=\pi\circ J\circ\pi^{-1}$. Since
\begin{equation*}
(\mathbf{1},Q)^\mathrm{t}J(\mathbf{1},Q)^{-\mathrm{t}}=\frac{1}{2}
\begin{pmatrix}
2&0&0\\
0&-1&-\sqrt{3}\\
0&\sqrt{3}&-1
\end{pmatrix},
\end{equation*}
we have (\ref{defpiJ}) holds and
\begin{equation*}
\begin{aligned}
&\mathcal{P}_{\alpha}^2=\pi\circ P_{\alpha}^2\circ\pi^{-1}=\pi\circ J\circ P_{\alpha}^1\circ J^{-1}\circ\pi^{-1}=\mathcal{J}\circ\mathcal{P}_{\alpha}^1\circ\mathcal{J}^{-1},\\ &\mathcal{P}_{\alpha}^3=\pi\circ J^2\circ P_{\alpha}^1\circ (J^{-1})^2\circ\pi^{-1}=\mathcal{J}^2\circ\mathcal{P}_{\alpha}^1\circ(\mathcal{J}^{-1})^2.
\end{aligned}
\end{equation*}
\end{proof}

\begin{lemma}\label{pro41r}
(a). For $\alpha\in(0,2)$,
\begin{equation*}
(\mathcal P_\alpha^1)^{-1}(\mathcal D_\alpha)=\mathcal{G}_{\Phi(\alpha),1}=\Big\{(\xi^{(1)},\xi^{(2)})^\mathrm{t}:
\frac{\Phi(\alpha)}{2(6-\Phi(\alpha))}>\xi^{(2)}>\frac{1}{\sqrt{3}}|\xi^{(1)}|\Big\};
\end{equation*}

(b). For $\alpha\in(3,5)$,
\begin{equation*}
\begin{aligned}
(\mathcal P_\alpha^1)^{-1}(\mathcal D_\alpha)&=\Big\{(\xi^{(1)},\xi^{(2)})^\mathrm{t}:
\xi^{(2)}>\frac{\Phi(\alpha)}{2(6-\Phi(\alpha))}\text{ and }\xi^{(2)}>\frac{1}{\sqrt{3}}|\xi^{(1)}|\Big\}\cup\\
&\Big\{(\xi^{(1)},\xi^{(2)})^\mathrm{t}:
\xi^{(2)}<-\frac{1}{\sqrt{3}}|\xi^{(1)}|\Big\}\cup\Big\{[(\eta^{(1)},\eta^{(2)})^\mathrm{t}]_\infty:
|\eta^{(2)}|>\frac{1}{\sqrt{3}}|\eta^{(1)}|\Big\}.
\end{aligned}
\end{equation*}

(c). For $\alpha=3$,
\begin{equation*}
(\mathcal P_\alpha^1)^{-1}(\mathcal D_\alpha)=\Big\{(\xi^{(1)},\xi^{(2)})^\mathrm{t}:
\xi^{(2)}<-\frac{1}{\sqrt{3}}|\xi^{(1)}|\Big\}.
\end{equation*}

Moreover, $(\mathcal P_\alpha^2)^{-1}(\mathcal D_\alpha)=\mathcal J\big((\mathcal P_\alpha^1)^{-1}(\mathcal D_\alpha)\big),\ (\mathcal P_\alpha^3)^{-1}(\mathcal D_\alpha)=\mathcal J^2\big((\mathcal P_\alpha^1)^{-1}(\mathcal D_\alpha)\big)$.
\end{lemma}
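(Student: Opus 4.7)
The plan is to work entirely with $\mathcal{P}_\alpha^1$, exploiting the explicit formula \eqref{defpiP}; the statements for $\mathcal{P}_\alpha^2$ and $\mathcal{P}_\alpha^3$ then follow from the conjugation relations in Lemma~\ref{pro41} together with the rotational symmetry $\mathcal{J}(\mathcal{D}_\alpha)=\mathcal{D}_\alpha$, which one verifies by checking that $G$ cyclically permutes the vertices $\boldsymbol{\zeta}_{\alpha,23},\boldsymbol{\zeta}_{\alpha,31},\boldsymbol{\zeta}_{\alpha,12}$.

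The first step is to evaluate $\mathcal{P}_\alpha^1$ at the three vertices of $\mathcal{G}_{\Phi(\alpha),1}$. Direct substitution in \eqref{defpiP} gives $\mathcal{P}_\alpha^1(\boldsymbol{\theta})=\boldsymbol{\zeta}_{\alpha,23}$ for any $\alpha\notin\{2,5,6\}$. For $\boldsymbol{\xi}=\boldsymbol{\zeta}_{\Phi(\alpha),31}=\tfrac{\Phi(\alpha)}{2(6-\Phi(\alpha))}(\sqrt{3},1)^\mathrm{t}$, substituting and using $\Phi(\alpha)=\alpha(5-\alpha)$ together with the identity $(2-\alpha)(9-2\alpha)-2(6-\Phi(\alpha))=3(2-\alpha)$ causes both numerator components to factor as a common scalar multiple of $(\sqrt{3},1)^\mathrm{t}$ and the denominator to simplify, so the ratio equals $\tfrac{\alpha}{2(6-\alpha)}(\sqrt{3},1)^\mathrm{t}=\boldsymbol{\zeta}_{\alpha,31}$. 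The case of $\boldsymbol{\zeta}_{\Phi(\alpha),12}$ is identical by the $\xi^{(1)}\mapsto-\xi^{(1)}$ symmetry.

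Since $\mathcal{P}_\alpha^1$ is a projective bijection (Lemma~\ref{pro41}), the preimage of the open triangle $\mathcal{D}_\alpha$ is one of the four projective triangles in $\mathbb{R}\mathrm{P}^2$ cut out by the three preimage lines (the lines through pairs of vertex preimages). To select the correct one, I would compute the preimage of the centroid $\boldsymbol{\theta}\in\mathcal{D}_\alpha$ as a test point: solving the vanishing conditions on the numerator of \eqref{defpiP} gives this preimage as $\bigl(0,\tfrac{\alpha(5-\alpha)}{(2-\alpha)(9-2\alpha)}\bigr)^\mathrm{t}$. For $\alpha\in(0,2)$, the test point has positive second coordinate and lies in the interior of $\mathcal{G}_{\Phi(\alpha),1}$; moreover the denominator $(5-\alpha)(6-\alpha)+2(2-\alpha)(6-\alpha)\xi^{(2)}$ is strictly positive on $\{\xi^{(2)}\geq 0\}$, so $\mathcal{G}_{\Phi(\alpha),1}$ is mapped bijectively onto $\mathcal{D}_\alpha$ without touching the line at infinity, yielding $(\mathcal{P}_\alpha^1)^{-1}(\mathcal{D}_\alpha)=\mathcal{G}_{\Phi(\alpha),1}$ and hence part (a). For $\alpha\in(3,5)$, the sign flip of $(2-\alpha)$ sends the centroid preimage into the lower wedge $\{\xi^{(2)}<-|\xi^{(1)}|/\sqrt{3}\}$; the correct projective triangle is now the one opposite $\mathcal{G}_{\Phi(\alpha),1}$, and intersecting it with the affine chart and the line at infinity produces the three pieces listed in (b).

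Part (c), $\alpha=3$, is a genuinely degenerate case since $\Phi(3)=6$ sends $\boldsymbol{\zeta}_{\Phi(3),31}$ and $\boldsymbol{\zeta}_{\Phi(3),12}$ to the ideal points $[(\sqrt{3},1)^\mathrm{t}]_\infty$ and $[(-\sqrt{3},1)^\mathrm{t}]_\infty$; the third edge preimage collapses to the line at infinity, so the four projective triangles reduce to the four affine wedges cut out by $\xi^{(2)}=\pm\xi^{(1)}/\sqrt{3}$. The centroid preimage at $\alpha=3$ is $(0,-2)^\mathrm{t}$, in the lower wedge, yielding the claim. The main obstacle I anticipate is the sign bookkeeping in part (b): one must correctly identify the opposite projective triangle and verify that it translates to exactly the three listed pieces, keeping track of where the vanishing locus of the denominator (the line $\xi^{(2)}=(5-\alpha)/(2(\alpha-2))$, which for $\alpha\in(3,5)$ cuts through the interior of the would-be bounded triangle $\mathcal{G}_{\Phi(\alpha),1}$) sits relative to the other edge preimages.
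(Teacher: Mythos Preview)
Your proposal is correct and follows essentially the same approach as the paper: both compute $\mathcal{P}_\alpha^1$ at the three vertices $\boldsymbol{\theta},\boldsymbol{\zeta}_{\Phi(\alpha),31},\boldsymbol{\zeta}_{\Phi(\alpha),12}$ (obtaining $\boldsymbol{\zeta}_{\alpha,23},\boldsymbol{\zeta}_{\alpha,31},\boldsymbol{\zeta}_{\alpha,12}$), then use the preimage of the centroid $\boldsymbol{\theta}$ as a test point to select the correct projective triangle, handle $\alpha=3$ separately via the ideal vertices, and deduce the $i=2,3$ cases from $\mathcal{J}$-invariance of $\mathcal{D}_\alpha$. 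Your test point $\bigl(0,\tfrac{\alpha(5-\alpha)}{(2-\alpha)(9-2\alpha)}\bigr)^\mathrm{t}$ is exactly the paper's $\tfrac{2(3-\alpha)}{9-2\alpha}\cdot\tfrac{\Phi(\alpha)}{2(6-\Phi(\alpha))}(0,1)^\mathrm{t}$ after simplifying $6-\Phi(\alpha)=(2-\alpha)(3-\alpha)$.
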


\begin{figure}[h]
\begin{center}
\includegraphics[width=4.9cm]{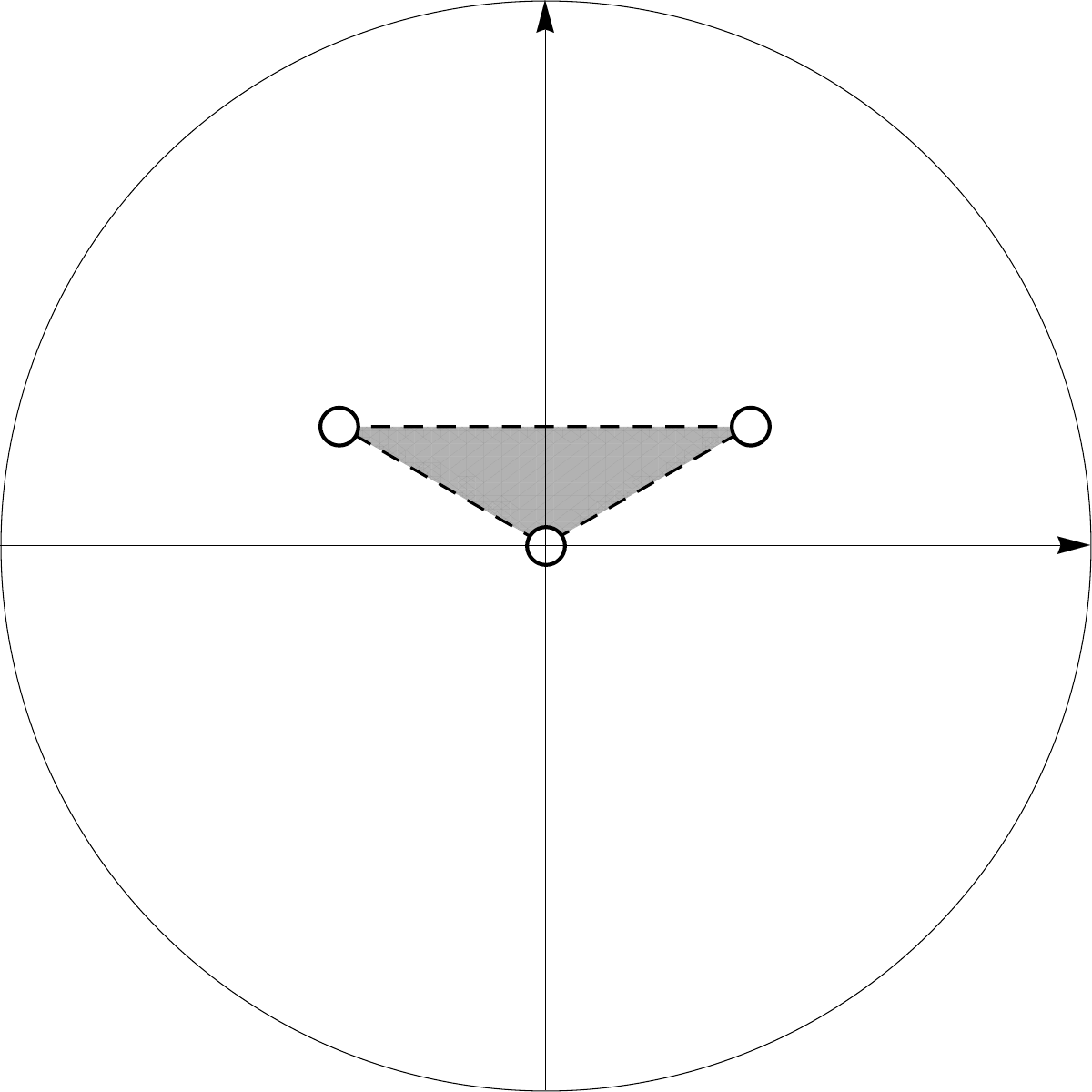}\hspace{0.2cm}
\includegraphics[width=4.9cm]{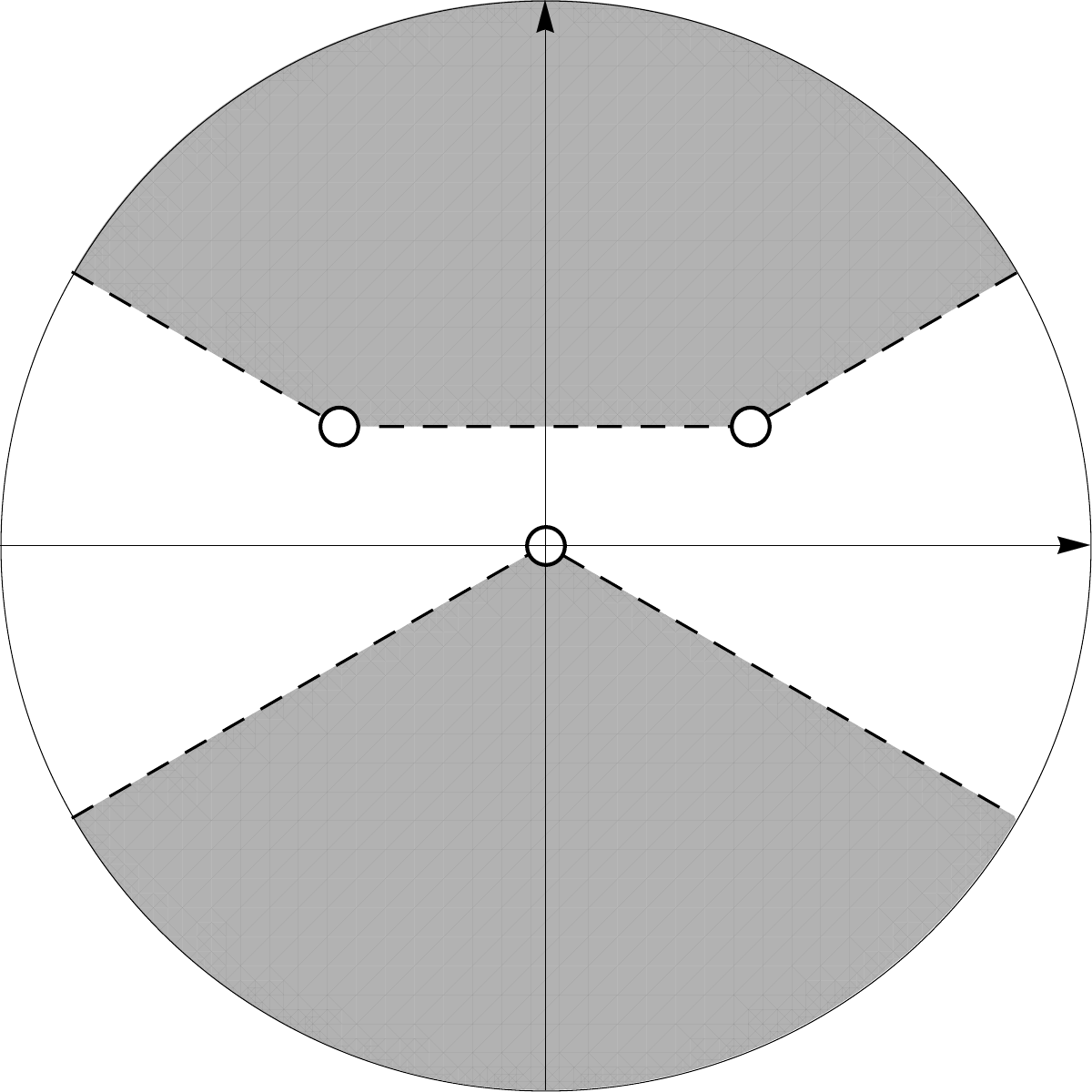}\hspace{0.2cm}
\includegraphics[width=4.9cm]{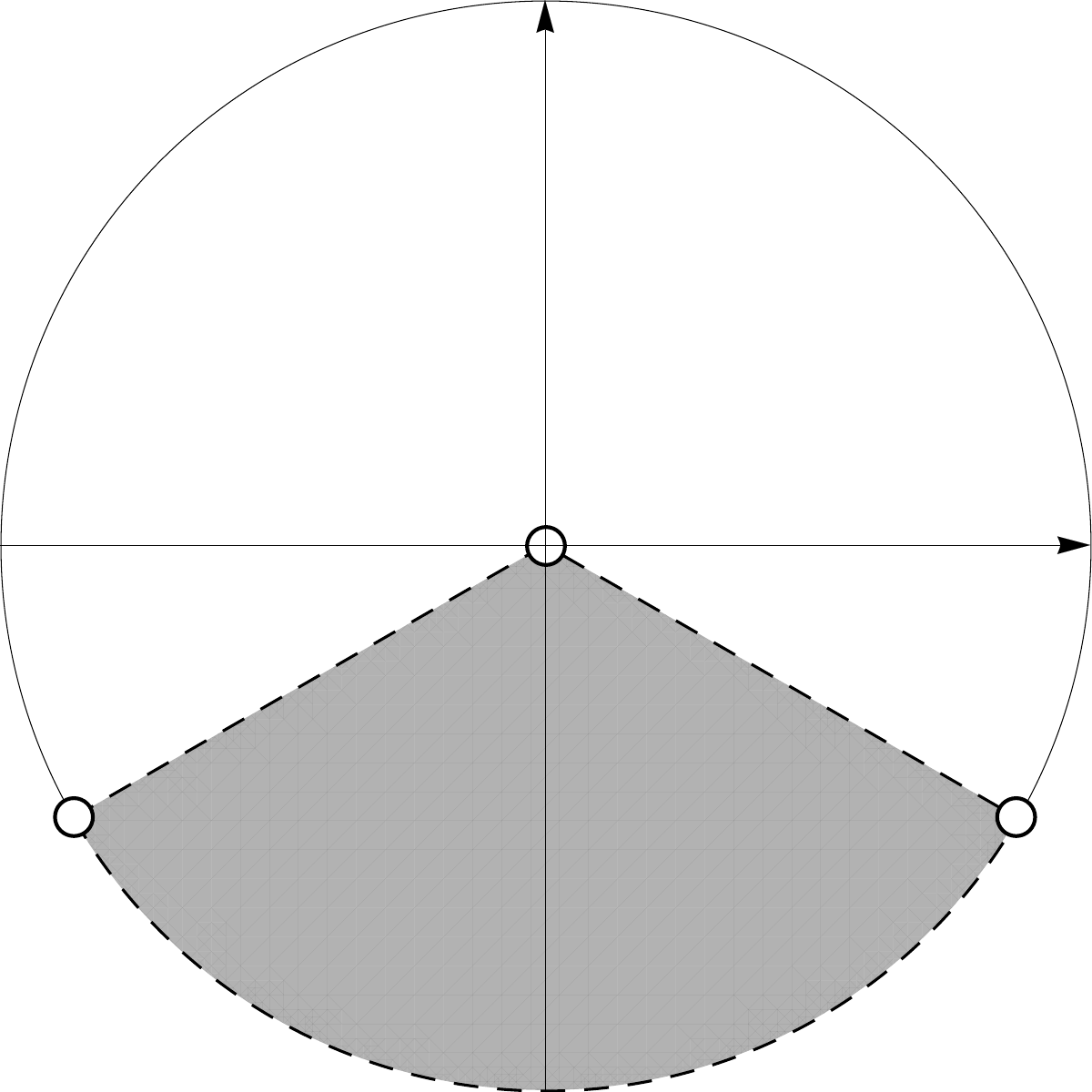}
\vspace{1cm}\\
\caption{$(\mathcal P_\alpha^1)^{-1}(\mathcal D_\alpha)$ (the shaded region), and the outer circle represents $L_\infty$, the line at infinity.}
\label{figurepalpha}
\end{center}
\begin{picture}(0,0) \thicklines
\put(147,221){\footnotesize$\xi^{(1)}$}
\put(-150,221){\footnotesize$\xi^{(1)}$}
\put(-1,221){\footnotesize$\xi^{(1)}$}
\put(203,153){\footnotesize$\xi^{(2)}$}
\put(-95,153){\footnotesize$\xi^{(2)}$}
\put(54,153){\footnotesize$\xi^{(2)}$}
\put(149,151){\footnotesize$\boldsymbol{\theta}$}
\put(-150,140){\footnotesize$\boldsymbol{\theta}$}
\put(0,151){\footnotesize$\boldsymbol{\theta}$}
\put(13,157){\footnotesize$\boldsymbol{\zeta}_{\Phi(\alpha),31}$}
\put(-130,170){\footnotesize$\boldsymbol{\zeta}_{\Phi(\alpha),31}$}
\put(-33,172){\footnotesize$\boldsymbol{\zeta}_{\Phi(\alpha),12}$}
\put(-185,170){\footnotesize$\boldsymbol{\zeta}_{\Phi(\alpha),12}$}
\put(183,104){\footnotesize$(\sqrt{3},-1)^\mathrm{t}_\infty$}
\put(74,104){\footnotesize$(-\sqrt{3},-1)^\mathrm{t}_\infty$}
\put(-38,58){(b). $\alpha\in(3,5)$}
\put(-186,58){(a). $\alpha\in(0,2)$}
\put(118,58){(c). $\alpha=3$}
\end{picture}
\end{figure}

\begin{proof}
Since $\mathcal P_\alpha^1$ is induced by the invertible linear transformation $P_\alpha^1$, it maps (projective) triangles to (projective) triangles. Noticing that on $\mathbb R\mathrm P^2$ we need three vertices and an interior point to determine a projective triangle, to determine $(\mathcal P_\alpha^1)^{-1}(\mathcal D_\alpha)$, we only need to consider the three vertices of $\mathcal D_\alpha$, together with $\boldsymbol{\theta}$, one of its interior points.

By Lemma \ref{pro41} and a direct computation, for $\alpha\neq 3$,
\begin{equation}\label{eqpalpha}
\begin{aligned}
&\mathcal{P}_{\alpha}^1(\boldsymbol{\theta})=\mathcal{P}_{\alpha}^1\Bigg(\begin{pmatrix}0\\0\end{pmatrix}\Bigg)=\frac{\alpha}{6-\alpha}\begin{pmatrix}0\\-1\end{pmatrix}=\boldsymbol{\zeta}_{\alpha,23},\\
&\mathcal{P}_{\alpha}^1(\boldsymbol{\zeta}_{\Phi(\alpha),31})=\mathcal{P}_{\alpha}^1\Bigg(\frac{\alpha(5-\alpha)}{2(2-\alpha)(3-\alpha)}\begin{pmatrix}\sqrt{3}\\1\end{pmatrix}\Bigg)=\frac{\alpha}{2(6-\alpha)}\begin{pmatrix}\sqrt{3}\\1\end{pmatrix}=\boldsymbol{\zeta}_{\alpha,31},\\
&\mathcal{P}_{\alpha}^1(\boldsymbol{\zeta}_{\Phi(\alpha),12})=\mathcal{P}_{\alpha}^1\Bigg(\frac{\alpha(5-\alpha)}{2(2-\alpha)(3-\alpha)}\begin{pmatrix}-\sqrt{3}\\1\end{pmatrix}\Bigg)=\frac{\alpha}{2(6-\alpha)}\begin{pmatrix}-\sqrt{3}\\1\end{pmatrix}=\boldsymbol{\zeta}_{\alpha,12},\\
&\mathcal{P}_{\alpha}^1\Bigg(\frac{2(3-\alpha)}{9-2\alpha}\cdot\frac{\Phi(\alpha)}{2(6-\Phi(\alpha))}\begin{pmatrix}0\\1\end{pmatrix}\Bigg)=\begin{pmatrix}0\\0\end{pmatrix}=\boldsymbol{\theta},
\end{aligned}
\end{equation}
where $\frac{2(3-\alpha)}{9-2\alpha}\in(0,1)$ for $\alpha\in(0,2)$, and $\frac{2(3-\alpha)}{9-2\alpha}\notin(0,1)$ for $\alpha\in(3,5)$ ($\mathcal{P}_{9/2}^1([(0,1)^\mathrm t]_\infty)=\boldsymbol{\theta}$). So (a) and (b) follow immediately.

For $\alpha=3$, noticing that $\Phi(\alpha)=6\notin(0,6)$, we have
\begin{equation}\label{eqpalpha3}
\begin{aligned}
&\mathcal{P}_{\alpha}^1(\boldsymbol{\theta})=\mathcal{P}_{\alpha}^1\Bigg(\begin{pmatrix}0\\0\end{pmatrix}\Bigg)=\frac{\alpha}{6-\alpha}\begin{pmatrix}0\\-1\end{pmatrix}=\boldsymbol{\zeta}_{\alpha,23},\\
&\mathcal{P}_{\alpha}^1\Bigg(\bigg[\begin{pmatrix}\sqrt{3}\\1\end{pmatrix}\bigg]_\infty\Bigg)=\frac{\alpha}{2(6-\alpha)}\begin{pmatrix}\sqrt{3}\\1\end{pmatrix}=\boldsymbol{\zeta}_{\alpha,13},\\
&\mathcal{P}_{\alpha}^1\Bigg(\bigg[\begin{pmatrix}-\sqrt{3}\\1\end{pmatrix}\bigg]_\infty\Bigg)=\frac{\alpha}{2(6-\alpha)}\begin{pmatrix}-\sqrt{3}\\1\end{pmatrix}=\boldsymbol{\zeta}_{\alpha,12},\\
&\mathcal{P}_{\alpha}^1\Bigg(\frac{\alpha(5-\alpha)}{(2-\alpha)(9-2\alpha)}\begin{pmatrix}0\\1\end{pmatrix}\Bigg)=\begin{pmatrix}0\\0\end{pmatrix}=\boldsymbol{\theta},
\end{aligned}
\end{equation}
where $\frac{\alpha(5-\alpha)}{(2-\alpha)(9-2\alpha)}=-2$, gives (c). 

Finally, noticing that $\mathcal J^{-1}(\mathcal D_\alpha)=\mathcal D_\alpha$, we see that $(\mathcal P_\alpha^2)^{-1}(\mathcal D_\alpha)=\mathcal J\big((\mathcal P_\alpha^1)^{-1}(\mathcal D_\alpha)\big)$ and $(\mathcal P_\alpha^3)^{-1}(\mathcal D_\alpha)=\mathcal J^2\big((\mathcal P_\alpha^1)^{-1}(\mathcal D_\alpha)\big)$.
\end{proof}

\noindent\textbf{Remark.} Note that when $\alpha\in (3,5)$, the set $(\mathcal P_\alpha^1)^{-1}(\mathcal D_\alpha)$ contains a part of the line at infinity of $\mathbb{R}\mathrm{P}^2$, whereas when $\alpha=3$, it does not. See Figure \ref{figurepalpha} for details of $(\mathcal P_\alpha^1)^{-1}(\mathcal D_\alpha)$ when $\alpha\in (0,2)$, $\alpha\in (3,5)$ or $\alpha=3$.

We also need the following observation for functions in $E(\lambda)\setminus\{0\}$.

\begin{lemma}\label{pro43}
For $u\in E(\lambda)\setminus\{0\}$ with $0<\lambda<\lambda_1^D$ and distinct $i,j,k\in S$, $(\mathrm du)_{p_i}=0$ if and only if $(4-\lambda_0)u(p_i)-2(u(p_j)+u(p_k))=0$.
\end{lemma}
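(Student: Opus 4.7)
The plan is to first show that $(\mathrm du)_{p_i}$ is an $S_3$-symmetric linear combination of the three boundary values, and then pin down the exact coefficient ratio by producing a one-parameter family of boundary data that makes $(\mathrm du)_{p_i}$ vanish.

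For the linear structure, note that for $0<\lambda<\lambda_1^D$ we have $\lambda\notin\Lambda^D\cup\Lambda^N$ (since $\lambda_1^D<\lambda_2^N$), so Lemma \ref{thm2.5} makes $\tau_\lambda^D$ and $\tau_\lambda^N$ bijections to $\mathbb R^3$. The composition $\tau_\lambda^N\circ(\tau_\lambda^D)^{-1}$ commutes with the $S_3$-action permuting $\{p_1,p_2,p_3\}$ because $\mathcal{SG}$ and its canonic Laplacian are $S_3$-invariant; hence it is circulant and
\[
(\mathrm du)_{p_i}=a\,u(p_i)+b\bigl(u(p_j)+u(p_k)\bigr)
\]
for constants $a=a(\lambda)$, $b=b(\lambda)$. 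Thus everything reduces to verifying $2a+(4-\lambda_0)b=0$.

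To do this I exhibit a zero of $(\mathrm du)_{p_i}$. Pick the unique $u\in E(\lambda)$ with boundary values $(p,q,q)$ satisfying $(4-\lambda_0)p=4q$. By the $j\leftrightarrow k$ symmetry, $u(F_i^m p_j)=u(F_i^m p_k)=:q_m$ for every $m\ge 0$, and the decimation rule \eqref{eqdeci} applied to the cell $w=i^m$ gives
\[
q_{m+1}=\frac{(4-\lambda_{m+1})p+(6-\lambda_{m+1})q_m}{(2-\lambda_{m+1})(5-\lambda_{m+1})}.
\]
Since $\varepsilon=\varnothing$ by Remark \ref{re33}(2), Proposition \ref{thm31}(b) gives $\lambda_m=\lambda_{m+1}(5-\lambda_{m+1})$, equivalently $4-\lambda_m=(1-\lambda_{m+1})(4-\lambda_{m+1})$. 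Combined with the identity $(2-\lambda_{m+1})(5-\lambda_{m+1})=4+(6-\lambda_{m+1})(1-\lambda_{m+1})$, a short induction shows that the relation $(4-\lambda_m)p=4q_m$ is preserved under the recursion; the main obstacle of the whole proof is spotting these two algebraic identities, after which the induction closes in one line. Hence $(4-\lambda_m)p=4q_m$ for all $m\ge 0$, so that
\[
2u(p_i)-u(F_i^m p_j)-u(F_i^m p_k)=2(p-q_m)=\frac{p\,\lambda_m}{2}.
\]
Plugging into \eqref{defdu} and using $5^m\lambda_m\to\tfrac{2\lambda}{3}$ from Proposition \ref{thm31}(a), we obtain
\[
(\mathrm du)_{p_i}=\frac{p}{2}\lim_{m\to\infty}\frac{5^m\lambda_m}{3^m}=0.
\]

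Substituting this vanishing back into $ap+2bq=0$ with $(4-\lambda_0)p=4q$ forces $2a+(4-\lambda_0)b=0$, so
\[
(\mathrm du)_{p_i}=-\frac{b}{2}\Bigl[(4-\lambda_0)u(p_i)-2\bigl(u(p_j)+u(p_k)\bigr)\Bigr].
\]
To pass from this identity to the claimed biconditional it remains to note that $b\neq 0$: otherwise $a=0$ too, so $\tau_\lambda^N\equiv 0$ on $E(\lambda)$, contradicting $\lambda\notin\Lambda^N$. This yields the equivalence $(\mathrm du)_{p_i}=0\Leftrightarrow(4-\lambda_0)u(p_i)-2(u(p_j)+u(p_k))=0$.
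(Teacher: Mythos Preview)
Your argument is correct and follows essentially the same route as the paper: both proofs run the decimation induction showing that the relation $(4-\lambda_m)u(p_i)=2\bigl(u(F_i^mp_j)+u(F_i^mp_k)\bigr)$ propagates along $m$, then feed this into \eqref{defdu}. The only packaging difference is that you invoke the $S_3$-symmetry of $\mathcal{SG}$ up front to reduce $\tau_\lambda^N\circ(\tau_\lambda^D)^{-1}$ to a circulant matrix (so a single symmetric datum $(p,q,q)$ suffices), whereas the paper carries out the induction for an arbitrary $u$ in the $2$-dimensional subspace $\{(4-\lambda_0)x_i-2(x_j+x_k)=0\}$ and then concludes by a dimension count that this subspace coincides with $\tau_\lambda^D\circ(\tau_\lambda^N)^{-1}(\{x_i=0\})$.
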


\begin{proof}
If $(4-\lambda_0)u(p_1)-2(u(p_2)+u(p_3))=0$, we have $(4-\lambda_1)u(p_1)-2(u(F_1p_2)+u(F_1p_3))=0$, since by (\ref{eqdeci}), 
\begin{equation*}
\begin{aligned}
2(u(F_1p_2)+&u(F_1p_3))=2(u(p_{12})+u(p_{13}))\\
&=\frac{2}{(2-\lambda_1)(5-\lambda_1)}\Big(2(4-\lambda_1)u(p_1)+(6-\lambda_1)(u(p_2)+u(p_3))\Big)\\
&=\frac{4(4-\lambda_1)+(6-\lambda_1)(4-\lambda_0)}{(2-\lambda_1)(5-\lambda_1)}u(p_1)\\
&=(4-\lambda_1)u(p_1),
\end{aligned}
\end{equation*}
noticing that $\lambda_0=\lambda_1(5-\lambda_1)$ and $\lambda_1\in(0,2)$. Similarly, for each $m\geq 1$, if  $(4-\lambda_{m})u(p_1)-2(u(F_1^{m}p_2)+u(F_1^{m}p_3))=0$, then $(4-\lambda_{m+1})u(p_1)-2(u(F_1^{m+1}p_2)+u(F_1^{m+1}p_3))=0$. 

So by induction we have $(4-\lambda_m)u(p_1)-2(u(F_1^mp_2)+u(F_1^mp_3))=0$ for any $m\geq 0$, and this gives
\begin{equation*}
\begin{aligned}
(\mathrm{d}u)_{p_1}&=\lim_{m\to\infty}\Big(\frac{5}{3}\Big)^m\big(2u(p_1)-u(F_1^mp_2)-u(F_1^mp_3)\big)\\
&=\lim_{m\to\infty}\Big(\frac{5}{3}\Big)^m\frac{\lambda_m}{2}u(p_1)=0.
\end{aligned}
\end{equation*}

Therefore, we have
\begin{equation*}
\begin{aligned}
&\big\{(x^{(1)},x^{(2)},x^{(3)})^\mathrm{t}\in\mathbb{R}^3:(4-\lambda_0)x^{(1)}-2(x^{(2)}+x^{(3)})=0\big\}\\
=&\tau_\lambda^D\Big(\big\{u\in E(\lambda):(4-\lambda_0)u(p_1)-2(u(p_2)+u(p_3))=0\big\}\Big)\\
\subset &\tau_\lambda^D\Big(\{u\in E(\lambda):(\mathrm{d}u)_{p_1}=0\}\Big)\\
=&\tau_\lambda^D\circ(\tau_\lambda^N)^{-1}\Big(\{(x^{(1)},x^{(2)},x^{(3)})^\mathrm{t}\in\mathbb{R}^3:x^{(1)}=0\}\Big).
\end{aligned}
\end{equation*}
Since by Lemma \ref{thm2.5}, $\tau_\lambda^D,\ \tau_\lambda^N$ are both linear and invertible, the inclusion  ``$\subset$'' above can be replaced by ``$=$'', which gives $(\mathrm du)_{p_1}=0$ if and only if $(4-\lambda_0)u(p_1)-2(u(p_2)+u(p_3))=0$.

The case for other distinct $i,j,k\in S$ follows by symmetry.
\end{proof}

The next proposition is the main result in the first step.

\begin{proposition}\label{lem44}
Suppose $0<\lambda<\lambda_1^D$, and $m\geq 0$ is an integer. Then,
\begin{equation}\label{exC}
\begin{aligned}
&\mathcal{C}_{5^{-m}\lambda}=\mathcal{D}_{\lambda_m}=\{(\xi^{(1)},\xi^{(2)})^\mathrm{t}\in\mathbb{R}\mathrm{P}^2:\\
&\quad\quad\xi^{(2)}<\frac{\lambda_m}{2(6-\lambda_m)},\ \xi^{(2)}>-\sqrt{3}\xi^{(1)}-\frac{\lambda_m}{6-\lambda_m},\ \xi^{(2)}>\sqrt{3}\xi^{(1)}-\frac{\lambda_m}{6-\lambda_m}\},
\end{aligned}
\end{equation}
\begin{equation}\label{exM}
\begin{aligned}
&\mathcal{M}_{5^{-m}\lambda,23}=\{(\xi^{(1)},\xi^{(2)})^\mathrm{t}\in\mathbb{R}\mathrm{P}^2:\\
&\quad\quad\xi^{(2)}<\frac{3-\lambda_{m+1}}{6-\lambda_{m+1}}\cdot\frac{\lambda_m}{2(6-\lambda_m)},\ \xi^{(2)}>-\frac{5-\lambda_{m+1}}{3-\lambda_{m+1}}\cdot\sqrt{3}\xi^{(1)}-\frac{\lambda_m}{6-\lambda_m},\\ &\quad\quad\xi^{(2)}>\frac{5-\lambda_{m+1}}{3-\lambda_{m+1}}\cdot\sqrt{3}\xi^{(1)}-\frac{\lambda_m}{6-\lambda_m}\},\\
&\mathcal{M}_{5^{-m}\lambda,31}=\mathcal{J}(\mathcal{M}_{5^{-m}\lambda,23}),\quad \mathcal{M}_{5^{-m}\lambda,12}=\mathcal{J}^2(\mathcal{M}_{5^{-m}\lambda,23}).
\end{aligned}
\end{equation}

Moreover, we have $(\mathcal{T}_{5^{-m}\lambda}^i)^{-1}(\mathcal{C}_{5^{-m-1}\lambda})=(\mathcal{P}_{\lambda_{m+1}}^i)^{-1}(\mathcal{D}_{\lambda_{m+1}})=\mathcal{G}_{\lambda_m,i}$ for $i\in S$, and
\begin{equation}\label{decom}
\begin{aligned}
\mathcal{C}_{5^{-m}\lambda}=\big(\bigcup_{i\in S}(\mathcal{T}_{5^{-m}\lambda}^i)^{-1}(\mathcal{C}_{5^{-m-1}\lambda})\big)\cup\big(\bigcup_{ij\in S_1}\mathcal{L}_{\psi^{-1}(5^{-m}\lambda),ij}\big)\cup\{\boldsymbol{\theta}\}
\end{aligned}
\end{equation}
is a disjoint decomposition.
In addition, $\mathcal{L}_{\psi^{-1}(5^{-m}\lambda),ij}\subset\mathcal{M}_{5^{-m}\lambda,ij}\subset\mathcal{C}_{5^{-m}\lambda}$ and $\big(\partial\mathcal{M}_{5^{-m}\lambda,ij}\big)\setminus\{\boldsymbol{\zeta}_{\psi^{-1}(5^{-m}\lambda),ij}\}\subset\mathcal{C}_{5^{-m}\lambda}$ for any $ij\in S_1$.
\end{proposition}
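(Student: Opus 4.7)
Since every assertion concerns a scale $5^{-m}\lambda$ with $0<\lambda<\lambda_1^D$, and since by Remark \ref{re33}(2) the parameter $\lambda_m=\psi^{-1}(5^{-m}\lambda)$ plays the role of ``$\lambda_0$'' at that scale (with $\lambda_m\in(0,6)$ and $\lambda_{m+k}\in(0,2)$ for $k\geq 1$), it is enough to prove everything once at level $0$, regarding $\lambda_m$ as the basepoint and $\lambda_{m+1}$ as its first $\varphi_{-1}$-preimage. The whole proposition then reduces to three projective-geometric computations.

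\textbf{Step 1 (sign characterization of $\mathrm d u|_{V_0}$).} Set
\[v_k:=(4-\lambda_k)u(p_1)-2\bigl(u(F_1^kp_2)+u(F_1^kp_3)\bigr).\]
The decimation rule (\ref{eqdeci}), together with the identity $(4-\lambda_{k+1})(1-\lambda_{k+1})=4-\lambda_k$ (the same cancellation used in Lemma \ref{pro43}, implied by $\lambda_k=\lambda_{k+1}(5-\lambda_{k+1})$), yields the linear recursion $v_{k+1}=\frac{6-\lambda_{k+1}}{(2-\lambda_{k+1})(5-\lambda_{k+1})}v_k$. Each factor is positive and differs from $3/5$ by $O(\lambda_k)=O(5^{-k})$, so $(5/3)^kv_k$ converges absolutely to a positive multiple of $v_0$. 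Combined with $(5/3)^k\lambda_k\to 0$ and (\ref{defdu}), this gives $(\mathrm d u)_{p_i}=C\bigl((6-\lambda_m)u(p_i)-2\,\mathbf 1^{\mathrm t}\mathbf a\bigr)$ with a single positive constant $C$ (independent of $u$; the analogue at $p_2,p_3$ follows by symmetry). Hence $\mathcal C_{5^{-m}\lambda}$ is the $\pi$-image of the cone on which the three linear forms $(6-\lambda_m)a_i-2\,\mathbf 1^{\mathrm t}\mathbf a$ share a common sign. Substituting $a_i=a\cdot a'_i$ via (\ref{eq4.4}) and writing the three inequalities in $(\xi^{(1)},\xi^{(2)})$ produces exactly the three edges defining $\mathcal D_{\lambda_m}$ in (\ref{exC}); the two cases ``all positive'' and ``all negative'' correspond to $\mathbf 1^{\mathrm t}\mathbf a<0$ and $>0$ respectively, and jointly fill the open triangle.

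\textbf{Step 2 (computation of $\mathcal M_{5^{-m}\lambda,ij}$).} Expand $u(p_{23})-u(p_i)$ for $i\in S$ using (\ref{eqdeci}), divide by the positive factor $(2-\lambda_{m+1})(5-\lambda_{m+1})$, convert to $\xi$-coordinates via the $a'_i$ formulas, and apply the key algebraic identity $6-\lambda_m=(2-\lambda_{m+1})(3-\lambda_{m+1})$. Each of the three differences collapses to a linear form in $(\xi^{(1)},\xi^{(2)})$, and the three resulting half-planes are precisely those listed in (\ref{exM}). The rotated statements for $\mathcal M_{5^{-m}\lambda,31}$ and $\mathcal M_{5^{-m}\lambda,12}$ then follow by conjugation with $\mathcal J$, which cyclically permutes $p_1,p_2,p_3$.

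\textbf{Step 3 (partition and containments).} The identity $(\mathcal T_{5^{-m}\lambda}^i)^{-1}(\mathcal C_{5^{-m-1}\lambda})=\mathcal G_{\lambda_m,i}$ follows at once from $\mathcal T_{5^{-m}\lambda}^i=\mathcal P_{\lambda_{m+1}}^i$, from Step 1 applied at level $m+1$, and from Lemma \ref{pro41r}(a) (since $\lambda_{m+1}\in(0,2)$ and $\Phi(\lambda_{m+1})=\lambda_m$). The disjoint decomposition (\ref{decom}) is then the tautological partition $\mathcal D_{\lambda_m}=\{\boldsymbol\theta\}\cup\bigcup_{ij\in S_1}\mathcal L_{\lambda_m,ij}\cup\bigcup_{i\in S}\mathcal G_{\lambda_m,i}$. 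For the remaining containments, compare the lines in (\ref{exM}) with those of $\mathcal D_{\lambda_m}$: the slope inequality $\frac{5-\lambda_{m+1}}{3-\lambda_{m+1}}>1$ places the two ``slanted'' edges of $\mathcal M_{5^{-m}\lambda,ij}$ strictly inside $\mathcal D_{\lambda_m}$ except at the common vertex $\boldsymbol\zeta_{\lambda_m,ij}$, while $\frac{3-\lambda_{m+1}}{6-\lambda_{m+1}}<1$ places the ``horizontal'' edge strictly below the top edge of $\mathcal D_{\lambda_m}$; together these give both $\mathcal M_{5^{-m}\lambda,ij}\subset\mathcal C_{5^{-m}\lambda}$ and $\partial\mathcal M_{5^{-m}\lambda,ij}\setminus\{\boldsymbol\zeta_{\lambda_m,ij}\}\subset\mathcal C_{5^{-m}\lambda}$. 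The inclusion $\mathcal L_{\lambda_m,ij}\subset\mathcal M_{5^{-m}\lambda,ij}$ is verified by direct substitution ($\xi^{(1)}=0$ and $-\frac{\lambda_m}{6-\lambda_m}<\xi^{(2)}<0$ satisfies each of the three inequalities of (\ref{exM})).

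\textbf{Main obstacle.} The only genuinely nontrivial point is Step 1: obtaining the closed-form identification of $(\mathrm d u)_{p_i}$ with a positive constant multiple of $(6-\lambda_m)u(p_i)-2\,\mathbf 1^{\mathrm t}\mathbf a$. Once this is secured, Steps 2 and 3 are algebraic simplifications in projective coordinates, driven by the single identity $6-\lambda_m=(2-\lambda_{m+1})(3-\lambda_{m+1})$, and the geometric picture (Figure \ref{figure4}) falls out automatically.
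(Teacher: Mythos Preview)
Your proposal is correct and follows essentially the same route as the paper. The paper also reduces to $m=0$, uses Lemma \ref{pro43} (the zero-set version of your Step 1) together with the observation $\boldsymbol\theta\in\mathcal C_\lambda$ to pin down $\mathcal C_\lambda=\mathcal D_{\lambda_0}$, computes $\mathcal M_{\lambda,23}$ by expanding $u(p_{23})-u(p_i)$ via (\ref{eqdeci}) and projecting with (\ref{pixxi}), and then invokes Lemma \ref{pro41r}(a) for the partition (\ref{decom}), declaring the final containments ``obvious''.

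The one mild difference worth noting is your Step 1: you run the recursion $v_{k+1}=\frac{6-\lambda_{k+1}}{(2-\lambda_{k+1})(5-\lambda_{k+1})}v_k$ for arbitrary $v_0$ and conclude $(\mathrm d u)_{p_i}=C\,v_0$ with a single positive constant $C$, thereby obtaining the \emph{sign} of each Neumann derivative directly. The paper instead runs the same recursion only in the degenerate case $v_0=0$ (this is Lemma \ref{pro43}), deduces that the zero set of $(\mathrm d u)_{p_i}$ is the hyperplane $(4-\lambda_0)a_i-2(a_j+a_k)=0$ via a dimension argument, and then selects the correct half-space by the symmetry observation $\boldsymbol\theta\in\mathcal C_\lambda$. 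Your version is slightly more self-contained (no dimension counting, no separate ``interior point'' check), at the cost of having to justify convergence of the infinite product $\prod_k\frac{5(6-\lambda_k)}{3(2-\lambda_k)(5-\lambda_k)}$; the paper's version avoids that analytic step but needs the extra linear-algebraic remark. Either way the content is the same.
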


\begin{proof} It suffices to prove the $m=0$ case.

First we prove \eqref{exC}, i.e. $\mathcal{C}_{\lambda}=\mathcal{D}_{\lambda_0}$. Noticing that for any $\mathbf{b}\in\mathbb{R}^3\setminus\{{\bf 0}\}$, by \eqref{eq4.4}, we have
\begin{equation}\label{pixxi}
\pi(\{\mathbf{x}\in\mathbb{R}^3\setminus\{{\bf 0}\}:\mathbf{b}^\mathrm{t}\mathbf{x}=0\})=\Big\{\boldsymbol{\xi}\in\mathbb{R}\mathrm{P}^2:\ \mathbf{b}^\mathrm{t}(\mathbf{1},Q)^{-\mathrm{t}}\begin{pmatrix}1\\ \boldsymbol{\xi}\end{pmatrix}=0\Big\},
\end{equation}
and a direct calculation gives
\begin{equation*}
(4-\lambda_0,-2,-2)(\mathbf{1},Q)^{-\mathrm{t}}=\frac{1}{3}\big(-\lambda_0,0,2(6-\lambda_0)\big).
\end{equation*}
So by Lemma \ref{pro43}, we have
\begin{equation*}
\begin{aligned}
&\pi\circ\tau_\lambda^D\circ(\tau_\lambda^N)^{-1}\Big(\{(x^{(1)},x^{(2)},x^{(3)})^\mathrm{t}\in\mathbb{R}^3\setminus\{{\bf 0}\}:x^{(1)}=0\}\Big)\\
=&\Big\{\boldsymbol{\xi}\in\mathbb{R}\mathrm{P}^2:\ \big(-\lambda_0,0,2(6-\lambda_0)\big)\begin{pmatrix}1\\ \boldsymbol{\xi}\end{pmatrix}=0\Big\}\\
=&\Big\{(\xi^{(1)},\xi^{(2)})^\mathrm{t}\in\mathbb{R}^2:\ \xi^{(2)}-\frac{\lambda_0}{2(6-\lambda_0)}=0\Big\}\cup\{[(1,0)^\mathrm{t}]_\infty\}.
\end{aligned}
\end{equation*}
Similarly, $(-2,4-\lambda_0,-2)(\mathbf{1},Q)^{-\mathrm{t}}=\frac{1}{3}\big(-\lambda_0,-\sqrt{3}(6-\lambda_0),-(6-\lambda_0)\big)$ gives
\begin{equation*}
\begin{aligned}
&\pi\circ\tau_\lambda^D\circ(\tau_\lambda^N)^{-1}\Big(\{(x^{(1)},x^{(2)},x^{(3)})^\mathrm{t}\in\mathbb{R}^3\setminus\{{\bf 0}\}:x^{(2)}=0\}\Big)\\
=&\{(\xi^{(1)},\xi^{(2)})^\mathrm{t}\in\mathbb{R}^2:\ \sqrt{3}\xi^{(1)}+\xi^{(2)}+\frac{\lambda_0}{6-\lambda_0}=0\}\cup\{[(1,-\sqrt{3})^\mathrm{t}]_\infty\},
\end{aligned}
\end{equation*}
and $(-2,-2,4-\lambda_0)(\mathbf{1},Q)^{-\mathrm{t}}=\frac{1}{3}\big(-\lambda_0,\sqrt{3}(6-\lambda_0),-(6-\lambda_0)\big)$ gives
\begin{equation*}
\begin{aligned}
&\pi\circ\tau_\lambda^D\circ(\tau_\lambda^N)^{-1}\Big(\{(x^{(1)},x^{(2)},x^{(3)})^\mathrm{t}\in\mathbb{R}^3\setminus\{{\bf 0}\}:x^{(3)}=0\}\Big)\\
=&\{(\xi^{(1)},\xi^{(2)})^\mathrm{t}\in\mathbb{R}^2:\ -\sqrt{3}\xi^{(1)}+\xi^{(2)}+\frac{\lambda_0}{6-\lambda_0}=0\}\cup\{[(1,\sqrt{3})^\mathrm{t}]_\infty\}.
\end{aligned}
\end{equation*}
Recall that $\mathbf{C}=\{(a_1,a_2,a_3)\in\mathbb{R}^3:\text{either }a_i>0\text{ for all }i, \text{ or }a_i<0\text{ for all }i\}$ and $\mathcal{C}_\lambda=\pi\circ\tau_\lambda^D\circ(\tau_\lambda^N)^{-1}(\mathbf{C})$. By the symmetry of $\mathcal{SG}$, we have $\boldsymbol{\theta}\in\mathcal{C}_\lambda$.  So the above calculation gives
$\mathcal{C}_{\lambda}=\mathcal{D}_{\lambda_0}$.

Next we prove \eqref{exM}. Noticing that for $i\in S$, by (\ref{eqdeci}) and (\ref{pixxi}), $\pi\circ\tau_\lambda^D\Big(\big\{u\in E(\lambda)\setminus\{0\}:u(p_{23})=u(p_i)\big\}\Big)=\Big\{\boldsymbol{\xi}\in\mathbb{R}\mathrm{P}^2:\ \mathbf{b}_i^\mathrm{t}(\mathbf{1},Q)^{-\mathrm{t}}\begin{pmatrix}1\\ \boldsymbol{\xi}\end{pmatrix}=0\Big\}$ with 
\begin{equation*}
\begin{aligned}
&\mathbf{b}_1^\mathrm{t}=\frac{1}{(2-\lambda_1)(5-\lambda_1)}\big(2,4-\lambda_1,4-\lambda_1\big)-(1,0,0),\\
&\mathbf{b}_2^\mathrm{t}=\frac{1}{(2-\lambda_1)(5-\lambda_1)}\big(2,4-\lambda_1,4-\lambda_1\big)-(0,1,0),\\
&\mathbf{b}_3^\mathrm{t}=\frac{1}{(2-\lambda_1)(5-\lambda_1)}\big(2,4-\lambda_1,4-\lambda_1\big)-(0,0,1).
\end{aligned}
\end{equation*}
A direct calculation gives
\begin{equation*}
\begin{aligned}
&\mathbf{b}_1^\mathrm{t}(\mathbf{1},Q)^{-\mathrm{t}}=\frac{1}{3}\big(\frac{\lambda_1}{2-\lambda_1},0,\frac{-2(6-\lambda_1)}{5-\lambda_1}\big),\\
&\mathbf{b}_2^\mathrm{t}(\mathbf{1},Q)^{-\mathrm{t}}=\frac{1}{3}\big(\frac{\lambda_1}{2-\lambda_1},\sqrt{3},\frac{3-\lambda_1}{5-\lambda_1}\big),\\
&\mathbf{b}_3^\mathrm{t}(\mathbf{1},Q)^{-\mathrm{t}}=\frac{1}{3}\big(\frac{\lambda_1}{2-\lambda_1},-\sqrt{3},\frac{3-\lambda_1}{5-\lambda_1}\big).
\end{aligned}
\end{equation*}
Similar to the calculation for $\mathcal{C}_{\lambda}=\mathcal{D}_{\lambda_0}$, we obtain \eqref{exM} for $\mathcal{M}_{\lambda,23}$, noticing that $\lambda_0=\Phi(\lambda_1)$ and $\boldsymbol{\theta}\in\mathcal{M}_{\lambda,23}$. In addition, by symmetry, $\mathcal{M}_{\lambda,31}=\mathcal J(\mathcal{M}_{\lambda,23}),\ \mathcal{M}_{\lambda,12}=\mathcal J^2(\mathcal{M}_{\lambda,23})$ follows from $\mathbf{M}_{\lambda,31}=J(\mathbf{M}_{\lambda,23}),\ \mathbf{M}_{\lambda,12}=J^2(\mathbf{M}_{\lambda,23})$.

Finally, noticing that $\mathcal{C}_{\lambda}=\mathcal{D}_{\lambda_0}$, $\mathcal{T}_{\lambda}^i=\mathcal{P}_{\lambda_{1}}^i$ and $\lambda_0=\Phi(\lambda_{1})$, by Lemma \ref{pro41r}-(a) we have $(\mathcal{T}_{\lambda}^i)^{-1}(\mathcal{C}_{5^{-1}\lambda})=(\mathcal{P}_{\lambda_{1}}^i)^{-1}(\mathcal{D}_{\lambda_{1}})=\mathcal{G}_{\lambda_0,i}$ for $i\in S$. Then we have (\ref{decom}) since
\begin{equation*}
\mathcal{D}_{\lambda_0}=\big(\bigcup_{i\in S}\mathcal{G}_{\lambda_0,i}\big)\cup\big(\bigcup_{ij\in S_1}\mathcal{L}_{\lambda_0,ij}\big)\cup\{\boldsymbol{\theta}\}.
\end{equation*}
is a disjoint decomposition. Moreover, $\mathcal{L}_{\psi^{-1}(\lambda),ij}\subset\mathcal{M}_{\lambda,ij}\subset\mathcal{C}_{\lambda}$ and $\big(\partial\mathcal{M}_{\lambda,ij}\big)\setminus\{\boldsymbol{\zeta}_{\psi^{-1}(\lambda),ij}\}\subset\mathcal{C}_{\lambda}$ is obvious.
\end{proof}

\subsection{The second step} We begin with some lemmas.

\begin{lemma}\label{matching} 
For $u\in\mathscr{D}$ and $ij\in S_1$, we have
\begin{equation*}
\big(\mathrm{d}(u\circ F_i)\big)_{p_j}+\big(\mathrm{d}(u\circ F_j)\big)_{p_i}=0.
\end{equation*}
\end{lemma}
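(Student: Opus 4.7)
The plan is to compute the sum of the two normal derivatives directly from the defining formula \eqref{defdu} and recognize it (up to a harmless factor) as the value of the discrete Laplacian $H_{m+1}u$ at the interior vertex $p_{ij}$, which must vanish in the limit because $u\in\mathscr{D}$.

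First I would unfold the definitions. Let $\{i,j,k\}=S$. By \eqref{defdu},
\begin{equation*}
(\mathrm{d}(u\circ F_i))_{p_j}=\lim_{m\to\infty}(5/3)^m\bigl(2u(F_ip_j)-u(F_iF_j^m p_i)-u(F_iF_j^m p_k)\bigr),
\end{equation*}
and symmetrically for $(\mathrm{d}(u\circ F_j))_{p_i}$ with the roles of $i,j$ swapped. Since $F_ip_j=F_jp_i=p_{ij}$, the sum collapses to
\begin{equation*}
\lim_{m\to\infty}(5/3)^m\bigl[4u(p_{ij})-u(F_iF_j^m p_i)-u(F_iF_j^m p_k)-u(F_jF_i^m p_j)-u(F_jF_i^m p_k)\bigr].
\end{equation*}

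The key geometric step is to identify the four arguments on the right as exactly the neighbours of $p_{ij}$ in $V_{m+1}$. Because $p_j$ is the attracting fixed point of $F_j$ we have $F_j^m p_j=p_j$, so the level-$(m+1)$ cell of $F_i\mathcal{SG}$ containing $p_{ij}$ is $F_iF_j^m\mathcal{SG}$, whose boundary is $\{F_iF_j^m p_i,\,p_{ij},\,F_iF_j^m p_k\}$; by symmetry the cell on the $F_j$-side is $F_jF_i^m\mathcal{SG}$. Hence the four level-$(m+1)$ neighbours of $p_{ij}$ are precisely $F_iF_j^m p_i,\ F_iF_j^m p_k,\ F_jF_i^m p_j,\ F_jF_i^m p_k$, and by \eqref{defdeltam} the bracket above equals $-(\Delta_{m+1}u)(p_{ij})$. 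Using $H_{m+1}=(5/3)^{m+1}\Delta_{m+1}$ we get
\begin{equation*}
(\mathrm{d}(u\circ F_i))_{p_j}+(\mathrm{d}(u\circ F_j))_{p_i}=-\tfrac{3}{5}\lim_{m\to\infty}(H_{m+1}u)(p_{ij}).
\end{equation*}

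To conclude I would invoke \eqref{defdelta}: for $p_{ij}\in V_{m+1}\setminus V_0$ one has $\mu_{m+1,p_{ij}}^{-1}(H_{m+1}u)(p_{ij})\to(\Delta u)(p_{ij})$, while $\mu_{m+1,p_{ij}}=2/3^{m+2}\to 0$ and $\Delta u$ is bounded on $K$ since $u\in\mathscr{D}$. Therefore $(H_{m+1}u)(p_{ij})\to 0$ and the sum vanishes.

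No serious obstacle is expected; the only point that must be handled with care is the indexing of the level-$(m+1)$ neighbours of $p_{ij}$, for which the fixed-point identity $F_j^mp_j=p_j$ is the crucial input. Once the sum is matched with $(H_{m+1}u)(p_{ij})$, the conclusion is immediate.
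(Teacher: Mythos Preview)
Your proof is correct and follows essentially the same approach as the paper's own argument: both identify the sum of the two normal derivatives with the discrete Laplacian $(\Delta_{m+1}u)(p_{ij})$ at the interior vertex $p_{ij}$, and then use $u\in\mathscr{D}$ to conclude that the appropriately scaled quantity vanishes in the limit. The paper's write-up simply runs the computation in the opposite direction, starting from the finite value $\Delta u(p_{ij})$ and factoring out the normal-derivative expressions against the diverging factor $3^{m-1}$, but the content is identical.
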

\begin{proof}
By (\ref{defdeltam}) and (\ref{defdelta}), for distinct $i,j,k\in S$ we have
\begin{equation*}
\begin{aligned}
\Delta u(p_{ij})=\frac{3}{2}\lim_{m\to\infty}5^m&\big(u(F_iF_j^{m-1}p_k)+u(F_iF_j^{m-1}p_i)\\
&+u(F_jF_i^{m-1}p_k)+u(F_jF_i^{m-1}p_j)-4u(p_{ij})\big)\\
=\frac{3}{2}\lim_{m\to\infty}5\cdot 3^{m-1}\Big(\big(\frac{5}{3}\big)^{m-1}&\big((u\circ F_i)(F_j^{m-1}p_k)+(u\circ F_i)(F_j^{m-1}p_i)-2(u\circ F_i)(p_{j})\big)\\
+\big(\frac{5}{3}\big)^{m-1}\big((u&\circ F_j)(F_i^{m-1}p_k)+(u\circ F_j)(F_i^{m-1}p_j)-2(u\circ F_j)(p_{i})\big)\Big),
\end{aligned}
\end{equation*}
so the lemma follows directly.
\end{proof}

\begin{lemma}\label{symfuncs}
Let $u\in E(\lambda)\setminus\{0\}$ with level of birth $m_0=1$ and $\lambda_1\notin\{2,5,6\}$. Then,

(a). $u$ is constant on $\overline{p_{23}p_{31}p_{12}}$ if $u(p_1)=u(p_2)=u(p_3)\neq 0$;

(b). $u$ is constant on $\overline{p_ip_j}$ for some $ij\in S_1$ if 
\[u(p_i)=u(p_j)\neq 0,\quad u(p_k)=\Big(1-\frac{\Phi(\lambda_1)}{2}\Big)u(p_i),\]

\noindent where $\overline{p_{23}p_{31}p_{12}}$ denotes the boundary of the triangle with vertices $p_{23},\, p_{31},\, p_{12}$, and $\overline{p_ip_j}$ denotes the line segment with end points $p_i,\,p_j$.
\end{lemma}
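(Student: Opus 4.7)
The plan is to first reduce (a) to (b), then prove (b) by an inductive descent through the cell structure of $\mathcal{SG}$. For (a), the spectral decimation formula (\ref{eqdeci}) applied to the hypothesis $u(p_1)=u(p_2)=u(p_3)=c$ immediately yields $u(p_{12})=u(p_{23})=u(p_{31})=\frac{2c}{2-\lambda_1}=:d$. Then each $u\circ F_i$ is an eigenfunction on $\mathcal{SG}$ with level of birth $1$ and first decimation eigenvalue $\lambda_2\notin\{2,5,6\}$ (by Proposition \ref{thm31}(a)), whose boundary values are a permutation of $(c,d,d)$; these fit the hypothesis of (b) because $c=\frac{d(2-\lambda_1)}{2}=d\bigl(1-\tfrac{\Phi(\lambda_2)}{2}\bigr)$, using $\Phi(\lambda_2)=\lambda_1$, which in turn comes from the decimation relation $\lambda_{m+1}=\varphi_{-1}(\lambda_m)$. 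Applying (b) to each $u\circ F_i$ yields $u\equiv d$ on $F_i(\overline{p_jp_k})$, and the union of these three segments is precisely $\overline{p_{23}p_{31}p_{12}}$.

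For (b), assume without loss of generality that $i=1$, $j=3$, $k=2$. The plan is to show by induction on $m\geq 0$ that (i) $u$ takes the value $c$ at every vertex of $V_m\cap\overline{p_1p_3}$, and (ii) for every $w\in\{1,3\}^m$, $u\circ F_w$ satisfies the hypothesis of (b) with first decimation eigenvalue $\lambda_{m+1}$. By the continuity of $u$ and density of $V_*$ in $\mathcal{SG}$, (i) will yield $u\equiv c$ on $\overline{p_1p_3}$. The base case $m=0$ is the hypothesis. The inductive step relies on the following single computation: if an eigenfunction $v$ on $\mathcal{SG}$ with first decimation eigenvalue $\mu\notin\{2,5,6\}$ has boundary values $v(p_1)=v(p_3)=c$ and $v(p_2)=(1-\Phi(\mu)/2)c$, then a direct substitution into (\ref{eqdeci}) gives
\begin{equation*}
v(p_{13})=c\quad\text{and}\quad v(p_{12})=v(p_{23})=c\bigl(1-\tfrac{\mu}{2}\bigr).
\end{equation*}
Applied to $v=u\circ F_w$ (with $\mu=\lambda_{m+1}$), the first equality produces the new midpoint values $u(F_wp_{13})=c$, which together with the inductive values at $F_wp_1$ and $F_wp_3$ exhaust $V_{m+1}\cap\overline{p_1p_3}$; the second equality shows that $u\circ F_w\circ F_1$ and $u\circ F_w\circ F_3$ each have boundary $(c,c(1-\Phi(\lambda_{m+2})/2),c)$ (using $\Phi(\lambda_{m+2})=\lambda_{m+1}$), so they satisfy the hypothesis of (b) at level $m+1$, closing the induction.

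The heart of the argument is the single computation above. The first equality $v(p_{13})=c$ collapses via the identity $\mu^2-7\mu+10=(2-\mu)(5-\mu)$, while the second requires the polynomial identity
\begin{equation*}
-\mu^3+9\mu^2-24\mu+20=(2-\mu)^2(5-\mu),
\end{equation*}
which I expect to be the main obstacle: it is this specific factorization that guarantees the coefficient $(1-\Phi(\lambda_{m+1})/2)$ appearing in the hypothesis of (b) reproduces itself as $(1-\Phi(\lambda_{m+2})/2)$ one level deeper. Without this exact factorization the recursion would break, and the fine-tuned nature of the hypothesis in (b) is precisely what makes the descent self-propagate and force $u$ to be constant along $\overline{p_ip_j}$.
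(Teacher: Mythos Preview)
Your proof is correct and follows essentially the same approach as the paper: both arguments prove (b) by showing that the boundary configuration $\bigl(c,\,(1-\Phi(\mu)/2)c,\,c\bigr)$ reproduces itself one level down via the decimation formula \eqref{eqdeci} (the paper's computation $u(p_{23})=1$, $u(p_{12})=u(p_{31})=1-\lambda_1/2$ is exactly your computation with $c=1$, $\mu=\lambda_1$), and both reduce (a) to (b) by first computing $u(p_{ij})$ and then applying (b) to each $u\circ F_i$. Your presentation is simply more explicit about the induction and the underlying polynomial identities.
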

\begin{proof} We first prove (b). Assume $u(p_2)=u(p_3)=1$ and $u(p_1)=1-\frac{\Phi(\lambda_1)}{2}$. By  (\ref{eqdeci}), a direct calculation gives
\begin{align*}
&u(p_{23})=\frac{2(4-\lambda_1)+2(1-\frac{\Phi(\lambda_1)}{2})}{(2-\lambda_1)(5-\lambda_1)}=1,\\
&u(p_{12})=u(p_{31})=\frac{(4-\lambda_1)(2-\frac{\Phi(\lambda_1)}{2})+2}{(2-\lambda_1)(5-\lambda_1)}=1-\frac{\lambda_1}{2}.
\end{align*}
Then, using induction we have 
\begin{equation*}
u(F_wp_{23})=1,\quad u(F_wp_{12})=u(F_wp_{31})=1-\frac{\lambda_{m+1}}{2}
\end{equation*}
for each $w\in\{2,3\}^m$ and each $m\geq 0$. Therefore, (b) follows from the continuity of $u$. 

Next, assume $u(p_1)=u(p_2)=u(p_3)=1-\frac{\lambda_1}2$. Still by (\ref{eqdeci}), a calculation gives
\begin{equation*}
u(p_{12})=u(p_{23})=u(p_{31})=(1-\frac{\lambda_1}2)\frac{2(4-\lambda_1)+2}{(2-\lambda_1)(5-\lambda_1)}=1.
\end{equation*}
Then by (b) $u$ takes constant $1$ on $\overline{p_{23}p_{31}p_{12}}$ by looking at each $u\circ F_i$.
\end{proof}

\begin{remark}\label{symfuncsre}
The condition (a) in the previous lemma is equivalent to $\tau(u)=\boldsymbol{\theta}$ and condition (b) is equivalent to $\tau(u)=\boldsymbol{\zeta}_{\Phi(\lambda_1),ij}$ when $\Phi(\lambda_1)\in(0,6)$, since by \eqref{defpi},
\begin{equation*}
\pi\big((1-\frac{\Phi(\lambda_1)}{2},1,1)^{\mathrm t}\big)=\frac{Q^{\mathrm t}(1-\frac{\Phi(\lambda_1)}{2},1,1)^{\mathrm t}}{3-\frac{\Phi(\lambda_1)}{2}}=\big(0,-\frac{\Phi(\lambda_1)}{6-\Phi(\lambda_1)}\big)^{\mathrm t}.
\end{equation*}
\end{remark}

\begin{lemma}\label{gamma-delta}
Let $u\in E(\lambda)\setminus\{0\}$ with level of birth $m_0=1$ and $\lambda_1\notin\{2,5,6\}$. Suppose that there exists an extreme set $A$ of $u$ and $ij\in S_1$ such that $p_{ij}\in A$. Then,
\begin{equation}\label{gamma-delta00}
u(p_i)=u(p_j),\quad\frac{1}{2-\lambda_1}\big(2u(p_k)-(2-\Phi(\lambda_1))u(p_i)\big)\big(u(p_i)-u(p_k)\big)\geq 0.
\end{equation}
\end{lemma}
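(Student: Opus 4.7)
My plan is to split the lemma into two parts: first establish the symmetry $u(p_i)=u(p_j)$, then reduce the inequality to a clean sign-propagation statement along cell-accumulating sequences. Throughout I assume WLOG that $ij=23$ and that $A$ is a local maximum with value $c:=u(p_{23})>0$ (positivity by the Remark following Definition \ref{def1}; the local-minimum case follows by passing to $-u$).

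For Part~1, I exploit Lemma \ref{matching} together with Lemma \ref{pro43}. The local-maximum property $u\le c$ on some $\delta$-neighborhood $A_\delta$, combined with $F_2F_3^m p_1, F_2F_3^m p_2, F_3F_2^m p_1, F_3F_2^m p_3\to p_{23}$ as $m\to\infty$, implies these points lie in $A_\delta$ for large $m$ with $u$-values $\le c$. Substituting into \eqref{defdu} applied to $u\circ F_2$ at $p_3$ and to $u\circ F_3$ at $p_2$ yields $(\mathrm d(u\circ F_2))_{p_3}\ge 0$ and $(\mathrm d(u\circ F_3))_{p_2}\ge 0$; Lemma \ref{matching} then forces both to vanish. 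Applying Lemma \ref{pro43} to $u\circ F_2,u\circ F_3\in E(\lambda/5)$ (each with level-$0$ decimation parameter $\lambda_1$) translates these into
\[(4-\lambda_1)\,u(p_{23}) \;=\; 2\bigl(u(p_2)+u(p_{12})\bigr) \;=\; 2\bigl(u(p_3)+u(p_{31})\bigr).\]
Subtracting, and using the extension-rule identity $u(p_{12})-u(p_{31})=(u(p_2)-u(p_3))/(5-\lambda_1)$, gives $(u(p_2)-u(p_3))(6-\lambda_1)/(5-\lambda_1)=0$; since $\lambda_1\neq 6$, $u(p_2)=u(p_3)$.

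For Part~2, set $a:=u(p_i)=u(p_j)$, $b:=u(p_k)$, $d:=u(p_{ik})=u(p_{jk})$ (the symmetry of $d$ a consequence of the extension rule once $u(p_i)=u(p_j)$). A short calculation yields $a+d=(4-\lambda_1)c/2$ (equivalently $c-d=(2a-(2-\lambda_1)c)/2$), $2b-(2-\Phi(\lambda_1))a=(2-\lambda_1)(5-\lambda_1)(c-a)$, and $a-b=(5-\lambda_1)(c-d)$, so the stated inequality is equivalent to $(c-a)(c-d)\ge 0$. I then introduce the sequences $v_m:=u(F_2F_3^m p_2)$ and $w_m:=u(F_2F_3^m p_1)$, with $v_0=a$, $w_0=d$, both converging to $c$. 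Applying the extension rule to $u\circ F_{23^m}\in E(\lambda/5^{m+1})$ (level-$0$ parameter $\lambda_{m+1}$, extended with $\lambda_{m+2}$) produces the coupled recursion
\[v_{m+1}=\frac{(4-\lambda_{m+2})(v_m+c)+2w_m}{(2-\lambda_{m+2})(5-\lambda_{m+2})},\qquad w_{m+1}=\frac{(4-\lambda_{m+2})(w_m+c)+2v_m}{(2-\lambda_{m+2})(5-\lambda_{m+2})}.\]
The central claim, proved by induction on $m$, is the closed-form identity
\[v_m-c=\frac{a-c}{\prod_{k=2}^{m+1}(5-\lambda_k)},\qquad w_m-c=\frac{d-c}{\prod_{k=2}^{m+1}(5-\lambda_k)}.\]
Since each $5-\lambda_k>0$, the signs of $v_m-c$ and $w_m-c$ are independent of $m$; but the local-maximum hypothesis forces $v_m,w_m\le c$ for all sufficiently large $m$, so $a\le c$ and $d\le c$, giving $(c-a)(c-d)\ge 0$.

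The main obstacle is verifying this closed-form identity: the raw recursion does not obviously telescope, and the sum/difference decomposition alone does not deliver it. The key algebraic simplification that drives the induction is $c\lambda_1-2(a-d)=4(c-a)$ (and its analog $c\lambda_1+2(a-d)=4(c-d)$ for $w_m$), which follows by combining $a+d=(4-\lambda_1)c/2$ with the iterated decimation relation $\lambda_1=\lambda_{m+1}\prod_{k=2}^{m+1}(5-\lambda_k)$. Once this is in hand the inductive step collapses cleanly, and the remainder of the argument is routine bookkeeping.
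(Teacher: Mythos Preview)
Your overall strategy mirrors the paper's: vanishing of the two normal derivatives at $p_{23}$ via the matching condition, followed by explicit propagation formulas along the cells $F_2F_3^{m}\mathcal{SG}$. Part~2 is essentially correct and equivalent to the paper's equation \eqref{gamma-delta02pre}; your product form $\prod_{k=2}^{m+1}(5-\lambda_k)$ is just $\lambda_1/\lambda_{m+1}$ in disguise, and the key identity $\lambda_1=\lambda_{m+1}\prod_{k=2}^{m+1}(5-\lambda_k)$ that you flag is exactly what makes the induction close. One cosmetic point: you do not actually need $5-\lambda_k>0$ for each $k$ (this is not guaranteed by the bare hypothesis $\lambda_1\notin\{2,5,6\}$); the product $(v_m-c)(w_m-c)=(a-c)(d-c)/\prod(5-\lambda_k)^2$ already gives $(c-a)(c-d)\ge 0$ directly, which is all the lemma asserts.

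There is, however, a genuine gap in Part~1. You invoke Lemma~\ref{pro43} on $u\circ F_2,\,u\circ F_3\in E(\lambda/5)$, but that lemma is stated only for eigenvalues strictly below $\lambda_1^D$, and nothing in the hypothesis $m_0=1,\ \lambda_1\notin\{2,5,6\}$ forces $\lambda/5<\lambda_1^D$ (indeed $\lambda$ can be arbitrarily large when $|\eps|\ge 1$). The converse direction of Lemma~\ref{pro43} that you need relies on the bijectivity of $\tau_\lambda^D,\tau_\lambda^N$, which fails once $\lambda$ hits the Dirichlet or Neumann spectrum. The paper handles this by first passing to a level $m$ with $5^{-m}\lambda<\lambda_1^D$, observing that $(\mathrm d(u\circ F_2F_3^{m-1}))_{p_3}=(3/5)^{m-1}(\mathrm d(u\circ F_2))_{p_3}=0$, applying Lemma~\ref{pro43} \emph{there} to obtain $(4-\lambda_m)u(p_{23})=2(u(F_2F_3^{m-1}p_2)+u(F_2F_3^{m-1}p_1))$, and then inducting this identity back down to $m=1$ using the extension rule. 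Your argument can be repaired in exactly this way, but as written the citation of Lemma~\ref{pro43} at level~$1$ is unjustified.
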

\begin{proof}
We only need to consider the case $ij=23$ by symmetry. 

First, we prove $u(p_2)=u(p_3)$. Suppose $u$ has an extreme set $A$ and $p_{23}\in A$. We claim that $\big(\mathrm{d}(u\circ F_2)\big)_{p_3}=\big(\mathrm{d}(u\circ F_3)\big)_{p_2}=0$. Indeed, if $\big(\mathrm{d}(u\circ F_2)\big)_{p_3}>0$, then by (\ref{defdu}),  for $m$ large enough, we have $u(p_{23})>u(F_2F_3^mp_2)$ or $u(p_{23})>u(F_2F_3^mp_1)$; but by Lemma \ref{matching}, $\big(\mathrm{d}(u\circ F_3)\big)_{p_2}=-\big(\mathrm{d}(u\circ F_2)\big)_{p_3}<0$ giving $u(p_{23})<u(F_3F_2^m(p_3))$ or $u(p_{23})<u(F_3F_2^m(p_1))$, which contradicts to $p_{23}\in A$.

Let $m\geq 1$ be an integer so that $5^{-m}\lambda<\lambda_1^D$. From the previous claim, it holds that $\big(\mathrm{d}(u\circ F_2F_3^{m-1})\big)_{p_3}=\big(\mathrm{d}(u\circ F_3F_2^{m-1})\big)_{p_2}=0$. Applying Lemma \ref{pro43} to $u\circ F_2F_3^{m-1}$ and $u\circ F_3F_2^{m-1}$, we have
\begin{align}\label{gamma-delta01pre}
(4-\lambda_{m})u(p_{23})&=2(u(F_2F_3^{m-1}p_2)+u(F_2F_3^{m-1}p_1))\\
&=2(u(F_3F_2^{m-1}p_3)+u(F_3F_2^{m-1}p_1)).\notag
\end{align}

If $m=1$, then
\begin{equation}\label{eqstar}
\begin{aligned}
2(u(F_2p_2)+u(F_2p_1))=(4-\lambda_{1})u(p_{23})=2(u(F_3p_3)+u(F_3p_1)),
\end{aligned}
\end{equation}
so $u(p_2)+u(p_{12})=u(p_3)+u(p_{31})$.
Applying (\ref{eqdeci}) to $u(p_{12})$ and $u(p_{31})$, a calculation gives
$u(p_2)=u(p_3)$.

If $m\geq 2$, then
\begin{equation*}
\begin{aligned}
&u(F_2F_3^{m-1}p_2)+u(F_2F_3^{m-1}p_1)\\
=&\ \frac{(6-\lambda_{m})(u(F_2F_3^{m-2}p_2)+u(F_2F_3^{m-2}p_1))+2(4-\lambda_{m})u(p_{23})}{(2-\lambda_{m})(5-\lambda_{m})},
\end{aligned}
\end{equation*}
which together with \eqref{gamma-delta01pre} yields
\begin{equation*}
\begin{aligned}
&2(u(F_2F_3^{m-2}p_2)+u(F_2F_3^{m-2}p_1))\\
=&\ \frac{(2-\lambda_{m})(5-\lambda_{m})-4}{6-\lambda_{m}}\cdot(4-\lambda_{m})u(p_{23})=(4-\lambda_{m-1})u(p_{23}),
\end{aligned}
\end{equation*}
performing \eqref{gamma-delta01pre} with $m$ replaced by $m-1$.
So by induction this gives \eqref{eqstar}, which also yields $u(p_2)=u(p_3)$.

Next, we prove $\frac{1}{2-\lambda_1}\big(2u(p_1)-(2-\Phi(\lambda_1))u(p_2)\big)\big(u(p_2)-u(p_1)\big)\geq 0$.
By \eqref{eqdeci} and $u(p_2)=u(p_3)$,
\begin{equation*}
u(p_{12})=u(p_{31})=\frac{(6-\lambda_1)u(p_2)+(4-\lambda_1)u(p_1)}{(2-\lambda_1)(5-\lambda_1)}=\frac12(4-\lambda_1)u(p_{23})-u(p_2).
\end{equation*}
Then for $m\geq 1$, by induction we get
\begin{equation}\label{gamma-delta02pre}
\begin{aligned}
\frac{1}{\lambda_m}\big(u(p_{23})&-u(F_2F_3^{m-1}p_2)\big)=\frac{1}{\lambda_m}\big(u(p_{23})-u(F_3F_2^{m-1}p_3)\big)\\
&=\frac{1}{\lambda_1}\big(u(p_{23})-u(p_2)\big)=\frac{1}{\Phi(\lambda_1)(2-\lambda_1)}\big(2u(p_1)-(2-\Phi(\lambda_1))u(p_2)\big),\\
\frac{1}{\lambda_m}\big(u(p_{23})&-u(F_2F_3^{m-1}p_1)\big)=\frac{1}{\lambda_m}\big(u(p_{23})-u(F_3F_2^{m-1}p_1)\big)\\
&=\frac{\lambda_1-2}{2\lambda_1} u(p_{23})+\frac{1}{\lambda_1}u(p_2)=\frac{1}{\Phi(\lambda_1)}\big(u(p_2)-u(p_1)\big).
\end{aligned}
\end{equation}
Since $p_{23}\in A$, the product of the above two equalities gives
\begin{equation*}
\frac{1}{2-\lambda_1}\big(2u(p_1)-(2-\Phi(\lambda_1))u(p_2)\big)\big(u(p_2)-u(p_1)\big)\geq 0.
\end{equation*}
\end{proof}

\begin{lemma}\label{lm4.8}
Let $u\in E(\lambda)\setminus\{0\}$ with level of birth $m_0=1$ and $\lambda_1\notin\{2,5,6\}$. Suppose that there exists an extreme set $A$ of $u$ and $ij\in S_1$ such that $p_{ij}\in A$. Then,

(a).  for $0<\lambda<\lambda_1^D$, 
 $\tau(u)\in\mathcal{L}_{\psi^{-1}(\lambda),ij}\cup\{\boldsymbol{\theta}\}$; 

(b). for $5^{-m}\lambda<\lambda_1^D$ with $m\geq 1$, 
\begin{equation*}
\tau(u\circ F_jF_i^{m-1})\in\mathcal{I}_{\lambda_m,i}\cup\{\boldsymbol{\zeta}_{\lambda_m,ki}\},\quad\tau(u\circ F_iF_j^{m-1})\in\mathcal{I}_{\lambda_m,j}\cup\{\boldsymbol{\zeta}_{\lambda_m,jk}\},
\end{equation*}
where $\mathcal{I}_{\lambda_m,i}$ denotes the open line segment joining $\boldsymbol{\zeta}_{\lambda_m,ki}$ to $\boldsymbol{\zeta}_{\lambda_m,ij}$ for distinct $i,j,k\in S$.
\end{lemma}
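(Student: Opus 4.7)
The plan is to reduce to the case $ij=23$ by the threefold symmetry of $\mathcal{SG}$ and then exploit three tools already available: the identities \eqref{gamma-delta02pre} derived inside the proof of Lemma~\ref{gamma-delta}, the matching identity of Lemma~\ref{matching}, and the Neumann-derivative characterization of Lemma~\ref{pro43}. Throughout, without loss of generality, I take $A\ni p_{23}$ to be a local maximum (the minimum case follows by replacing $u$ by $-u$).

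For part (a), Lemma~\ref{gamma-delta} gives $u(p_2)=u(p_3)$, which substituted into \eqref{defpi} immediately forces $\xi^{(1)}=0$ in $\tau(u)=(\xi^{(1)},\xi^{(2)})^{\mathrm t}$. The local maximum property, combined with \eqref{gamma-delta02pre} applied for $m$ large enough that $F_2F_3^{m-1}p_1,F_2F_3^{m-1}p_2\in A_\delta$, yields the two non-strict inequalities $u(p_2)\geq u(p_1)$ and $2u(p_1)\geq(2-\lambda_0)u(p_2)$ (the signs split out from the single product in Lemma~\ref{gamma-delta} once the extremum type is fixed). If either becomes an equality, Lemma~\ref{symfuncs} produces a constant arc: the first case, $u(p_1)=u(p_2)=u(p_3)$, yields $\tau(u)=\boldsymbol{\theta}$; the second would make $u$ constant on $\overline{p_2p_3}$ and hence place $p_2\in A$, which is excluded by $A\cap V_0=\emptyset$. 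The remaining strict inequalities imply $u(p_1)+2u(p_2)>0$, so $\xi^{(2)}=(u(p_1)-u(p_2))/(u(p_1)+2u(p_2))$ lies strictly in $(-\lambda_0/(6-\lambda_0),0)$, i.e.\ $\tau(u)\in\mathcal L_{\lambda_0,23}$.

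For part (b), set $v=u\circ F_3F_2^{m-1}\in E(5^{-m}\lambda)$. The claim inside the proof of Lemma~\ref{gamma-delta} gives $(\mathrm d(u\circ F_3))_{p_2}=0$, and a one-step computation using \eqref{defdu} (reindex $l=m-1+k$ and peel off the factor $(5/3)^{m-1}$) yields $(\mathrm dv)_{p_2}=(3/5)^{m-1}(\mathrm d(u\circ F_3))_{p_2}=0$. By Lemma~\ref{pro43} applied to $v$, whose first decimated eigenvalue is $\lambda_m$, this is equivalent to $(4-\lambda_m)v(p_2)=2(v(p_1)+v(p_3))$, which through \eqref{defpi} is precisely the equation $\sqrt{3}\xi^{(1)}+\xi^{(2)}=-\lambda_m/(6-\lambda_m)$ of the line supporting the edge $\mathcal I_{\lambda_m,2}$. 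To pin down which portion of this line $\tau(v)$ occupies, I use \eqref{gamma-delta02pre} to write $v(p_1),v(p_3)$ explicitly in terms of $u(p_1),u(p_2),u(p_{23})$: when $\tau(u)=\boldsymbol{\theta}$, substituting $u(p_{23})=2u(p_1)/(2-\lambda_1)$ (from the proof of Lemma~\ref{symfuncs}(a)) collapses $v(p_1)=v(p_2)$ and forces $\tau(v)=\boldsymbol\zeta_{\lambda_m,12}$; in the case $\tau(u)\in\mathcal L_{\lambda_0,23}$ the strict inequality $2u(p_1)>(2-\lambda_0)u(p_2)$ established in (a) yields $u(p_{23})\neq u(p_2)$, hence $v(p_3)\neq v(p_2)$, ruling out $\tau(v)=\boldsymbol\zeta_{\lambda_m,23}$. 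The symmetric statement for $\tau(u\circ F_2F_3^{m-1})$ follows from an identical argument after the reflection swapping the roles of $p_2$ and $p_3$.

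The step I expect to require the most care is the verification in part (b) that $\tau(v)$ actually lands in the half-open segment $\mathcal I_{\lambda_m,2}\cup\{\boldsymbol\zeta_{\lambda_m,12}\}$ rather than on the extension of the supporting line outside both endpoints. This amounts to showing that $(\mathrm dv)_{p_1}$ and $(\mathrm dv)_{p_3}$ share a common sign (so that $\tau(v)\in\overline{\mathcal C_{5^{-m}\lambda}}=\overline{\mathcal D_{\lambda_m}}$ by Proposition~\ref{lem44}), and this sign is dictated by whether $A$ is a local max or a local min; the required computation is essentially the same one that produced the inequalities in part (a), now rescaled through \eqref{gamma-delta02pre} by the factors $\lambda_m/\lambda_1$ and $\lambda_m/\lambda_0$.
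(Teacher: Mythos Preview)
Your argument for part (a) is correct and essentially matches the paper: both use Lemma~\ref{gamma-delta} to obtain $u(p_2)=u(p_3)$, split the product inequality into two signed inequalities (once the extremum type is fixed), and then exclude the endpoint $\boldsymbol\zeta_{\lambda_0,23}$ via Lemma~\ref{symfuncs}(b). Your coordinate-by-coordinate verification that $\xi^{(2)}\in(-\lambda_0/(6-\lambda_0),0)$ is just the hands-on version of the paper's computation through \eqref{pixxi}.

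Part (b), however, has a genuine gap. You organise the exclusion of the endpoint $\boldsymbol\zeta_{\lambda_m,23}$ around the dichotomy $\tau(u)\in\{\boldsymbol\theta\}\cup\mathcal L_{\lambda_0,23}$ produced by part (a), and you invoke ``the strict inequality $2u(p_1)>(2-\lambda_0)u(p_2)$ established in (a)''. But part (a) assumes $0<\lambda<\lambda_1^D$, whereas part (b) only assumes $5^{-m}\lambda<\lambda_1^D$ for some $m\geq 1$. In the applications of part (b) (notably Lemma~\ref{lmI}), $\lambda$ is typically large, $\lambda_1$ need not lie in $(0,2)$, and $\lambda_0=\Phi(\lambda_1)$ need not lie in $(0,6)$, so neither the object $\mathcal L_{\lambda_0,23}$ nor the sign-splitting of part (a) is available; your case analysis is therefore incomplete.

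The paper avoids this entirely by never referring back to $\tau(u)$. It works directly with $v=u\circ F_2F_3^{m-1}$, reads off from \eqref{gamma-delta01pre} and \eqref{gamma-delta02pre} the linear constraint $(4-\lambda_m)v(p_3)=2(v(p_1)+v(p_2))$ together with the product inequality $(v(p_3)-v(p_2))(v(p_3)-v(p_1))\geq 0$, and checks via \eqref{pixxi} that these two conditions cut out exactly the closed segment $\overline{\mathcal I_{\lambda_m,3}}$. The endpoint $\boldsymbol\zeta_{\lambda_m,23}$ is then excluded by pulling back through the maps $\mathcal P^3_{\lambda_k}$ (using \eqref{eqpalpha}) to get the boundary condition of Lemma~\ref{symfuncs}(b), hence $u$ constant on $\overline{p_2p_3}$, contradicting $p_{23}\in A$. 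Your own computation that $\tau(v)=\boldsymbol\zeta_{\lambda_m,23}$ forces $v(p_2)=v(p_3)$, hence $u(p_{23})=u(p_2)$, is exactly the seed of this argument; you just need to run it without the detour through (a). Likewise, your proposed route to the closed-segment containment via the signs of $(\mathrm dv)_{p_1},(\mathrm dv)_{p_3}$ is more circuitous than simply using the product inequality $(v(p_2)-v(p_1))(v(p_2)-v(p_3))\geq 0$, which is immediate from \eqref{gamma-delta02pre} and the extremality of $p_{23}$.
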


\begin{proof} Without loss of generality, assume that $ij=23$.

(a). Since $0<\lambda<\lambda_1^D$, $\lambda_1\in(0,2)$. Recall that $\lambda_0=\Phi(\lambda_1)=\psi^{-1}(\lambda)$. By Lemma \ref{gamma-delta}, $u$\ satisfies (\ref{gamma-delta00}).
Then, using (\ref{pixxi}) it is direct to check that
\begin{equation*}
\begin{aligned}
&\tau(u)\in\pi\circ\tau_\lambda^D\Big(\big\{u\in E(\lambda)\setminus\{0\}: u\text{ satisfies (\ref{gamma-delta00})}\big\}\Big)\\
=&\ \big\{(\xi^{(1)},\xi^{(2)})\in\mathbb{R}^2:\xi^{(1)}=0,\ -\frac{\lambda_0}{6-\lambda_0}\leq\xi^{(2)}\leq 0\big\}=\mathcal{L}_{\lambda_0,23}\cup\{\boldsymbol{\theta}\}\cup\{\boldsymbol{\zeta}_{\lambda_0,23}\},
\end{aligned}
\end{equation*}
since
\begin{equation*}
\begin{aligned}
&(0,1,-1)(\mathbf{1},Q)^{-\mathrm{t}}=\frac{1}{3}\big(0,-2\sqrt{3},0\big),\\
&(2,-(2-\lambda_0),0)(\mathbf{1},Q)^{-\mathrm{t}}=\frac{1}{3}\big(\lambda_0,\sqrt{3}(2-\lambda_0),6-\lambda_0\big),\\
&(-1,1,0)(\mathbf{1},Q)^{-\mathrm{t}}=\frac{1}{3}\big(0,-\sqrt{3},-3\big),
\end{aligned}
\end{equation*}
and $\pi\circ \tau_\lambda^D(u')=\frac1{3-\delta}(0,-\delta)^\mathrm{t}\in\mathcal{L}_{\lambda_0,23}$ for a function $u'\in E(\lambda)$ satisfying (\ref{gamma-delta00}) with $u'(p_1)=1-\delta,\ u'(p_2)=u'(p_3)=1$ and $\delta>0$ small enough.

Further, we see that $\tau(u)\neq\boldsymbol{\zeta}_{\lambda_0,23}$, since otherwise by Remark \ref{symfuncsre}, $u$ is constant on $\overline{p_2p_3}$, which contradicts to $p_{23}\in A$. Therefore, we get $\tau(u)\in\mathcal{L}_{\lambda_0,23}\cup\{\boldsymbol{\theta}\}=\mathcal{L}_{\psi^{-1}(\lambda),23}\cup\{\boldsymbol{\theta}\}$.

(b). For $5^{-m}\lambda<\lambda_1^D$ with $m\geq 1$, it is easy to see that 
$\lambda_{m}=\psi^{-1}(5^{-m}\lambda)\in(0,6)$ by Remark \ref{re33}. Considering the function $v=u\circ F_2F_3^{m-1}$, by (\ref{gamma-delta01pre}), (\ref{gamma-delta02pre}) and $p_{23}=F_2F_3^{m-1}p_3\in A$, we see 
\begin{equation}\label{gamma-delta0102r}
\begin{aligned}
&2v(p_1)+2v(p_2)-(4-\lambda_m)v(p_3)=0,\\
&\big(v(p_{3})-v(p_2)\big)\big(v(p_{3})-v(p_1)\big)\geq 0.
\end{aligned}
\end{equation}
Using (\ref{pixxi}) we find that
\begin{equation*}
\begin{aligned}
&\tau(v)\in\pi\circ\tau_\lambda^D\Big(\big\{v\in E(5^{-m}\lambda)\setminus\{0\}: v\text{ satisfies (\ref{gamma-delta0102r})}\big\}\Big)\\
=&\ \big\{(\xi^{(1)},\xi^{(2)})\in\mathbb{R}^2:\xi^{(2)}=\sqrt{3}\xi^{(1)}-\frac{\lambda_m}{6-\lambda_m},\ 0\leq\xi^{(1)}\leq\frac{\sqrt{3}\lambda_m}{2(6-\lambda_m)}\big\}\\
=&\ \mathcal{I}_{\lambda_m,3}\cup\{\boldsymbol{\zeta}_{\lambda_m,23}\}\cup\{\boldsymbol{\zeta}_{\lambda_m,31}\},
\end{aligned}
\end{equation*}
since
\begin{equation*}
\begin{aligned}
&(2,2,-(4-\lambda_m))(\mathbf{1},Q)^{-\mathrm{t}}=\frac{1}{3}\big(\lambda_m,-\sqrt{3}(6-\lambda_m),6-\lambda_m\big),\\
&(0,-1,1)(\mathbf{1},Q)^{-\mathrm{t}}=\frac{1}{3}\big(0,2\sqrt{3},0\big),\\
&(-1,0,1)(\mathbf{1},Q)^{-\mathrm{t}}=\frac{1}{3}\big(0,\sqrt{3},-3\big),
\end{aligned}
\end{equation*}
and $\pi\circ \tau_\lambda^D(v')=\frac{\lambda_m}{4(6-\lambda_m)}(\sqrt{3},-1)^\mathrm{t}\in\mathcal{I}_{\lambda_m,3}$ for a function $v'\in E(5^{-m}\lambda)$ satisfying (\ref{gamma-delta0102r}) with $v'(p_1)=v'(p_2)=4-\lambda_m$ and $v'(p_3)=4$.

Also, we can see $\tau(v)\neq\boldsymbol{\zeta}_{\lambda_m,23}$, since otherwise by \eqref{eqpalpha}, $\tau(u)=\boldsymbol{\zeta}_{\Phi(\lambda_1),23}$, and then by Remark \ref{symfuncsre}, $u$ is constant on $\overline{p_2p_3}$, which contradicts $p_{23}\in A$. Therefore we get $\tau(u\circ F_2F_3^{m-1})\in\mathcal{I}_{\lambda_m,3}\cup\{\boldsymbol{\zeta}_{\lambda_m,31}\}$.
Similarly, $\tau(u\circ F_3F_2^{m-1})\in\mathcal{I}_{\lambda_m,2}\cup\{\boldsymbol{\zeta}_{\lambda_m,12}\}$.
\end{proof}

\begin{lemma}\label{pro_class}
Let $u\in\mathscr D$ and $A$ be an extreme set of $u$. Then, $A$ is of one of the following two types:

(a). $A=\{p\}$ for $p\in\mathcal{SG}\setminus V_*$;

(b). $A\cap (V_*\setminus V_0)\neq\emptyset$.
\end{lemma}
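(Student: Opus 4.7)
\textbf{Proof plan for Lemma \ref{pro_class}.}

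The plan is to show that the two types are exhaustive by assuming $A$ avoids $V_*\setminus V_0$ and proving $A$ must then be a singleton lying in $\mathcal{SG}\setminus V_*$. Since by Definition \ref{def1} the extreme set $A$ is a connected component of $u^{-1}(c)$ with $A\cap V_0=\emptyset$, the hypothesis $A\cap(V_*\setminus V_0)=\emptyset$ combined with $A\cap V_0=\emptyset$ yields $A\subset\mathcal{SG}\setminus V_*$. It then suffices to prove $A$ is a singleton.

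Suppose for contradiction that $A$ contains two distinct points $p\neq q$. Because the diameters of the cells $F_w(\mathcal{SG})$, $w\in W_m$, tend to $0$ uniformly as $m\to\infty$, we may choose $m$ large enough and $w\in W_m$ with $p\in F_w(\mathcal{SG})$ but $q\notin F_w(\mathcal{SG})$. The idea is to separate $A$ along the finite set $F_w(V_0)\subset V_m\subset V_*$, which $A$ does not touch by assumption.

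Work in the subspace $Y:=\mathcal{SG}\setminus F_w(V_0)$, which contains $A$. The set $F_w(\mathcal{SG})$ is closed in $\mathcal{SG}$, so $\mathcal{SG}\setminus F_w(\mathcal{SG})$ is open in $\mathcal{SG}$ and hence open in $Y$. Its complement in $Y$ is $F_w(\mathcal{SG})\setminus F_w(V_0)$, which is therefore closed in $Y$; but since $Y$ is the disjoint union of these two pieces, each is actually clopen in $Y$. Thus $Y$ splits as a disjoint union of two relatively open sets, the first containing $p$ and the second containing $q$. This contradicts the connectedness of $A\subset Y$, so $A=\{p\}$ with $p\in\mathcal{SG}\setminus V_*$, giving case (a). If instead $A$ meets $V_*\setminus V_0$, we are in case (b).

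The only genuine point of care is the clopen decomposition of $Y$; once the relative topology is set up correctly (using that $F_w(\mathcal{SG})\cap F_{w'}(\mathcal{SG})\subset F_w(V_0)\cap F_{w'}(V_0)$ for $w'\neq w$ of the same length, so that removing $F_w(V_0)$ genuinely disconnects the inside of the $w$-cell from its complement in $\mathcal{SG}$), the rest is immediate. No use of the eigenfunction equation, the harmonic structure, or the measure $\mu$ is required; only the p.c.f.\ cell structure and the definition of extreme set are used.
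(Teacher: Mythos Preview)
Your argument is correct and is precisely the connectivity argument the paper has in mind; the paper's own proof is the single sentence ``This is obvious from the connectivity of $A$,'' and you have simply written out the details of that sentence. There is nothing to add.
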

\begin{proof}
This is obvious from the connectivity of $A$.  \end{proof}

The following is the main result in the second step. 

\begin{proposition}\label{lemnece}
Let $u\in E(\lambda)\setminus\{0\}$ with $0<\lambda<\lambda_1^D$, and suppose $u$ has an extreme set $A$. Then,

(a). if $A$ belongs to type (a) in Lemma \ref{pro_class}, there exists $w\in W_*$ and $ij\in S_1$ such that $\tau(u\circ F_w)\in\mathcal{M}_{5^{-|w|}\lambda,ij}\cup\partial\mathcal{M}_{5^{-|w|}\lambda,ij}$;

(b). if $A$ belongs to type (b) in Lemma \ref{pro_class}, there exists $w\in W_*$ and $ij\in S_1$ such that  $\tau(u\circ F_w)\in\mathcal{L}_{\psi^{-1}(5^{-|w|}\lambda),ij}\cup\{\boldsymbol{\theta}\}$.
\end{proposition}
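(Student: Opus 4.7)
The plan is, in both cases, to localize $u$ through some $F_w$ until Lemma~\ref{lm4.8}(a) can be applied. Case (b) is handled directly by descending to the first level at which $A$ enters $V_*\setminus V_0$; case (a) proceeds by contradiction using the nested cells around $p$ and the non-increasing behaviour of $\max_{V_0}(u\circ F_{w_m})$ under the running assumption.

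For case (b), let $m^*:=\min\{m\geq 1:A\cap(V_m\setminus V_0)\neq\emptyset\}$ and pick $p\in A\cap(V_{m^*}\setminus V_{m^*-1})$, which is nonempty since $A\cap V_0=\emptyset$ and by minimality of $m^*$. Write $p=F_w p_{ij}$ for the unique $w\in W_{m^*-1}$ and $ij\in S_1$, and set $u':=u\circ F_w\in E(5^{-|w|}\lambda)$. Let $\tilde A$ be the connected component of $A\cap F_w\mathcal{SG}$ containing $p$ and $A':=F_w^{-1}(\tilde A)\ni p_{ij}$. I would verify that $A'$ is an extreme set of $u'$: it is a connected component of $(u')^{-1}(c)$ with $c=u(p)$ because $F_w$ is a homeomorphism onto $F_w\mathcal{SG}$, so any component $B$ of $(u')^{-1}(c)$ through $p_{ij}$ satisfies $F_w(B)\subset A\cap F_w\mathcal{SG}$ hence $F_w(B)\subset\tilde A$, giving $B\subset A'$, and the reverse inclusion is automatic; it satisfies $A'\cap V_0=\emptyset$ because $F_w V_0\subset V_{m^*-1}$ and by minimality $A\cap V_{m^*-1}=\emptyset$; and the $\delta$-neighborhood extremum condition for $A$ transfers to a $\delta'$-neighborhood condition for $A'$ via the contraction factor of $F_w$. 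Lemma~\ref{lm4.8}(a) applied to $u'$ then yields $\tau(u')\in\mathcal{L}_{\psi^{-1}(5^{-|w|}\lambda),ij}\cup\{\boldsymbol{\theta}\}$.

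For case (a), let $\omega=\Pi^{-1}(p)$ (unique as $p\notin V_*$), $w_m=[\omega]_m$, and $v_m:=u\circ F_{w_m}\in E(5^{-m}\lambda)$. I argue by contradiction: suppose $\tau(v_m)\notin\mathcal{M}_{5^{-m}\lambda,ij}\cup\partial\mathcal{M}_{5^{-m}\lambda,ij}$ for every $m$ and $ij$; equivalently, $v_m(p_{ij})$ lies strictly between $\min_k v_m(p_k)$ and $\max_k v_m(p_k)$ at every level. Replacing $u$ by $-u$ if necessary, take $A$ to be a local maximum with value $c=u(p)>0$ and set $M_m:=\max_k v_m(p_k)$. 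Using $v_{m+1}(p_k)=v_m(F_{\omega_{m+1}}p_k)$, the two values of $v_{m+1}|_{V_0}$ at indices $k\neq\omega_{m+1}$ come from values of $v_m$ at midpoints $p_{\omega_{m+1}k}\in V_1\setminus V_0$ and are strictly less than $M_m$ by assumption, while the remaining value equals $v_m(p_{\omega_{m+1}})\leq M_m$; hence $M_m$ is non-increasing. On the other hand, for $m$ large $F_{w_m}\mathcal{SG}$ lies in a $\delta$-neighborhood of $A$ with $u\leq c$, giving $M_m\leq c$, and continuity of $u$ with $F_{w_m}V_0\to\{p\}$ yields $M_m\to c$. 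Combining these forces $M_m=c$ for all large $m$, so $v_m$ attains its global max $c$ at some $p_{k_m}\in V_0$ while also attaining $c$ at the interior point $F_{w_m}^{-1}(p)\in F_{\omega_{m+1}}\mathcal{SG}\setminus F_{\omega_{m+1}}V_0$. From here I would extract a contradiction by a connectedness analysis of $u^{-1}(c)\cap F_{w_m}\mathcal{SG}$, exploiting the singleton-component nature of $A=\{p\}$ together with $p\notin V_*\supset F_{w_m}V_0$ (so the boundary ``max points'' $F_{w_m}p_{k_m}$ must lie in components of $u^{-1}(c)$ disjoint from $A$, which is incompatible with their accumulation at $p$ given the connectivity structure on $\mathcal{SG}$).

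The main obstacle lies in the last step of case (a). The non-increasing monotonicity and the limit $M_m\to c$ are mechanical, but excluding the degenerate scenario $M_m=c$ for all large $m$ (equivalently, that $u$ takes the value $c$ at a sequence of vertices in $V_*$ accumulating at $p$) genuinely uses the singleton-component property of $A=\{p\}$ combined with a delicate connectedness argument at the scale of $F_{w_m}\mathcal{SG}$. Case (b) is more routine; the subtlety there is confirming that $A'$ is a connected component of $(u')^{-1}(c)$ in all of $\mathcal{SG}$ and not merely in $F_w\mathcal{SG}$, which is handled by the homeomorphism property of $F_w$ together with the minimality of $m^*$.
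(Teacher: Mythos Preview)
Your treatment of part (b) is correct and coincides with the paper's, with the extra care you take in verifying that $A'$ is an extreme set of $u\circ F_w$ being harmless (indeed, once you observe $A\cap V_{m^*-1}=\emptyset$, connectedness of $A$ forces $A\subset F_w(\mathcal{SG}\setminus V_0)$ since the connected components of $\mathcal{SG}\setminus V_{m^*-1}$ are exactly the sets $F_{w'}(\mathcal{SG}\setminus V_0)$, $w'\in W_{m^*-1}$).

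For part (a), your setup is right and matches the paper's case split, but the proposed finishing step via connectedness does not work. The accumulation of points $F_{w_m}p_{k_m}\in u^{-1}(c)\setminus\{p\}$ at $p$ is \emph{not} incompatible with $\{p\}$ being a connected component of $u^{-1}(c)$: singleton components of closed sets need not be isolated (think of $x^2\sin(1/x)$ at the origin), and local connectedness of $\mathcal{SG}$ does not rescue this.

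The correct closure is much simpler and already lives inside your own framework. Under your contradiction hypothesis every midpoint value of $v_m$ is strictly below $M_m$, so from $M_{m+1}=c$ you get that the maximum of $v_{m+1}$ over $V_0$ is attained precisely at the shared vertex: $v_m(p_{\omega_{m+1}})=c$ for all large $m$. Now compute this one level up: $v_m(p_{\omega_{m+1}})=v_{m-1}(F_{\omega_m}p_{\omega_{m+1}})$. If $\omega_m\neq\omega_{m+1}$ this is the midpoint value $v_{m-1}(p_{\omega_m\omega_{m+1}})$, which by hypothesis is strictly less than $M_{m-1}=c$ --- a contradiction. Hence $\omega_m=\omega_{m+1}$ for all large $m$, i.e.\ $\omega$ is eventually constant, forcing $p=\Pi(\omega)\in V_*$ and contradicting the type-(a) assumption. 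This is exactly the mechanism the paper uses (phrased there directly rather than by contradiction): once the maxima stabilise, the fact that $p\notin V_*$ produces two consecutive distinct letters $\omega_{m+1}=i\neq j=\omega_{m+2}$, and the value $c$ is then forced onto a midpoint at level $m$ or $m+1$, landing $\tau(u\circ F_{[\omega]_m})$ or $\tau(u\circ F_{[\omega]_{m+1}})$ in $\partial\mathcal{M}$.
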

\begin{proof}
(a). Without loss of generality, suppose $u$ attains a local maximum in $A$. Write $A=\{p\}=\bigcap_{m\geq 1}F_{[\omega]_m}\mathcal{SG}$, where $\omega\in\Sigma$ and $[\omega]_m\in W_m$ is the $m$-th truncation of $\omega$. Since $u$ is continuous, we have $\max_{l\in S}u(F_{[\omega]_m}p_l)\to u(p)$ as $m\to\infty$, which gives $\max_{l\in S}u(F_{[\omega]_m}p_l)\leq u(p)$ for large $m$.

Then, at least one of the following two cases happens:

(1). $\max_{l\in S}u(F_{[\omega]_{m+1}}p_l)>\max_{l\in S}u(F_{[\omega]_{m}}p_l)$ for some large $m$, giving $\tau(u\circ F_{[\omega]_{m}})\in \mathcal{M}_{5^{-m}\lambda,ij}$ for some $ij\in S_1$;

(2). $\max_{l\in S}u\big(F_{[\omega]_{m}}(p_l)\big)=u(p)$ for all large $m$.

When case (2) happens, noticing that $p\notin V_*$, there exist distinct $i,j\in S$ and large $m$ so that   $[\omega]_{m+2}=[\omega]_mij$. It follows that $F_{wij}V_0\cap F_wV_0=\emptyset$ and $\max_{l\in S}u(F_{wij}p_l)=\max_{l\in S}u(F_{wi}p_l)=\max_{l\in S}u(F_{w}p_l)=u(p)$ with $w=[\omega]_m$. Then, one of the following cases happens:

(2-1). $\max_{l\in S}u(F_{w}p_l)=u(F_wp_{ij})$, which gives  $\tau(u\circ F_w)\in\partial\mathcal{M}_{5^{-m}\lambda,ij}$;

(2-2). $\max_{l\in S}u(F_{w}p_l)=\max_{l\in S}u(F_{wi}p_l)=u(F_{wi}p_{ij})$ or $u(F_{wi}p_{jk})$ with $k\in S\setminus\{i,j\}$, which gives $\tau(u\circ F_{wi})\in\partial\mathcal{M}_{5^{-m-1}\lambda,ij}\cup\partial\mathcal{M}_{5^{-m-1}\lambda,jk}$.

(b). Choose $m\geq0$ to be the smallest integer so that $A\cap (V_{m+1}\setminus V_m)\neq\emptyset$ and $p\in A\cap (V_{m+1}\setminus V_{m})$.  Then $A\subset F_w(\mathcal{SG}\setminus V_0)$ with some $w\in W_m$. Write $p=F_wp_{ij}$ for some $ij\in S_1$. Applying Lemma \ref{lm4.8}-(a) to $u\circ F_w$, we get $\tau(u\circ F_w)\in\mathcal{L}_{\psi^{-1}(5^{-m}\lambda),ij}\cup\{\boldsymbol{\theta}\}$.
\end{proof}

Now we come to prove Theorem \ref{thm4}.

\begin{proof}[Proof of Theorem \ref{thm4}]
First, we prove $\mathcal{A}_\lambda=\mathcal{C}_\lambda=\mathcal{D}_{\psi^{-1}(\lambda)}$. By Proposition \ref{lem44}, we already have $\mathcal{C}_\lambda=\mathcal{D}_{\psi^{-1}(\lambda)}$ for $0<\lambda<\lambda_1^D$.

Let $u\in E(\lambda)\setminus\{0\}$. If $\tau(u)\in\mathcal{C}_\lambda$, then $(\mathrm{d}u)_{p_1},(\mathrm{d}u)_{p_2}$ and $(\mathrm{d}u)_{p_3}$ are either all positive or all negative. Suppose all positive and $u(p_1)=\min_{i\in S}\{u(p_i)\}$. From $(\mathrm{d}u)_{p_1}=\lim_{m\to\infty}(5/3)^m\big(2u(p_1)-u(F_1^mp_2)-u(F_1^mp_3)\big)>0$ we have $u(F_1^mp_i)<u(p_1)$ for some $i\in\{2,3\}$ and large $m$, which gives $\#\mathrm{Extr}(u)\geq 1$. So $\mathcal{C}_\lambda\subset\mathcal{A}_\lambda$.

If $\tau(u)\notin\mathcal{C}_\lambda\cup\big(\bigcup_{ij\in S_1}\{\boldsymbol{\zeta}_{\lambda_0,ij}\}\big)$, noticing that $\lambda_m=\Phi(\lambda_{m+1})$ and $\lambda_m\in(0,2)$ for each $m\geq 1$, by Lemma \ref{pro41r}-(a), Proposition \ref{lem44}, and induction we have $\tau(u\circ F_w)\notin\mathcal{C}_{5^{-m}\lambda}\cup\big(\bigcup_{ij\in S_1}\{\boldsymbol{\zeta}_{\lambda_m,ij}\}\big)$ for any $m\geq 0$ and $w\in W_m$. Still from Proposition \ref{lem44}, $\mathcal{L}_{\psi^{-1}(5^{-m}\lambda),ij}\subset\mathcal{M}_{5^{-m}\lambda,ij}\subset\mathcal{C}_{5^{-m}\lambda}$ and $\big(\partial\mathcal{M}_{5^{-m}\lambda,ij}\big)\setminus\{\boldsymbol{\zeta}_{\psi^{-1}(5^{-m}\lambda),ij}\}\subset\mathcal{C}_{5^{-m}\lambda}$, we have $\tau(u\circ F_w)\notin\mathcal{M}_{5^{-m}\lambda,ij}\cup\partial\mathcal{M}_{5^{-m}\lambda,ij}$ for any $m\geq 0$, $w\in W_m$ and $ij\in S_1$, which gives $\#\mathrm{Extr}(u)=0$ by Proposition \ref{lemnece}.

If $\tau(u)=\boldsymbol{\zeta}_{\lambda_0,23}$, repeatedly using \eqref{eqpalpha}, we get $\tau(u\circ F_w)=\boldsymbol{\zeta}_{\lambda_m,23}$ for $w\in \{2,3\}^m$; and $\tau(u\circ F_w)\notin\mathcal{C}_{5^{-m}\lambda}\cup \big(\bigcup_{ij\in S_1}\{\boldsymbol{\zeta}_{\lambda_m,ij}\}\big)$ for $w\in W_m\setminus\{2,3\}^m$. So $A\not\subset F_w(\mathcal{SG}\setminus V_0)$ for $w\in W_m\setminus\{2,3\}^m$, and by Lemma \ref{lm4.8}-(a), $A\cap \bigcup_{w\in W_*}\{F_wp_{12},F_wp_{31}\}=\emptyset$. So by Proposition \ref{lemnece}, if there exists an extreme set $A$ of $u$, then $A\cap\overline{p_2p_3}\neq\emptyset$. However, by Remark \ref{symfuncsre}, $u$ is constant on $\overline{p_2p_3}$ in this case. So $\#\mathrm{Extr}(u)=0$. By symmetry, we also have $\#\mathrm{Extr}(u)=0$ when $\tau(u)=\boldsymbol{\zeta}_{\lambda_0,31}$ or $\boldsymbol{\zeta}_{\lambda_0,12}$.

Hence $\#\mathrm{Extr}(u)=0$ for $\tau(u)\notin\mathcal{C}_\lambda$, that is, $\mathcal{A}_\lambda\subset\mathcal{C}_\lambda$. Therefore $\mathcal{A}_\lambda=\mathcal{C}_\lambda=\mathcal{D}_{\psi^{-1}(\lambda)}$.

Then by Proposition \ref{lem44}, we have a disjoint decomposition
\begin{equation}\label{pthm4-01}
\mathcal{A}_{\lambda}=\big(\bigcup_{i\in S}(\mathcal{T}_{\lambda}^i)^{-1}(\mathcal{A}_{5^{-1}\lambda})\big)\cup\big(\bigcup_{ij\in S_1}\mathcal{L}_{\psi^{-1}(\lambda),ij}\big)\cup\{\boldsymbol{\theta}\}
\end{equation}
with $(\mathcal{T}_{\lambda}^i)^{-1}(\mathcal{A}_{5^{-1}\lambda})=\mathcal{G}_{\lambda_0,i}=\mathcal{G}_{\psi^{-1}(\lambda),i}$ for $i\in S$.

Next, we prove
\begin{equation}\label{pthm4-02}
\mathcal{B}_\lambda=\big(\bigcup_{ij\in S_1}\mathcal{L}_{\psi^{-1}(\lambda),ij}\big)\cup\{\boldsymbol{\theta}\}.
\end{equation} 
Obviously, $\mathcal{B}_\lambda\subset\big(\bigcup_{ij\in S_1}\mathcal{L}_{\psi^{-1}(\lambda),ij}\big)\cup\{\boldsymbol{\theta}\}$. It suffice to prove the reverse inclusion.

For $\tau(u)\in\big(\bigcup_{ij\in S_1}\mathcal{L}_{\psi^{-1}(\lambda),ij}\big)\cup\{\boldsymbol{\theta}\}$, (\ref{pthm4-01}) yields that there exists at least one extreme set $A$ of $u$, but $u$ has no extreme set
in $F_i(\mathcal{SG}\setminus V_0)$ for each $i\in S$. So for an extreme set $A$ of $u$, it holds that $A\cap(V_1\setminus V_0)\neq\emptyset$.

If $\tau(u)\in\mathcal{L}_{\psi^{-1}(\lambda),23}$,  for any extreme set $A$ of $u$, Lemma \ref{lm4.8}-(a) gives that $p_{31}\notin A,\ p_{12}\notin A$, so we have $p_{23}\in A$ and $\#\mathrm{Extr}(u)=1$. 
Since $\tau(u)=(\xi^{(1)},\xi^{(2)})^{\mathrm t}\in \mathcal{L}_{\lambda_0,23}$, we have $\xi^{(1)}=\frac{\sqrt{3}(u(p_3)-u(p_2))}{2(u(p_1)+u(p_2)+u(p_3))}=0$ and $-\frac{\lambda_0}{6-\lambda_0}<\xi^{(2)}=\frac{2u(p_1)-u(p_2)-u(p_3)}{2(u(p_1)+u(p_2)+u(p_3))}<0$, which gives $u(p_1)\neq u(p_2)$ and $2u(p_1)\neq (2-\lambda_0)u(p_2)$. Therefore, by (\ref{gamma-delta02pre}), $A=\{p_{23}\}$. By symmetry, $\tau(u)\in\mathcal{L}_{\psi^{-1}(\lambda),31}$ gives $A=\{p_{31}\}$, and $\tau(u)\in\mathcal{L}_{\psi^{-1}(\lambda),12}$ gives $A=\{p_{12}\}$. If $\tau(u)=\boldsymbol{\theta}$,  by Remark \ref{symfuncsre}, $A=\overline{p_{23}p_{31}p_{12}}$. Altogether, for $\tau(u)\in\big(\bigcup_{ij\in S_1}\mathcal{L}_{\psi^{-1}(\lambda),ij}\big)\cup\{\boldsymbol{\theta}\}$, we have $\#\mathrm{Extr}(u)=1$ and $A\cap (V_1\setminus V_0)\neq\emptyset$ for the only extreme set $A$ of $u$. Therefore $\mathcal{B}_\lambda\supset\big(\bigcup_{ij\in S_1}\mathcal{L}_{\psi^{-1}(\lambda),ij}\big)\cup\{\boldsymbol{\theta}\}$ holds.

Finally, condition (A) follows from (\ref{eq2}), (\ref{pthm4-01}) and (\ref{pthm4-02}).
\end{proof}

\section{Proof of Theorem \ref{thm5}}\label{sec5}

Before commencing the proof, we introduce some additional notation.

Adopting the same notation as in Lemma \ref{lm4.8}, denote by $\mathcal{I}_{\alpha,i}$ the open line segment joining $\boldsymbol{\zeta}_{\alpha,ki}$ to $\boldsymbol{\zeta}_{\alpha,ij}$ for distinct $i,j,k\in S$, so we have a disjoint decomposition
\begin{equation*}
\partial\mathcal{D}_\alpha=\big(\bigcup_{i\in S}\mathcal{I}_{\alpha,i}\big)\cup\big(\bigcup_{ij\in S_1}\{\boldsymbol{\zeta}_{\alpha,ij}\}\big).
\end{equation*}



Let $\lambda_0\in(0,6)$ and $\mathbf{a}\in \mathbb R^3\setminus\{\mathbf{0}\}$. For $u^{\eps}$ with $|\eps|=n\geq 0$ (associated with $\lambda_0$ and $\mathbf{a}$)  defined before the statement of Theorem \ref{thm5}, we have $\lambda^{\eps}_{n}=\varphi_{\eps}(\lambda_0)\in(0,6)$ and $\lambda^\eps=5^n\psi(\lambda^{\eps}_{n})$. Note that $5^{-n}\lambda^{\eps}=\psi(\lambda^{\eps}_{n})<\psi(6)=\lambda_1^D$.

For $n\geq 1$ and $\eps\in\{-1,1\}^n$ with $\eps_n=1$, we consider the behavior of $u^\eps$ at points in $V_{n}\setminus V_0$.

For $p\in V_{n}\setminus V_0$, let $0\leq n'<n$ be the unique integer such that $p\in V_{n'+1}\setminus V_{n'}$, and take $w\in W_{n'}$ and $ij\in S_1$ such that $p=F_{w}p_{ij}$. Then we have the following lemma.

\begin{lemma}\label{lmI}
Let $p=F_wp_{ij}\in V_{n}\setminus V_0$ with $w\in W_{n'}$, $0\leq n'<n$. Then there exists an extreme set $A$ of $u^\eps$ such that $p\in A$ if and only if one of the following cases happens.

(a). $\tau(u^\eps\circ F_w)=\boldsymbol{\theta}$;

(b). $\tau(u^\eps\circ F_{wji^{n-n'-1}})\in\mathcal{I}_{\lambda_{n}^{\eps},i}$.

Moreover, when case (a) happens, $A=\overline{p_{23}^wp_{31}^wp_{12}^w}$ and when case (b) happens, $A=\{p\}$.
\end{lemma}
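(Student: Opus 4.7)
The strategy is to transfer the question from $u^\eps$ to two rescaled eigenfunctions that fall into the low-energy regime covered by Theorem \ref{thm4}. The key observation is that $p$ is the vertex $p_i$ of the level-$n$ cell $F_{wji^{n-n'-1}}\mathcal{SG}$ and, symmetrically, the vertex $p_j$ of $F_{wij^{n-n'-1}}\mathcal{SG}$, so $f := u^\eps\circ F_{wji^{n-n'-1}}$ is an eigenfunction with eigenvalue $5^{-n}\lambda^\eps=\psi(\lambda^\eps_n)<\lambda_1^D$. Meanwhile $v := u^\eps\circ F_w$ is an eigenfunction at intermediate depth whose level-$1$ discrete eigenvalue is $\lambda^\eps_{n'+1}$, so Lemma \ref{gamma-delta} and Lemma \ref{lm4.8}(b) apply to $v$ at the vertex $p_{ij}$, with the low-energy depth appearing in Lemma \ref{lm4.8}(b) being exactly $m=n-n'$.

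For the ``only if'' direction, assume $p\in A$. The local extremum of $u^\eps$ at $p$ restricts to a local extremum of $v$ at $p_{ij}$ inside the cell $F_w\mathcal{SG}$. Applying Lemma \ref{lm4.8}(b) to $v$ with $m=n-n'$ gives
\[\tau(u^\eps\circ F_{wji^{n-n'-1}})\in\mathcal{I}_{\lambda^\eps_n,i}\cup\{\boldsymbol{\zeta}_{\lambda^\eps_n,ki}\}.\]
The open-segment alternative is precisely case (b). In the vertex alternative, the remark following Lemma \ref{symfuncs} forces $u^\eps$ to be constant on the edge $F_{wji^{n-n'-1}}(\overline{p_kp_i})$; combined with the analogous conclusion from the adjacent cell $F_{wij^{n-n'-1}}\mathcal{SG}$ and propagation of the resulting constancy back up through the extension rule \eqref{eqdeci}, one is forced into $v(p_1)=v(p_2)=v(p_3)$, i.e.\ $\tau(v)=\boldsymbol{\theta}$, which is case (a).

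For the ``if'' direction, case (a) is a direct application of Lemma \ref{symfuncs}(a): $v$ is constant on $\overline{p_{23}p_{31}p_{12}}$, whence $u^\eps$ is constant on $A=\overline{p_{23}^wp_{31}^wp_{12}^w}$; the strict extremum on a neighborhood follows from observing that the constant value on this inner triangle and the boundary value $v(p_1)=v(p_2)=v(p_3)$ differ by the factor $1-\lambda^\eps_{n'+1}/2$ (from the extension rule), giving the required sign separation on a neighborhood. For case (b), $\tau(f)\in\mathcal{I}_{\lambda^\eps_n,i}$ lies on the side of $\partial\mathcal{D}_{\lambda^\eps_n}$ corresponding to $(\mathrm df)_{p_i}=0$ and off the open region $\mathcal{A}_{\psi(\lambda^\eps_n)}=\mathcal{D}_{\lambda^\eps_n}$. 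Hence by Theorem \ref{thm4}, $f$ has no interior extreme set, while the vanishing normal derivatives from both adjacent level-$n$ cells make $p$ a strict singleton extreme set of $u^\eps$.

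The main obstacle will lie in the ``only if'' direction: rigorously transferring the extremum property from $u^\eps$ at $p$ to $v$ at $p_{ij}$ when the extreme set $A$ may cross $F_wV_0$ and extend outside $F_w\mathcal{SG}$, and reducing the degenerate vertex case $\tau(f)=\boldsymbol{\zeta}_{\lambda^\eps_n,ki}$ back through all intermediate levels to the central-symmetric configuration $\tau(v)=\boldsymbol{\theta}$.
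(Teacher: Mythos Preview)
Your strategy matches the paper's, and the obstacle you flag at the end is not genuine: the proof of Lemma~\ref{lm4.8}(b), via Lemma~\ref{gamma-delta}, uses only the local behavior near $p_{ij}$, so it applies to $v$ once $p_{ij}$ is a local extremum of $v$, regardless of whether $A$ extends past $F_wV_0$. The actual gap lies in your sufficiency argument for case~(b). From $\tau(f)\in\mathcal{I}_{\lambda^\eps_n,i}$ you correctly get $(\mathrm df)_{p_i}=0$, and the matching condition then forces $(\mathrm dg)_{p_j}=0$ for $g:=u^\eps\circ F_{wij^{n-n'-1}}$; but zero derivatives at $p$ from both sides do not by themselves produce an extremum. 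You also need the remaining Neumann derivatives of $f$ and of $g$ to carry a \emph{consistent} sign, so that $u^\eps$ lies on one side of $u^\eps(p)$ in both adjacent cells. The paper secures this by first pulling $\tau(f)$ back through the maps $\mathcal P^i_\alpha$ (using \eqref{eqpalpha}) to show that $\tau(v)$ lies on the projective line through $\boldsymbol\theta$ and $\boldsymbol\zeta_{\lambda^\eps_{n'},ij}$, equivalently $v(p_i)=v(p_j)$; this makes $v$ symmetric under the reflection exchanging $p_i$ and $p_j$, whence $\tau(g)\in\mathcal I_{\lambda^\eps_n,j}$ and the signs agree. Without this symmetry step your argument does not close.

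A smaller gap sits in sufficiency~(a): knowing that the inner-triangle value and the boundary value of $v$ differ by the factor $1-\lambda^\eps_{n'+1}/2$ does not yet show the inner triangle is a local extreme set, since the eigenvalue of $v$ is $5^{-n'}\lambda^\eps$ and can be large, so Theorem~\ref{thm4} is unavailable and $v$ need not be monotone in between. The paper instead computes $v(F_kF_{w'}p_k)=(1-\tfrac{1}{2}\lambda^\eps_{n'+m+1})\,v(p_{ij})$ for every $w'\in\{i,j\}^m$ and $m\ge 0$, pinning the extreme set to $\overline{p_{23}^wp_{31}^wp_{12}^w}$ at every scale.
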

\begin{proof}
By symmetry, we only need to prove the case $i=2, j=3$.

(Necessity). Since $p=F_wp_{23}\in A$, applying Lemma \ref{lm4.8}-(b) to $u^\eps\circ F_{w}$ we obtain $\tau(u^\eps\circ F_{w32^{n-n'-1}})\in\mathcal{I}_{\lambda_{n}^{\eps},2}\cup\{\boldsymbol{\zeta}_{\lambda_{n}^{\eps},12}\}$. When $\tau(u^\eps\circ F_{w32^{n-n'-1}})=\boldsymbol{\zeta}_{\lambda_{n}^{\eps},12}$, by (\ref{eqpalpha}) and induction we see $\tau(u^\eps\circ F_{w3})=\boldsymbol{\zeta}_{\lambda_{n'+1}^{\eps},12}$ and $\tau(u^\eps\circ F_{w})=\boldsymbol{\theta}$.

(Sufficiency)-(a). If $\tau(u^\eps\circ F_{w})=\boldsymbol{\theta}$, by Remark \ref{symfuncsre}, $u^\eps$ is constant on $\overline{p_{23}^wp_{31}^wp_{12}^w}$. Moreover, from (\ref{eqdeci}) we have $u^\eps\circ F_w(F_1F_{w'}p_1)=(1-\frac{\lambda_{n'+m+1}}{2})u^\eps\circ F_w(p_{23})$ for each $w'\in\{2,3\}^m$ and $m\geq 0$. By symmetry, this forces the existance of an extreme set of $u^\eps$ inside $\bigcup_{ij\in S_1}\bigcup_{w'\in\{i,j\}^m,k\in S\setminus\{i,j\}}F_{wkw'}\mathcal{SG}$ for any $m\geq 0$, which implies that $\overline{p_{23}^wp_{31}^wp_{12}^w}$ is an extreme set.

(Sufficiency)-(b). If $\tau(u^\eps\circ F_{w32^{n-n'-1}})\in\mathcal{I}_{\lambda_{n}^{\eps},2}$. By (\ref{eqpalpha}) and induction we see $\tau(u^\eps\circ F_{w3})$ is on the projective line passing through $\boldsymbol{\zeta}_{\lambda_{n'+1}^{\eps},12}$ and $\boldsymbol{\zeta}_{\lambda_{n'+1}^{\eps},23}$, then $\tau(u^\eps\circ F_{w})$ is on the projective line passing through $\boldsymbol{\theta}$ and $\boldsymbol{\zeta}_{\lambda_{n'}^{\eps},23}$. This gives that the function $u^\eps\circ F_{w}$ is symmetric with respect to the reflection of $\mathcal{SG}$ exchanging $p_2$ and $p_3$, so $\tau(u^\eps\circ F_{w23^{n-n'-1}})\in\mathcal{I}_{\lambda_{n}^{\eps},3}$. Recalling that $\mathcal{I}_{\lambda_{n}^{\eps},2}\in\partial\mathcal{D}_{\lambda_n^{\eps}}=\partial\mathcal{C}_{\psi^{-1}(\lambda_n^{\eps})}$ (the equality follows from Proposition \ref{lem44}), we have $(\mathrm{d}(u^\eps\circ F_{w32^{n-n'-1}}))_{p_2}=0$, and $(\mathrm{d}(u^\eps\circ F_{w32^{n-n'-1}}))_{p_1}$, $(\mathrm{d}(u^\eps\circ F_{w32^{n-n'-1}}))_{p_3}$ are either both
positive or both negative. By symmetry now $(\mathrm{d}(u^\eps\circ F_{w32^{n-n'-1}}))_{p_1}$, $(\mathrm{d}(u^\eps\circ F_{w32^{n-n'-1}}))_{p_3}$, $(\mathrm{d}(u^\eps\circ F_{w23^{n-n'-1}}))_{p_1}$ and $(\mathrm{d}(u^\eps\circ F_{w23^{n-n'-1}}))_{p_2}$ are either all positive or all negative, forcing an extreme set of $u^\eps$ inside $F_{w32^{n-n'-1}}\mathcal{SG}\cup F_{w23^{n-n'-1}}\mathcal{SG}$. But by Theorem \ref{thm4}, there is no extreme set inside each of $F_{w32^{n-n'-1}}\mathcal{SG}$, $F_{w23^{n-n'-1}}\mathcal{SG}$, so there is an extreme set $A$ such that $p=F_wp_{23}\in A$. Furthermore, by a similar way as we did in the the second to last paragraph of the proof of Theorem \ref{thm4}, we see $\{p\}=A$.
\end{proof}

For $n\geq 1$ and $\eps\in\{-1,1\}^n$ with $\eps_n=1$, noticing that $\lambda_0\in(0,6)$, we have 
$\lambda_{n}^{\eps}=\varphi_{1}(\lambda_{n-1}^{\eps})\in(3,5)$.

\begin{lemma}\label{lmpuzzle}
If $n=|\eps|\geq 1$, then $\#\mathrm{Extr}(u^{\eps}\circ F_w)\geq 1$ for $w\in W_{n-1}$.
\end{lemma}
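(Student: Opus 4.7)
The plan is to combine Theorem~\ref{thm4} applied to each sub-cell with Lemma~\ref{lmI} governing extreme sets that touch $V_1\setminus V_0$. Setting $v:=u^{\eps}\circ F_w$, Proposition~\ref{pro01} gives $v\in E(5\psi(\lambda_n^{\eps}))$ and each $v\circ F_i=u^{\eps}\circ F_{wi}$ lies in $E(\psi(\lambda_n^{\eps}))$. Since $\lambda_n^{\eps}\in(3,5)\subset(0,6)$, we have $\psi(\lambda_n^{\eps})<\psi(6)=\lambda_1^D$, so Theorem~\ref{thm4} applies to each $v\circ F_i$: it admits an extreme set iff $\tau(v\circ F_i)\in\mathcal{D}_{\lambda_n^{\eps}}$, and such an extreme set, lying strictly inside $F_i(\mathcal{SG}\setminus V_0)$, is automatically an extreme set of $v$. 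On the other hand, viewing $v$ itself as a $u^{\tilde{\eps}}$-type function with $\tilde n=1$, $\tilde{\eps}=(1)$, $\tilde{\lambda}_0=\Phi(\lambda_n^{\eps})=\lambda_{n-1}^{\eps}$, the $n'=0$ case of Lemma~\ref{lmI} asserts that $p_{ij}\in V_1\setminus V_0$ belongs to an extreme set of $v$ iff either $\tau(v)=\boldsymbol{\theta}$ (the extreme set being $\overline{p_{23}p_{31}p_{12}}$) or $\tau(v\circ F_j)\in\mathcal{I}_{\lambda_n^{\eps},i}$ (the extreme set being $\{p_{ij}\}$).

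Hence $N(v)\ge 1$ will follow once I verify that for every such $v$ at least one of the three conditions holds: (I) $\tau(v)=\boldsymbol{\theta}$; (II) $\tau(v\circ F_i)\in\mathcal{D}_{\lambda_n^{\eps}}$ for some $i\in S$; or (III) $\tau(v\circ F_j)\in\mathcal{I}_{\lambda_n^{\eps},i}$ for some $ij\in S_1$. Passing to the projective plane via the transfer identity $\tau(v\circ F_i)=\mathcal{P}^i_{\lambda_n^{\eps}}(\tau(v))$, conditions (II) and (III) become statements that $\tau(v)$ lies in $\bigcup_i(\mathcal{P}^i_{\lambda_n^{\eps}})^{-1}(\mathcal{D}_{\lambda_n^{\eps}})$ or $\bigcup_{ij\in S_1}(\mathcal{P}^j_{\lambda_n^{\eps}})^{-1}(\mathcal{I}_{\lambda_n^{\eps},i})$, respectively, so the task reduces to showing that these two preimage families together with the single point $\{\boldsymbol{\theta}\}$ exhaust $\mathbb{R}\mathrm{P}^2$. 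I obtain this from the explicit description of $(\mathcal{P}^1_\alpha)^{-1}(\mathcal{D}_\alpha)$ for $\alpha\in(3,5)$ in Lemma~\ref{pro41r}(b) — as the disjoint union of an upper affine triangle, a lower affine cone, and an arc of the line at infinity — propagated through the $120^\circ$ projective rotation $\mathcal{J}$ from Lemma~\ref{pro41} to cover the indices $i=2,3$, combined with the vertex correspondences \eqref{eqpalpha} to identify how each $\mathcal{P}^j_{\lambda_n^{\eps}}$ maps the residual bad region onto the open boundary segments $\mathcal{I}_{\lambda_n^{\eps},i}$.

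The principal obstacle will be this final projective-covering step. Unlike the $\alpha\in(0,2)$ regime in Section~\ref{sec4}, where $(\mathcal{P}^1_\alpha)^{-1}(\mathcal{D}_\alpha)$ is the single affine triangle $\mathcal{G}_{\Phi(\alpha),1}$, for $\alpha=\lambda_n^{\eps}\in(3,5)$ the three-piece structure of the preimage — in particular its nonempty intersection with the line at infinity — makes the complement of the three-fold union, and its further splitting under the line-segment preimages $(\mathcal{P}^j_{\lambda_n^{\eps}})^{-1}(\mathcal{I}_{\lambda_n^{\eps},i})$, into a subtler projective configuration that has to be tracked explicitly. The $S_3$-symmetry implemented by $\mathcal{J}$ reduces the bulk of the verification to a single model case, but the bookkeeping near the three vertices $\boldsymbol{\zeta}_{\lambda_{n-1}^{\eps},ij}$ and along the line at infinity is the delicate technical heart of the argument.
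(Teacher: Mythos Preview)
Your proposal is correct and follows essentially the same route as the paper: reduce to the projective covering identity
\[
\mathbb{R}\mathrm{P}^2\setminus\{\boldsymbol{\theta}\}=\Big(\bigcup_{i\in S}(\mathcal{P}_\alpha^i)^{-1}(\mathcal{D}_\alpha)\Big)\cup\Big(\bigcup_{ij\in S_1}\big((\mathcal{P}_\alpha^i)^{-1}(\mathcal{I}_{\alpha,j})\cup (\mathcal{P}_\alpha^j)^{-1}(\mathcal{I}_{\alpha,i}) \big)\Big)
\]
for $\alpha=\lambda_n^{\eps}\in(3,5)$, invoking Theorem~\ref{thm4} for case~(II), Lemma~\ref{lmI} for cases~(I) and~(III), and Lemma~\ref{pro41r}(b) together with \eqref{eqpalpha} for the covering itself. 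Your reframing of $v=u^{\eps}\circ F_w$ as a fresh $u^{\tilde\eps}$ with $\tilde n=1$ is a clean way to access Lemma~\ref{lmI}; the paper instead applies that lemma directly to $u^{\eps}$ at level $n'=n-1$, but the two are equivalent.
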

\begin{proof}
Write $\alpha=\lambda_{n}^{\eps}\in(3,5)$. Noticing $5^{-n}\lambda^{\eps}<\lambda_1^D$, by Theorem \ref{thm4}, for each $i\in S$, $\#\mathrm{Extr}(u^{\eps}\circ F_{wi})=1$ if and only if $\tau(u^{\eps}\circ F_{wi})\in\mathcal{D}_\alpha$, otherwise $\#\mathrm{Extr}(u^{\eps}\circ F_{wi})=0$. And by Lemma \ref{lmI}, for each $ij\in S_1$, $\{F_wp_{ij}\}$ is an extreme set if and only if $\tau(u^{\eps}\circ F_{wi})\in\mathcal{I}_{\alpha,j}$ (and meanwhile $\tau(u^{\eps}\circ F_{wj})\in\mathcal{I}_{\alpha,i}$). 

Using Lemma \ref{pro41r}-(b) with (\ref{eqpalpha}), considering $(\mathcal{P}_\alpha^i)^{-1}(\mathcal{D}_\alpha)$ for each $i\in S$, and the pair $(\mathcal{P}_\alpha^i)^{-1}(\mathcal{I}_{\alpha,j})$, $(\mathcal{P}_\alpha^j)^{-1}(\mathcal{I}_{\alpha,i})$ for each $ij\in S_1$, we can find that
\begin{equation*}
\mathbb{R}\mathrm{P}^2\setminus\{\boldsymbol{\theta}\}=\Big(\bigcup_{i\in S}(\mathcal{P}_\alpha^i)^{-1}(\mathcal{D}_\alpha)\Big)\cup\Big(\bigcup_{ij\in S_1}\big((\mathcal{P}_\alpha^i)^{-1}(\mathcal{I}_{\alpha,j})\cup (\mathcal{P}_\alpha^j)^{-1}(\mathcal{I}_{\alpha,i}) \big)\Big),
\end{equation*}
giving $\#\mathrm{Extr}(u^{\eps}\circ F_w)\geq 1$ for $\tau(u^{\eps}\circ F_w)\neq\boldsymbol{\theta}$.
For $\tau(u^{\eps}\circ F_w)=\boldsymbol{\theta}$, Lemma \ref{lmI} gives $\#\mathrm{Extr}(u^{\eps}\circ F_w)=1$.

The lemma follows. 
\end{proof}

\begin{proof}[Proof of Theorem \ref{thm5}]
From Lemma \ref{lmpuzzle}, $\#\mathrm{Extr}(u^{\eps}\circ F_w)\geq 1$ for each $w\in W_{n-1}$; and from Theorem \ref{thm4}, $\#\mathrm{Extr}(u^{\eps}\circ F_w)\leq 1$ for each $w\in W_{n}$. So by Lemma \ref{pro1}-(b) we have 
\begin{equation*}
3^{n-1}\leq \#\mathrm{Extr}(u^{\eps})\leq 3^n+\#(V_n\setminus V_0)\leq 4\cdot 3^n.
\end{equation*}
On the other hand, $\lambda^{\eps}=5^{n}\psi(\lambda_{n}^{\eps})$ with $\lambda_{n}^{\eps}\in(3,5)$ gives $5^{n}\psi(3)<\lambda^{\eps}<5^{n}\psi(5)$. This together with the above estimate yields
\begin{equation*}
\frac{1}{3}(\psi(5))^{-d_S/2}(\lambda^{\eps})^{d_S/2}<\#\mathrm{Extr}(u^{\eps})<4(\psi(3))^{-d_S/2}(\lambda^{\eps})^{d_S/2},
\end{equation*}
which is (\ref{equn}).
\end{proof}

\section{Proof of Theorem \ref{thm1}}\label{sec6}

We now proceed to prove Theorem \ref{thm1} using Theorems \ref{thm4} and \ref{thm5}.

For $u_\lambda\in E_D(\lambda)\setminus\{0\}$ or $E_N(\lambda)\setminus\{0\}$  with $\mathrm{supp}\,u_\lambda=\mathcal{SG}$, recalling Propostion \ref{thm32}, let $m=m_0$ for the (D2), (D5), (N5) and (N6') cases; $m=m_0+1$ for the (D6) and (N6) cases.  Note that $m=1$ for the (D2) and (N6') cases; $m\geq 1$ for the (D5) case; $m\geq 2$ for the (N5) and (N6) cases; and $m\geq 3$ for the (D6) case. 

Note that  in the (N0) case, $\lambda=0$ and $u_\lambda$ is a constant function. For other $u_\lambda$, $\lambda_m\in(0,6)$, so for each $w\in W_m$, $u_\lambda\circ F_w$ is in the class of eigenfunctions considered in Theorem 3.5, say $u_\lambda\circ F_w=u^{\eps}$.
Moreover, by the spectral decimation (Proposition \ref{thm31}), $\lambda=\Psi(m,\eps,\beta)=5^{m+|\eps|}\psi\circ\varphi_\eps(\beta)$ with $\beta\in\{2,3,5\}$. Noticing that $\varphi_\eps=\varphi_\varnothing=\mathrm{id}$ for $|\eps|=0$ and $\varphi_\eps(\beta)\in(3,5)$ for $|\eps|\geq1$, we have
\begin{equation}\label{estilam}
5^{m+|\eps|}\psi(2)\leq\lambda\leq 5^{m+|\eps|}\psi(5).
\end{equation}

In the following two lemmas, we estimate $\#\mathrm{Extr}(u_\lambda)$ when $\eps=\varnothing$.

\begin{lemma}\label{lm61}
Let $u_\lambda\in E_D(\lambda)\setminus\{0\}$ or $E_N(\lambda)\setminus\{0\}$ with $\mathrm{supp}\,u_\lambda=\mathcal{SG}$. Suppose $\eps=\varnothing$, then

(a). for $u_\lambda$ in the (D2) case, $1\leq \#\mathrm{Extr}(u_\lambda)\leq 6$;

(b). for $u_\lambda$ in the (D5) or (N5) case, there exists a constant $c_5>1$ such that
\begin{equation}\label{esti5}
c_5^{-1}\lambda^{d_S/2}\leq \#\mathrm{Extr}(u_\lambda)\leq c_5\lambda^{d_S/2};
\end{equation}

(c). for $u_\lambda$ in the (D6') case, $\#\mathrm{Extr}(u_\lambda)=0$, where $u_\lambda$ is a $\lambda_2^N$-Neumann eigenfunction.
\end{lemma}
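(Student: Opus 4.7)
In all three parts, the \emph{upper bound} follows by applying Lemma~\ref{pro1}(b) with the partition $\Gamma=W_{m_0}$. For each $w\in W_{m_0}$, Proposition~\ref{pro01} gives $u\circ F_w\in E(\psi(\lambda_{m_0}))$; since $\lambda_{m_0}\in\{2,3,5\}$, Remark~\ref{re33}(1) shows $\psi(\lambda_{m_0})\leq\psi(5)<\psi(6)=\lambda_1^D$, so Theorem~\ref{thm4} forces $N(u\circ F_w)\leq 1$. Summing the $3^{m_0}$ contributions and adding $\#(V_{m_0}\setminus V_0)\leq 2\cdot 3^{m_0}$ junction points yields $N(u)\leq 3\cdot 3^{m_0}$; using $5^{d_S/2}=3$, this matches $\lambda^{d_S/2}$ up to a constant in case~(b) and specializes to $N(u)\leq 6$ in case~(a). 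The lower bound in~(a) is immediate: a non-trivial Dirichlet eigenfunction with $u|_{V_0}=0$ must attain its maximum or minimum at an interior point disjoint from $V_0$, so $N(u)\geq 1$.

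For the \emph{lower bound} in case~(b), I will refine to the partition $W_{m_0-1}$ (the case $m_0=1$ is handled as in~(a) applied to $u$ directly). For each $w\in W_{m_0-1}$, set $v=u\circ F_w\in E(5\psi(5))$. By Proposition~\ref{thm31}, $v$ has level of birth~$1$ with $\lambda_1=5$, so $-\Delta_1 v=5v$ on $V_1\setminus V_0$. Writing out this $3\times 3$ system at $p_{23},p_{31},p_{12}$ and taking pairwise differences forces $v(p_1)=v(p_2)=v(p_3)=:c_w$ --- a rigidity caused by $5$ being one of the exceptional values $\{2,5,6\}$ of the spectral decimation, and the decisive structural input of the lemma. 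Since $\mathrm{supp}\,u=\mathcal{SG}$ gives $v\not\equiv 0$ and $v$ has positive eigenvalue, $v$ cannot be the constant $c_w$; hence $v$ attains its maximum or minimum at some interior point whose value differs from $c_w$, and a connected component of the corresponding level set is then an extreme set of $v$ disjoint from $V_0$. Thus $N(v)\geq 1$, and Lemma~\ref{pro1}(b) yields $N(u)\geq 3^{m_0-1}\geq C^{-1}\lambda^{d_S/2}$.

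For case~(c), $\lambda_2^N=5\psi(3)$ and $\dim E_N(\lambda_2^N)=2$. The plan is to exploit the reflection symmetries of $\mathcal{SG}$, decomposing $E_N(\lambda_2^N)$ into one-dimensional subspaces on each of which $u$ is antisymmetric under an involution fixing one vertex of $V_0$ and swapping the other two. For a normalized basis element of such a subspace (e.g.\ $u(p_1)=0$, $u(p_2)=-u(p_3)$), a direct inductive computation using~\eqref{eqdeci} with $\lambda_1=3$ and $\lambda_m\in(0,2)$ for $m\geq 2$ will show that $u$ is strictly monotone along the antisymmetry axis and that $\max u$, $\min u$ are attained only at points of $V_0$. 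Consequently every local extreme set of $u$ meets $V_0$, which is excluded by Definition~\ref{def1}, yielding $N(u)=0$. The main obstacle throughout is the rigidity step in~(b): without the observation that the forbidden value $\lambda_1=5$ pins $v|_{V_0}$ to a constant vector, the per-cell lower bound $N(v)\geq 1$ would be unavailable and the bound $N(u)\geq C^{-1}\lambda^{d_S/2}$ would be significantly harder to prove.
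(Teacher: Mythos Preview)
Your arguments for parts (a) and (b) are essentially the paper's proof: the upper bound via Lemma~\ref{pro1}(b) and Theorem~\ref{thm4} on $m$-cells, and the lower bound in (b) via the rigidity $v(p_1)=v(p_2)=v(p_3)$ forced by $\lambda_{m_0}=5$ on each $(m_0-1)$-cell. This matches the paper exactly.

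Part (c), however, has two genuine gaps. First, $E_N(\lambda_2^N)$ is two-dimensional, and while it is spanned by reflection-antisymmetric functions, a generic $u$ in this space is \emph{not} antisymmetric under any single reflection. Since $N(\cdot)$ is not linear, proving $N(u)=0$ for each antisymmetric basis vector says nothing about $N(au_1+bu_2)$. Second, even for an antisymmetric $u$, the statement ``$\max u,\min u$ are attained only at points of $V_0$'' concerns \emph{global} extrema; it does not by itself exclude \emph{local} extreme sets in the interior, which is what $N(u)=0$ requires. (Also, along the axis of the reflection $u$ is identically zero, so ``strictly monotone along the antisymmetry axis'' is not what you want; presumably you mean along the opposite edge, but that alone is still insufficient.)

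The paper's route for (c) avoids both issues by working uniformly over the whole eigenspace: combining the Neumann condition with the discrete eigenvalue equation at level $1$ yields $u(p_1)+u(p_2)+u(p_3)=0$, i.e.\ $\tau(u)\in L_\infty$ for \emph{every} $u$ in the (N6$'$) case. Then Lemma~\ref{pro41r}(c) (the $\alpha=3$ case) shows that $L_\infty$ is disjoint from each $(\mathcal P_3^i)^{-1}(\mathcal D_3)$ and from each $(\mathcal P_3^i)^{-1}(\mathcal I_{3,j}\cup\{\boldsymbol\zeta_{3,jk}\})$, so Theorem~\ref{thm4} gives $N(u\circ F_i)=0$ for all $i$, and Lemma~\ref{lm4.8}(b) rules out any extreme set through $p_{ij}$. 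This handles all $u$ at once and addresses local, not just global, extrema.
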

\begin{proof}
Write $u=u_\lambda$. We consider $u\circ F_{w'}$ for each $w'\in W_{m-1}$. Since $-\Delta_mu|_{V_m}=\lambda_mu|_{V_m}$, by (\ref{defdeltam}) we have
\begin{equation}\label{eqlamm}
\begin{aligned}
(4-\lambda_m)u(p_{ij}^{w'})=u(p_{i}^{w'})+u(p_{j}^{w'})+u(p_{ik}^{w'})+u(p_{jk}^{w'})
\end{aligned}
\end{equation}
for distinct $i,j,k\in S$, where $p_{i}^{w'}=F_{w'}p_{i}$, $p_{ij}^{w'}=F_{w'}p_{ij}$.

(a). For $u$ in the (D2) case, $m=m_0=1$,  $\lambda_1=2$, $\lambda=5\psi(2)$ and $w'=\varnothing$. By the Dirichlet boundary condition, $u(p_1)=u(p_2)=u(p_3)=0$. Solving (\ref{eqlamm}) we have $u(p_{23})=u(p_{31})=u(p_{12})\neq 0$, which gives $\#\mathrm{Extr}(u)\geq 1$. Applying Theorem \ref{thm4} to $u\circ F_i$, we see that $\#\mathrm{Extr}(u\circ F_i)\leq 1$ for each $i\in S$. So by Lemma \ref{pro1}-(b), $1\leq \#\mathrm{Extr}(u)\leq 6$.

(b). For $u$ in the (D5) or (N5) case, $\lambda_m=5$ and $\lambda=5^m\psi(5)$.
Solving (\ref{eqlamm}) gives  $-(u(p_{23}^{w'})+u(p_{31}^{w'})+u(p_{12}^{w'}))=u(p_2^{w'})+u(p_3^{w'})=u(p_1^{w'})+u(p_3^{w'})=u(p_1^{w'})+u(p_2^{w'})$, forcing $u(p_1^{w'})=u(p_2^{w'})=u(p_3^{w'})$. This gives $\#\mathrm{Extr}(u\circ F_{w'})\geq 1$ for each $w'\in W_{m-1}$. On the other hand, Theorem \ref{thm4} gives $\#\mathrm{Extr}(u\circ F_w)\leq 1$ for each $w\in W_m$. So by Lemma \ref{pro1}-(b) and (\ref{estilam}) we have
\begin{equation*}
\frac{1}{3}\psi(5)^{-d_S/2}\lambda^{d_S/2}\leq 3^{m-1}\leq \#\mathrm{Extr}(u)\leq 3^m+3^{m+1}\leq 4\psi(2)^{-d_S/2}\lambda^{d_S/2}.
\end{equation*}

(c). For $u$ in the (N6') case, $m=m_0=1$, $\lambda_1=3$, $\lambda=\lambda_2^N=5\psi(3)$ and $w'=\varnothing$. By the Neumann boundary condition, applying Lemma \ref{pro43} to each $u\circ F_i,\ i\in S$, we have $u(p_i)=2(u(p_{ki})+u(p_{ij})))$ for distinct $i,j,k\in S$, and solving (\ref{eqlamm}) gives $u(p_{ij})=-\frac{1}{2}(u(p_i)+u(p_j)+2u(p_k))$. So we have $\sum_{i\in S}u(p_i)=4\sum_{ij\in S_1}u(p_{ij})=-8\sum_{i\in S}u(p_i)$, implying $\sum_{i\in S}u(p_i)=0$ and $\tau(u)\in L_\infty$, the line at infinity of $\mathbb{R}\mathrm{P}^2$.

Considering $(\mathcal{P}_{\lambda_1}^i)^{-1}(\mathcal{D}_{\lambda_1})$ for each $i\in S$, and the pair $(\mathcal{P}_{\lambda_1}^i)^{-1}\big(\mathcal{I}_{\lambda_1,j}\cup\{\boldsymbol{\zeta}_{\lambda_1,jk}\}\big)$, $(\mathcal{P}_{\lambda_1}^j)^{-1}\big(\mathcal{I}_{{\lambda_1},i}\cup\{\boldsymbol{\zeta}_{\lambda_1,ki}\}\big)$ for distinct $i,j,k\in S$, noticing $\lambda_1=3$, using Lemma \ref{pro41r}-(c) with (\ref{eqpalpha3}), we see that
\begin{equation*}
\begin{aligned}
\mathbb{R}\mathrm{P}^2\setminus L_\infty&=\Big(\bigcup_{i\in S}(\mathcal{P}_{\lambda_1}^i)^{-1}(\mathcal{D}_{\lambda_1})\Big)\cup\\
&\bigg(\bigcup_{ij\in S_1,k\in S\setminus\{i,j\}}\Big((\mathcal{P}_{\lambda_1}^i)^{-1}\big(\mathcal{I}_{\lambda_1,j}\cup\{\boldsymbol{\zeta}_{\lambda_1,jk}\}\big)\cup (\mathcal{P}_{\lambda_1}^j)^{-1}\big(\mathcal{I}_{{\lambda_1},i}\cup\{\boldsymbol{\zeta}_{\lambda_1,ki}\}\big) \Big)\bigg).
\end{aligned}
\end{equation*}
Since $\tau(u)\in L_\infty$, applying Theorem 3.4 to $u\circ F_i$ we have $\#\mathrm{Extr}(u\circ F_i)=0$ for each $i\in S$; and using Lemma \ref{lm4.8}-(b) for $p_{ij}$ gives that $p_{ij}$ is not in an extreme set of $u$ for each $ij\in S_1$. Therefore $\#\mathrm{Extr}(u)=0$.
\end{proof}

\begin{lemma}\label{lm62}
Let $u_\lambda\in E_D(\lambda)\setminus\{0\}$ or $E_N(\lambda)\setminus\{0\}$ with $\mathrm{supp}\,u_\lambda=\mathcal{SG}$. For $u_\lambda$ in the (D6) or (N6) case and $\eps=\varnothing$, we have
\begin{equation}\label{esti6}
c_6^{-1}\lambda^{d_S/2}\leq \#\mathrm{Extr}(u_\lambda)\leq c_6\lambda^{d_S/2}
\end{equation}
for some constant $c_6>1$.
\end{lemma}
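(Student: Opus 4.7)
The strategy parallels Lemma~\ref{lm61}-(b), but partitions $\mathcal{SG}$ at two different scales. Set $m=m_0+1$, so that $\lambda_m=3$, $\lambda=5^m\psi(3)$, and $\lambda^{d_S/2}=3^m\psi(3)^{d_S/2}$. For the upper bound, every $w\in W_m$ gives $u\circ F_w\in E(\psi(3))$ in the small-eigenvalue regime $\psi(3)<\lambda_1^D$, so Theorem~\ref{thm4} forces $N(u\circ F_w)\le 1$, and Lemma~\ref{pro1}-(b) yields
\[
N(u)\le 3^m+\#V_m\le 4\cdot 3^m\le 4\,\psi(3)^{-d_S/2}\lambda^{d_S/2}.
\]

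For the lower bound, I instead partition at level $m_0-1$, which is non-negative since $m_0\ge 1$ in both (D6) and (N6) (with $W_0=\{\varnothing\}$ when $m_0=1$). For $w'\in W_{m_0-1}$, set $v=u\circ F_{w'}\in E(25\psi(3))$. A direct inspection of the Dirichlet spectrum described in Proposition~\ref{thm32} shows $25\psi(3)\notin\Lambda^D$, so Lemma~\ref{thm2.5} gives $\dim E(25\psi(3))=L=3$, matching $\dim E_N(25\psi(3))$ from Proposition~\ref{thm32}-(N6) with $m_0=1,\eps=\varnothing$. Hence every such $v$ is automatically a Neumann eigenfunction with full support and local level-of-birth pattern $\lambda_1=6,\lambda_2=3$.

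The key step is to show $N(v)\ge 1$ for any such $v$. Parametrize $v|_{V_0}=(a_1,a_2,a_3)$ and put $S=a_1+a_2+a_3$. Applying Lemma~\ref{pro43} to the small-eigenvalue restriction $v\circ F_{ii}\in E(\psi(3))$ converts each global Neumann condition $(\mathrm{d}v)_{p_i}=0$ into a linear relation; combining these with the discrete Laplacian equation at level $1$ (with eigenvalue $6$) yields the closed formula $v(p_{ij})=a_k-S/2$ for distinct $i,j,k\in S$. In particular, each $v\circ F_i|_{V_0}$ has coordinate sum zero, so $\tau(v\circ F_i)\in L_\infty$; since the Neumann subspace of $E(5\psi(3))$ is $2$-dimensional by Proposition~\ref{thm32}-(N6') and coincides with the $2$-dimensional subspace $\{\tau\in L_\infty\}$, each $v\circ F_i$ is automatically Neumann of (N6') type, and Lemma~\ref{lm61}-(c) gives $N(v\circ F_i)=0$. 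Therefore every extreme set of $v$ must pass through $V_1\setminus V_0$.

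A brief case analysis on the sign of $S$ exhibits such an extreme set. If $S\ne 0$, then the identity $v(p_{ij})=a_k-S/2$ forces either the maximum or the minimum of $v|_{V_1}$ to be strictly attained at some $p_{ij}\in V_1\setminus V_0$ rather than on $V_0$, and the corresponding level-set component is disjoint from $V_0$ by the (N6') estimates applied to the two subcells meeting $p_{ij}$. If $S=0$ then $b_{ij}=a_k$, and choosing $k$ so that $a_k$ is an extremal entry of $(a_1,a_2,a_3)$ (the fully symmetric configuration being excluded because $S=0$ together with $v\not\equiv 0$ rules it out), Lemma~\ref{symfuncs}-(b) (applied to $v\circ F_l$ with $\lambda_1=3$, so $1-\Phi(\lambda_1)/2=-2$) describes the constant-value arcs created by the coincidences $b_{ij}=a_k$; a direct verification shows these arcs link only $V_0$ vertices and level-$1$ vertices other than $p_{ij}$, leaving the component of $v^{-1}(a_k)$ through $p_{ij}$ as an isolated singleton. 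Summing via Lemma~\ref{pro1}-(b) gives $N(u)\ge 3^{m_0-1}=\tfrac{1}{9}\psi(3)^{-d_S/2}\lambda^{d_S/2}$, and taking $c_6$ large enough produces (\ref{esti6}). The main obstacle is the borderline $S=0$ case, where neither the maximum nor the minimum of $v$ is strictly separated from $V_0$, and one must trace the constant-value arcs produced by Lemma~\ref{symfuncs}-(b) carefully to confirm that the candidate level-$1$ extremum remains isolated.
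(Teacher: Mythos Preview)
Your upper bound matches the paper exactly. For the lower bound you partition at the same level as the paper (your $m_0-1$ equals the paper's $m-2$) and derive the same identity $v(p_{ij})=a_k-S/2$, but you reach it by a detour through Neumann structure that is both unnecessary and not fully justified.

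\textbf{The detour is unnecessary.} The relation $b_{ij}=a_k-S/2$ follows immediately from the three level-$1$ discrete equations $(4-6)b_{ij}=a_i+a_j+b_{ki}+b_{jk}$, which hold because $\lambda_{m-1}=6$; no Neumann condition is needed. The paper solves this system directly (writing it as $a_i=-b_{ki}-b_{ij}$) and then does a short case analysis on the ordering of the three midpoint values $b_{ij}$ to locate an extreme set. Your route through ``$25\psi(3)\notin\Lambda^D$, hence $E(25\psi(3))=E_N(25\psi(3))$, hence each $v\circ F_i$ is (N6$'$) and has $N=0$'' is considerably longer and rests on the spectral non-coincidence $25\psi(3)\notin\Lambda^D$ (and implicitly $5\psi(3)\notin\Lambda^D$), which you assert by ``direct inspection'' but which is not established in the paper and is not entirely trivial.

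\textbf{The $S=0$ case has a genuine gap.} Your claim that the component of $v^{-1}(a_k)$ through $p_{ij}$ is an ``isolated singleton'' fails when two boundary values coincide at the chosen extremum. Take $S=0$ with $a_1=a_2>a_3$ (so $a_3=-2a_1$). Then $b_{23}=b_{31}=a_1$ and $b_{12}=a_3$, and Lemma~\ref{symfuncs}-(b) applied in each subcell produces a constant-value arc $\overline{p_1p_{31}}\cup\overline{p_{31}p_{23}}\cup\overline{p_{23}p_2}$ on which $v\equiv a_1$; thus the component through $p_{23}$ reaches $V_0$ and is \emph{not} an extreme set. The argument is rescued by switching to the opposite extremum: the component of $v^{-1}(a_3)$ through $p_{12}$ is separated from $p_3$ by $p_{23},p_{31}$ (values $a_1\neq a_3$), so it does give an extreme set. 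Your write-up does not handle this switch. The paper's direct case split on the ordered midpoint values avoids the issue entirely.
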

\begin{proof}
Write $u=u_\lambda$. For $u$ in the (D6) or (N6) case, we have $\lambda_m=3$, $\lambda_{m-1}=6$ and $\lambda=5^m\psi(3)$. Now we consider $u\circ F_{w'}$ for each $w'\in W_{m-2}$. Since $-\Delta_{m-1}u|_{V_{m-1}}=\lambda_{m-1}u|_{V_{m-1}}$, for distinct $i,j,k\in S$ we have
\begin{equation*}
\begin{aligned}
-2u(p_{ij}^{w'})=u(p_{i}^{w'})+u(p_{j}^{w'})+u(p_{ki}^{w'})+u(p_{jk}^{w'}),
\end{aligned}
\end{equation*}
and solving the above equations gives $u(p_i^{w'})=-u(p_{ki}^{w'})-u(p_{ij}^{w'})$. Without loss of generality, we assume $u(p_{23}^{w'})\leq u(p_{31}^{w'})\leq u(p_{12}^{w'})$, then 

(1). if $u(p_{23}^{w'})\geq 0$, noticing that $u(p_1^{w'})$, $u(p_2^{w'})$ and $u(p_3^{w'})$ are not all zero (nor are $u(p_{23}^{w'})$, $u(p_{31}^{w'})$ and $u(p_{12}^{w'})$), we have $u(p_{12}^{w'})>0$, and $u(p_i^{w'})\leq 0$ for each $i\in S$, giving an extreme set of $u$ inside $F_{w'}\mathcal{SG}$;

(2). if $u(p_{23}^{w'})<0\leq u(p_{31}^{w'})\leq u(p_{12}^{w'})$, we write $a=-u(p_{31}^{w'})/u(p_{23}^{w'})$, and $b=-u(p_{12}^{w'})/u(p_{23}^{w'})$, then $0\leq a\leq b$, $u(p_1^{w'})=-(a+b)\cdot(-u(p_{23}^{w'}))$, $u(p_2^{w'})=(1-b)\cdot(-u(p_{23}^{w'}))$ and $u(p_3^{w'})=(1-a)\cdot(-u(p_{23}^{w'}))$;

(2-1). if $1-b>-1$, then $u(p_{23}^{w'})$ is less than each of $u(p_{2}^{w'})$, $u(p_{3}^{w'})$, $u(p_{12}^{w'})$ and $u(p_{31}^{w'})$, giving an extreme set of $u$ inside $F_{w'2}\mathcal{SG}\cup F_{w'3}\mathcal{SG}$;

(2-2). if $1-b\leq-1$, then $u(p_{12}^{w'})$ is greater than each of $u(p_{1}^{w'})$, $u(p_{2}^{w'})$ and $u(p_{3}^{w'})$, giving an extreme set of $u$ inside $F_{w'}\mathcal{SG}$;

(3). other case follows symmetrically.

\noindent So $\#\mathrm{Extr}(u\circ F_{w'})\geq 1$ for each $w'\in W_{m-2}$. On the other hand, Theorem \ref{thm4} gives $\#\mathrm{Extr}(u\circ F_w)\leq 1$ for each $w\in W_m$. Therefore,
\begin{equation*}
\frac{1}{9}\psi(5)^{-d_S/2}\lambda^{d_S/2}\leq 3^{m-2}\leq \#\mathrm{Extr}(u)\leq 3^m+3^{m+1}\leq 4\psi(2)^{-d_S/2}\lambda^{d_S/2}.
\end{equation*}
\end{proof}

Now we come to the proof of Theorem \ref{thm1}.
\begin{proof}[Proof of Theorem \ref{thm1}]
Let $u=u_\lambda\in E_D(\lambda)\setminus\{0\}$ or $E_N(\lambda)\setminus\{0\}$ with $\mathrm{supp}\,u=\mathcal{SG}$. When $u$ is in the (N0) case, $\lambda=0$ and $\#\mathrm{Extr}(u)=0$ as $u$ 
is a constant function. So (\ref{equnthm1}) follows trivially. 

Now for each $w\in W_m$, we consider $u\circ F_w$, which is in $E(5^{-m}\lambda)\setminus\{0\}$. Since $u\circ F_w$ is in the class of eigenfunctions considered in Theorem \ref{thm5}, $u\circ F_w=u^\eps$ for some $\eps\in\{-1,1\}^n$ with $n\geq 0$ and $\eps_n=1$ if $n\geq 1$. 

When $|\eps|\geq1$, we have
\begin{equation*}
c^{-1} (5^{-m}\lambda)^{d_S/2}\leq \#\mathrm{Extr}(u\circ F_w)\leq c (5^{-m}\lambda)^{d_S/2},
\end{equation*}
where $c$ is the same constant in Theorem \ref{thm5}. Notice that from (\ref{estilam}), $3^{m+1}=(5^{m+1})^{d_S/2}\leq\psi(2)^{-d_S/2}\lambda^{d_S/2}$, then by Lemma \ref{pro1}-(b) and $\#W_m=3^m$, we obtain
\begin{equation*}
c^{-1}\lambda^{d_S/2}\leq \#\mathrm{Extr}(u)\leq (c+\psi(2)^{-d_S/2})\lambda^{d_S/2}.
\end{equation*}

When $\eps=\varnothing$, the desired estimate follows from Lemmas \ref{lm61} and \ref{lm62}.
\end{proof}




\subsection*{Conflicts of interest} The Authors declare that there is no conflict of interest.

\subsection*{Data availability statement.} Data sharing not applicable to this article as no datasets were generated or analysed during the current study.

\section*{Acknowledgement}
We are grateful to Professor Qingsong Gu for carefully examining the manuscript and for his benefical comments.


\begin{thebibliography}{99}

\bibitem{BK} M.T. Barlow and J. Kigami, Localized eigenfunctions of the Laplacian on p.c.f. self-similar sets, \textit{J. London Math. Soc. (2)}, 56 (1997), no. 2, 320-332.


\bibitem{CH} R. Courant, D. Hilbert, Methods of mathematical physics. Vol. I. \textit{Interscience Publishers, Inc., New York}, 1953. xv+561 pp.


\bibitem{DSV} K. Dalrymple, R.S. Strichartz and J.P. Vinson, Fractal differential equations on the Sierpinski gasket, \textit{J. Fourier Anal. Appl.},  5 (1999), no. 2-3, 203-284.

\bibitem{FS} M. Fukushima and T. Shima, On a spectral analysis for the Sierpinski gasket, \textit{Potential Anal.}, 1 (1992), no. 1, 1-35.


\bibitem{Hut} J.E. Hutchinson, Fractals and self-similarity, \textit{Indiana Univ. Math. J.}, 30 (1981), no. 5, 713-747.


\bibitem{Ho} L. Hörmander, The analysis of linear partial differential operators. III. Pseudodifferential operators. Grundlehren der mathematischen Wissenschaften [Fundamental Principles of Mathematical Sciences], 274. \textit{Springer-Verlag, Berlin}, 1985. viii+525 pp. 

\bibitem{JN} D. Jakobson and N. Nadirashvili, Eigenfunctions with few critical points, \textit{J. Differential Geom.}, 53 (1999), no. 1, 177–182.

\bibitem{Ka1} N. Kajino, Spectral asymptotics for Laplacians on self-similar sets, \textit{J. Funct. Anal.}, 258 (2010), no. 4, 1310-1360.

\bibitem{Ka2} N. Kajino, Log-periodic asymptotic expansion of the spectral partition function for self-similar sets, \textit{Comm. Math. Phys.}, 328 (2014), no. 3, 1341–1370. 

\bibitem{K1} J. Kigami, A harmonic calculus on the Sierpinski spaces, \textit{Japan J. Appl. Math.}, 6 (1989), no. 2, 259-290.

\bibitem{K2} J. Kigami, Harmonic calculus on p.c.f. self-similar sets, \textit{Trans. Amer. Math. Soc.}, 335 (1993), no. 2, 721-755.

\bibitem{K3} J. Kigami, Distributions of localized eigenvalues of Laplacians on post critically finite self-similar sets, \textit{J. Funct. Anal.}, 156 (1998), no. 1, 170-198.

\bibitem{K4} J. Kigami, Analysis on fractals. Cambridge Tracts in Mathematics, 143. \textit{Cambridge University Press, Cambridge}, 2001. viii+226 pp.

\bibitem{KL} J. Kigami and M.L. Lapidus, Weyl's problem for the spectral distribution of Laplacians on p.c.f. self-similar fractals, \textit{Comm. Math. Phys.}, 158 (1993), no. 1, 93-125.


\bibitem{M} L. Malozemov, The integrated density of states for the difference Laplacian on the modified Koch graph, \textit{Comm. Math. Phys.}, 156 (1993), no. 2, 387-397.



\bibitem{QT} H. Qiu and H. Tian, Restrictions of Laplacian eigenfunctions to edges in the Sierpinski gasket, \textit{Constr. Approx.}, 50 (2019), no. 2, 243–269.

\bibitem{RT} R. Rammal and G. Toulouse, Random walks on fractal structures and percolation clusters, \textit{J. Physique Letters}, 44 (1983), no. 1, 13-22.

\bibitem{Sh} T. Shima, On eigenvalue problems for the random walks on the Sierpinski pre-gaskets, \textit{Japan J. Indust. Appl. Math.}, 8 (1991), no. 1, 127-141.

\bibitem{Sh2} T. Shima, On eigenvalue problems for Laplacians on p.c.f. self-similar sets, \textit{Japan J. Indust. Appl. Math.}, 13 (1996), no. 1,  1-23.

\bibitem{St6} R.S. Strichartz, Differential Equations on Fractals. A Tutorial. \textit{Princeton University Press, Princeton, NJ}, 2006. xvi+169 pp.


\bibitem{T} A. Teplyaev, Spectral analysis on infinite Sierpinski gaskets, \textit{J. Funct. Anal.}, 159 (1998), no. 2, 537-567.

\bibitem{Y} S.T. Yau, A note on the distribution of critical points of eigenfunctions. \textit{Tsing Hua lectures on geometry and analysis (Hsinchu, 1990–1991)}, 315–317, \textit{Int. Press, Cambridge, MA}, 1997. 

\bibitem{Z1} S. Zelditch, Local and global analysis of eigenfunctions on Riemannian manifolds. \textit{Handbook of geometric analysis.} No. 1, 545–658, Adv. Lect. Math. (ALM), 7, \textit{Int. Press, Somerville, MA}, 2008.

\bibitem{Z2} S. Zelditch, Eigenfunctions of the Laplacian on a Riemannian manifold. CBMS Regional Conference Series in Mathematics, 125. \textit{Published for the Conference Board of the Mathematical Sciences, Washington, DC; by the American Mathematical Society, Providence, RI}, 2017. xiv+394 pp.

\end{thebibliography}
\end{document}